\providecommand{\U}[1]{\protect\rule{.1in}{.1in}}
\newtheorem{theorem}{Theorem}[section]
\newtheorem{corollary}[theorem]{Corollary}
\newtheorem{lemma}[theorem]{Lemma}
\newtheorem{proposition}[theorem]{Proposition}
\newtheorem{definition}[theorem]{Definition}
\newtheorem{remark}[theorem]{Remark}
\theoremstyle{definition}
\newtheorem{example}[theorem]{Example}
\let\epsilon\varepsilon
\providecommand{\tto}{\mathop{\rightrightarrows}\nolimits}
\newcommand{\traza}[2]{#1\big|_{#2}}
\newcommand{\R}{\mathbb{R}}
\newcommand{\N}{\mathbb{N}}
\renewcommand{\P}{\mathbb{P}}
\newcommand{\E}{\mathbb{E}}
\newcommand{\ind}{\mathds{1}}
\providecommand{\dom}{\mathop{\rm dom}\nolimits}
\providecommand{\gph}{\mathop{\rm gph}\nolimits}
\providecommand{\argmin}{\mathop{\rm argmin}\nolimits}
\providecommand{\argmax}{\mathop{\rm argmax}\nolimits}
\providecommand{\into}{\mathop{\rm int}\nolimits}
\providecommand{\ri}{\mathop{\rm ri}\nolimits}
\providecommand{\aff}{\mathop{\rm aff}\nolimits} 
\providecommand{\dim}{\mathop{\rm dim}\nolimits} 
\providecommand{\codim}{\mathop{\rm codim}\nolimits} 
\providecommand{\adim}{\mathop{\rm dim}\nolimits} 
\providecommand{\ext}{\mathop{\rm ext}\nolimits}
\providecommand{\ext}{\mathop{\rm ext}\nolimits}
\newcommand{\Sph}{\mathbb{S}}
\newcommand{\nx}{{n_x}}
\newcommand{\ny}{{n_y}}
\definecolor{forestgreenweb}{rgb}{0.13, 0.55, 0.13}
\newcommand{\DS}[1]{#1} 
\newcommand{\AS}[1]{#1} 
\newcommand{\GM}[1]{#1} 
\title{Exploiting the polyhedral geometry of stochastic linear bilevel programming\footnote{A short version of this article was published at the proceedings of IPCO 2023~\cite{MSS2023exploiting}. This extended version contains more detailed discussions, examples, results, and proofs.}}
\author{
	Gonzalo Muñoz \thanks{ Instituto de Ciencias de la Ingenier\'ia, Universidad de O'Higgins, Rancagua, Chile. {\tt gonzalo.munoz@uoh.cl} }
	\and 
	David Salas \thanks{Instituto de Ciencias de la Ingenier\'ia, Universidad de O'Higgins, Rancagua, Chile. {\tt david.salas@uoh.cl}}
	\and
	Anton Svensson \thanks{Instituto de Ciencias de la Ingenier\'ia, Universidad de O'Higgins, Rancagua, Chile. {\tt anton.svensson@uoh.cl}}
}
\date{}
\begin{document}
	
	\maketitle
	
	\begin{abstract}
		   We study linear bilevel programming problems whose lower-level objective is given by a random cost vector with known distribution. We consider the case where this distribution is nonatomic, allowing to reformulate the problem of the leader using the Bayesian approach in the sense of Salas and Svensson (2023), with a \DS{decision-dependent distribution that concentrates on the vertices of the feasible set of the follower's problem. We call this a vertex-supported belief}. 
   We prove that 
   this formulation is piecewise affine over the so-called chamber complex of the feasible set of the high-point relaxation. 
   We propose two algorithmic approaches to solve general problems enjoying this last property. The first one is based on enumerating the vertices of the chamber complex. \GM{This approach is not scalable, but we present it as a computational baseline and for its theoretical interest.}
   The second one is a Monte-Carlo approximation scheme based on the fact that randomly drawn points of the domain lie, with probability 1, in the interior of full-dimensional chambers, where the problem (restricted to this chamber) can be reduced to a linear program.
   Finally, we evaluate these methods through computational experiments showing both approaches' advantages and challenges.
	\end{abstract}
	
	\noindent\textbf{Keywords:} Bilevel programming; Bayesian approach; Chamber complex; Enumeration algorithm; Monte-Carlo algorithm.
\section{Introduction \label{sec:Intro}}

Stackelberg games, also referred to as bilevel
programming problems, were first introduced by H. von Stackelberg in \cite{Stackelberg1934market}. In this seminal work, an economic equilibrium problem between two firms was studied, under the particularity that one of them, the leader, is able to anticipate the decisions of the other one, the follower. 
Bilevel programming is  
an active field of research, and we refer the reader to the monographs \cite{dempe2002foundations,Dempe2015Bilevel} for comprehensive introductions, and to \cite{DempeZemkoho2020Advances} for recent developments.
In the last decade, researchers have started to consider uncertainty in Stackelberg games. A recent survey by Beck, Ljubi\'{c} and Schmidt \cite{beck2023survey} provides an overview of new questions and contributions on this topic.

One model that considers uncertainty in Stackelberg games is the Bayesian approach \cite{MallozziMorgan1996,SalasSvensson2020Existence}. Their starting point is that for any given leader's decision $x$, the leader only knows that the reaction $y$ of the follower is selected from a set $S(x)$, making $y$ a decision-dependent and uncertain 
\AS{variable}. The leader endows the set $S(x)$ with a probability distribution $\beta_x$ which models how the leader believes that the possible responses of the follower are distributed.

Uncertainty in the data of the lower-level has been considered by Claus for linear bilevel programming problems from a variational perspective considering risk measures (see the survey \cite{BurtscheidtClaus2020BilevelLinear} and the references therein, and the recent works \cite{Claus2021Continuity,Claus2022Existence}). In \cite{Ivanov2018RandomParameters}, Ivanov considered the cost function of the follower as a bilinear form $\langle Ax + \xi(\omega),y\rangle$, \DS{but with fixed feasible set for the follower. The method proposed to solve the problem was a Sample Average Approximation scheme, to build a deterministic bilevel program whose size depends on the number of samples.} 
Recently, in \cite{BuchheimHenkeIrmai2022Knapsack}, Buchheim, Henke and Irmai considered a bilevel version of the continuous knapsack problem with uncertainty on the follower's objective.

In this work, we consider a linear bilevel programming problem where the lower-level objective is uncertain for the leader but follows a prior known distribution (as the particular case studied in \cite{BuchheimHenkeIrmai2022Knapsack}). We study the problem from a 
Bayesian perspective \cite{SalasSvensson2020Existence}, and by means of the so-called \emph{chamber complex} of a  polytope (see Section \ref{sec:Chambers}), which subdivides the space of the leader's decisions in a meaningful way. The idea of using the chamber complex to understand geometrical properties of optimization 
problems under uncertainty is not new, but it is recent. To the best of our knowledge, the first work that does this is \cite{forcier2020polyhedral} (see also \cite{forcier2022thesis}), on which multistage stochastic linear optimization is studied. However, the techniques there cannot be 
extended to Stackelberg games, since the latter carries an intrinsical nonconvexity.

 
\subsection{Problem formulation}

Our study focuses on the setting of linear bilevel programming, which is the case where the data (objective functions and constraints) of the problem is linear \DS{and all variables are continuous}. More precisely, we aim to study the problem where the leader decides a vector $x\in\R^{\nx}$ that solves
\begin{equation}\label{eq:target-problem}
	\left\{\begin{array}{cl}
		\displaystyle\min_{x\in\R^{\nx}} & \langle d_1,x\rangle + \E[\langle d_2,y(x,\omega)\rangle]\vspace{0.2cm}\\
		s.t. & \DS{A_l x \leq b_l}\\
		&y(x,\omega)\text{ solves }\left\{\begin{array}{cl}
			\displaystyle\min_{y\in\R^{\ny}} & \langle c(\omega), y\rangle  \\
			s.t. & \begin{array}{l}
				\DS{A_fx + B_fy \leq b_f,} 
			\end{array}
		\end{array}\right.\quad\text{a.s. }\omega\in\Omega,
	\end{array}\right.
\end{equation}
where \DS{$A_l\in \R^{m_l\times \nx}$, $A_f\in \R^{m_f\times \nx}$, $B_f\in \R^{m_f\times \ny}$, $b_l\in \R^{m_l}$, $b_f\in \R^{m_f}$,} $d_1\in \R^{\nx}$, $d_2\in \R^{\ny}$, and $c:\Omega\to \Sph_{\ny}$ is a random vector over a probability space $(\Omega,\Sigma,\mathbb{P})$ with values in the unit sphere of $\R^{\ny}$. \DS{The subindexes $l$ and $f$ stand for leader and follower, respectively}. The notation carries the usual ambiguity of bilevel problems, which appears whenever the lower-level optimal response $y(x,\omega)$ is not uniquely determined for some $x$. However, we focus our attention here on costs whose distributions are nonatomic (see Section \ref{subsec:bayesianformulation}), which implies that with probability 1, $y(x,\omega)$ is unique for all \DS{feasible $x$}.

\DS{
\begin{remark} \label{rmk:coupling}In full generality, 
the upper-level constraint of a deterministic bilevel problem is of the form $A_lx + B_ly \leq b_l$. These constraints are known as \emph{coupling constraints}, and they introduce another level of difficulty to bilevel programming,  for example, by producing disconnected feasible regions (see, e.g., \cite{MershaDempe2006,AudetHaddadSavard2006}).  However, it was recently shown that for any linear bilevel problem with coupling constraints, there is a problem without coupling constraints with the same optimal solutions \cite{henke2024coupling}. For the stochastic setting we are considering, 
 one way to incorporate coupling constraints in Problem \eqref{eq:target-problem} could take the form of
	\begin{equation}\label{eq:CouplingConstraints}
		A_lx+B_ly(x,\omega) \leq b_l,\quad \text{ a.s. }\omega\in\Omega,
	\end{equation}
where coupling constraints must be satisfied for almost every follower's response.  It is unclear if a similar result to \cite{henke2024coupling} holds in the stochastic setting. In any case, in this first study, we prefer to keep constraints of the form \eqref{eq:CouplingConstraints} out of the scope of this work.
\end{remark}
}

\AS{To fix ideas, let us start with a warm-up example.
\begin{example}\label{ex:banana}
Let $\nx=2$ and $\ny=1$. For the follower, consider a random cost $c$ such that $\P(c=1)=\P(c=-1)=\frac{1}{2}$ and the constraints defined by the inequalities \(
x_1-y\leq 0,\; -x_1-y\leq 0,\; x_2+y\leq 1,\text{ and }-x_2+y\leq 1
\). For the leader, let the costs be $d_1=(2,1)$ and $d_2=4$, \DS{and consider no constraints. Then, Problem \eqref{eq:target-problem} is given by
\begin{equation}
	\left\{\begin{array}{cl}
		\displaystyle\min_{x\in\R^{2}} & 2x_1 + x_2 + \E[4y(x,\omega)]\vspace{0.2cm}\\
		s.t. 
		&y(x,\omega)\text{ solves }\left\{\begin{array}{cl}
			\displaystyle\min_{y\in\R} & c(\omega)y  \\
			s.t. & \begin{array}{rl}
				x_1-y&\leq 0,\\
				-x_1-y&\leq 0,\\
				x_2+y&\leq 1,\\
				-x_2+y&\leq 1.
			\end{array}
		\end{array}\right.\quad\text{a.s. }\omega\in\Omega,
	\end{array}\right.
\end{equation}
Observe that, given $x\in\R^2$, the feasible set of the follower can be written as
\begin{equation}
S(x) = \{ y\in\R\ :\ |x_1|\leq y \leq 1-|x_2|  \},
\end{equation}
and thus, the feasible set of the leader is given by all points for which the lower-level admits a feasible point, that is,
\begin{equation}
	X := \{ x\in \R^2\ :\ S(x) \neq \emptyset \} = \{  x\in \R^2\ :\ |x_1|+|x_2|\leq 1  \}.
\end{equation}
Finally, for any $x\in X$,} the solution $y(x,\omega)$ of the lower-level is either equal to $|x_1|$ with probability $\frac{1}{2}$ or equal to $1-|x_2|$ with probability $\frac{1}{2}$. So problem \eqref{eq:target-problem} can be rewritten as
\begin{equation}
    \left\{\begin{array}{cl}
		\displaystyle\min_{x\in\R^{2}} & 2x_1+x_2 + 4\left(\frac{1}{2}|x_1|+\frac{1}{2}(1-|x_2|) \right)\\
		s.t. & |x_1|+|x_2|\leq 1\\
	\end{array}
 \right.
\end{equation}
This problem is nonconvex and piecewise linear, and its optimal solution is $x=(0,1)$.\hfill$\Diamond$
\end{example}
}

\GM{
Problem \eqref{eq:target-problem} is computationally challenging. Its deterministic version is well-known to be strongly NP-Hard \cite{hansen1992new}, and even checking local optimality is NP-Hard \cite{vicente1994descent}. Recently, it was proven that linear bilevel optimization \AS{(both in its optimistic and pessimistic versions)} belongs to NP \cite{buchheim2023bilevel}.

For the stochastic version considered here, we are only aware of the hardness results in
\cite{BuchheimHenkeIrmai2022Knapsack}, who consider the special case of a continuous knapsack problem (see Section \ref{sec:continuousknapsack}); here, the authors prove that if the knapsack item values are independently and uniformly distributed, the resulting problem is \#P-Hard.
To the best of our knowledge, there is no stronger hardness for the general case of problem \eqref{eq:target-problem}.
On the other hand, the robust case (i.e., when the leader takes a worst-case perspective on the follower's cost vector) has even stronger complexity results \cite{buchheim2021complexity,buchheim2022robust}.
}

\subsection{Our contribution}

The contributions of this work can be summarized as follows. First, we reformulate problem \eqref{eq:target-problem} following the Bayesian approach \cite{SalasSvensson2020Existence} using a belief $\beta$ induced by the random cost $c(\omega)$, which we call \emph{vertex-supported belief induced by $c$}. This is done in Section \ref{sec:Pre}, along with some preliminaries. 

Secondly, we show that the objective function of problem \eqref{eq:target-problem} as well as its discretization using sample average approximation methods (see, e.g., \cite{HomemdeMelloBayraksan2014Survey}) have the property of being  \emph{piecewise linear over the chamber complex} of 
\DS{an appropriate polytope $D\subseteq \R^{\nx}\times \R^{\ny}$.}
That is, we show that for every polytope in the chamber complex of $D$, the objectives of the aforementioned problems are affine within that polytope. This is done in Section 3.

\AS{
In Section \ref{sec:Examples} we provide some illustrative examples that highlight some difficulties of working with the chamber complex of a polytope, hidden in their implicit definition.}

Section \ref{sec:Algorithms} and Section \ref{sec:Monte-Carlo} are devoted to propose two methods to solve piecewise linear problems over the chamber complex of a polytope. The first proposal, in Section \ref{sec:Algorithms}, is a deterministic algorithm that is based on straightforward observation: the optimal solution of such a problem must be attained at a vertex of the chamber complex. Thus, we provide a MIP formulation to enumerate all the vertices of the chamber complex. \DS{ The proposed algorithm is not scalable since the \AS{MIPs to be solved are of the size of the Face Lattice of $D$ and the number of vertex chambers could be exponential (see e.g. Example \ref{ex:expchambervertices})}. However, we include it due to its theoretical interest in understanding the combinatorial nature of Problem \eqref{eq:target-problem}. } \GM{It also serves as an exact computational baseline for inexact algorithms.} The second proposal, in Section \ref{sec:Monte-Carlo}, is a Monte-Carlo algorithm that is based on another very simple observation: if a point $x$ is drawn randomly, it will belong to the interior of a full-dimensional chamber with probability 1. \DS{It is worth to mention that either of the two methods rely on samples of the cost function $c(\omega)$, instead of evaluating exactly the objective function of the leader. This yields that the size of samples used to estimate the cost function only affects the complexity of the proposed methods by a linear factor.} 

We finish our work by testing both solution methods over numerical experiments. This is done in Section \ref{sec:Numerical}.

\section{Preliminaries \label{sec:Pre}}

For an integer $n\in\N$, we write $[n]:=\{1,\ldots,n\}$. Throughout this work we consider Euclidean spaces $\R^{n}$ endowed with the 
\AS{Euclidean} norm $\|\cdot\|$ and their inner product $\langle \cdot,\cdot\rangle$. We denote by  
$\Sph_n$  
the unit sphere in $\R^{n}$. 
For a set $A\subseteq \R^{n}$, we will write $\into(A)$, $\overline{A}$, $\aff(A)$, to denote its interior, closure and affine hull, respectively. We denote by $\ri(A)$ its relative interior, which is its interior with respect to the affine space $\aff(A)$. We denote the affine dimension of $A$ as $\adim(A)$, which corresponds to the dimension of $\aff(A)$. It is well known that if $A$ is nonempty and convex, then $\ri(A)$ is nonempty and $\overline{\ri(A)} = \overline{A}$. For a convex set $A$ and a point $x\in A$, we write $N_A(x)$ to denote the normal cone of $A$ at $x$, i.e.,
\begin{equation}\label{eq:NormalCone}
	N_A(x) := \{ d\in \R^{n}\ :\ \langle d,y-x\rangle \leq 0\quad \forall y\in A \}.
\end{equation}
We write $\ind_A$ to denote the indicator function of a set $A\subseteq\R^n$, having value $1$ on $A$ and $0$ elsewhere.

For a function $f:A\subseteq \R^{n}\to \R$ and $\alpha\in \R$, we write $[f\leq \alpha]$, $[f\geq \alpha]$ and $[f = \alpha]$ to denote the $\alpha$-sublevel set, the $\alpha$-superlevel set, and the $\alpha$-level set, respectively. 

A face $F$ of a polyhedron $P$ is the argmax of a linear function over $P$. The collection of all faces of $P$, that we denote by $\mathscr{F}(P)$, is known as the \textit{Face Lattice} of $P$. For any $k\in \{0,\ldots,n\}$ we will write
\begin{align}
	\mathscr{F}_{k}(P) &:= \{ F\in\mathscr{F}(P)\ :\ \adim(F) = k \}\label{eq:FixedDimension-Faces},\\
	\mathscr{F}_{\leq k}(P) &:=\bigcup_{j=0}^k\mathscr{F}_{j}(P) =  \{ F\in\mathscr{F}(P)\ :\ \adim(F) \leq k \}.\label{eq:LowerDimension-Faces}
\end{align}
The zero-dimensional faces are the vertices and we write $\ext(P):=\mathscr{F}_0(P)$.

In what follows, concerning problem \eqref{eq:target-problem}, we use $\nx$ and $\ny$ to denote the number of variables of the leader and the follower, respectively. Similarly, we use $m$ for the number of constraints in the follower's problem. 
Finally, given two nonempty sets $X\subseteq \R^{\nx}$ and $Y\subseteq \R^{\ny}$, we write $M:X \tto Y$ to denote a set-valued map, i.e., a function assigning to each $x\in X$ a (possibly empty) set $M(x)$ in $Y$. The graph of $M$ is the set 
\(
    \gph M=\{(x,y)\,:\, y\in M(x)\},
\)
and the domain of $M$ is the set $\dom M:=\{x\,:\,M(x)\neq\emptyset\}$.

\subsection{Specific notation}
\DS{
Motivated by the structure of problem \eqref{eq:target-problem}, we define $m=m_l+m_f$ and
\[
A = \begin{bmatrix}
	A_l\\ A_f
\end{bmatrix}\in\R^{m\times n_x},\quad B = \begin{bmatrix}
0\\ B_f
\end{bmatrix}\in\R^{m\times n_y},\quad\text{ and }\quad b=\begin{bmatrix}
b_l\\ b_f
\end{bmatrix}\in\R^{m}.
\]
With this, we define the polyhedron $D$ as 
\begin{equation}\label{eq:HighPoint-Relax}
	D := \left\{ (x,y) \in \R^{\nx}\times \R^{\ny}\ :\ Ax + By \leq b   \right\},
\end{equation}
which is the feasible set of the high-point relaxation of Problem \eqref{eq:target-problem} (see, e.g., \cite{kleinert2021survey}). We then consider the \textit{feasible set $X$ of the leader} as all points $x\in\R^{\nx}$ verifying the leader's constraints and admitting at least one feasible point for the follower. That is,
\begin{equation}\label{eq:DomainX}
    X  := \big\{ x\in \R^{\nx}\ :\ \exists y\in \R^{\ny}\text{ such that }Ax + By \leq b \big\}.
\end{equation}
}
It will be assumed along the paper that $D$ is full-dimensional. We do not \DS{lose} generality since it is always possible to embed $D$ into $\R^{\adim(D)}$. 
We will also assume that $D$ is compact, i.e. it is a polytope.
Similarly, we define the ambient space for the follower's decision vector as
	\begin{equation}\label{eq:DomainY}
			Y := \big\{ y\in \R^{\ny}\ :\ \exists x\in \R^{\nx}\text{ such that }Ax + By \leq b \big\}.
		\end{equation}
Both $X$ and $Y$ are polytopes (bounded polyhedra) as $D$ is assumed so.  We write $\pi:\R^{\nx}\times \R^{\ny}\to \R^{\nx}$ to denote the parallel projection given by $\pi(x,y) = x$. In particular, equation \eqref{eq:DomainX} can be written as $X = \pi(D)$. We consider the set-valued map $S:X\tto \R^{\ny}$ defined as 
	\begin{equation}\label{eq:sliceMap}
		S(x) := \{y \in \R^{\ny}\ :\ (x,y) \in D\}.
	\end{equation}
We call $S$ the \emph{fiber map} of $D$ (through the projection $\pi$). Clearly, $S$ has nonempty convex and compact values. 

For each $i\in [m]$ we define the function $g_i: \R^{\nx}\times \R^{\ny} \to \R$ given by
\begin{equation}\label{eq:gi-constraint}
	g_i(x,y) := A_{i}x + B_{i} y - b_i,
\end{equation}
where $A_i$ and $B_i$ denote the $i$th row of $A$ and $B$, respectively. Thus, we can write 
\(
D = \bigcap_{i\in[m]}[g_i\leq 0].
\)
Since every function $g_i$ is affine, we will simply write $\nabla g_i$ to denote their gradients at any point. For any subset $J\subseteq [m]$, we define the function $g_J: \R^{\nx}\times \R^{\ny} \to \R$ given by
\begin{equation}\label{eq:gJ-sumconstraint}
	g_J(x,y) := \sum_{j\in J} g_j(x,y).
\end{equation}
For simplicity we write $\mathscr{F} := \mathscr{F}(D)$ and  for each $n\in\{ 0,1,\ldots,\nx+\ny\}$, $\mathscr{F}_n := \mathscr{F}_n(D)$ and $\mathscr{F}_{\leq n} := \mathscr{F}_{\leq n}(D)$. Note that, for each face $F\in \mathscr{F}_n$ we can select a set $J\subseteq [m]$ of cardinality $|J| = (\nx+\ny) - n$ such that
\begin{equation}\label{eq:FacesToSets}
	F = D\cap [g_J = 0].
\end{equation}
Thus, for any face $F\in \mathscr{F}(D)$, we define $g_F:= g_J$, where $J$ is the (selected) subset of $[m]$ verifying \eqref{eq:FacesToSets}. Observe that since $|J|$ coincides with the codimension of $F$, the set $\{\nabla g_j\ :\ j\in J\}$ must be linearly independent. 

\subsection{Vertex-supported beliefs and Bayesian formulation}
\label{subsec:bayesianformulation}

In what follows, we write $\mathcal{B}(Y)$ to denote the Borel $\sigma$-algebra of $Y$ and $\mathscr{P}(Y)$ to denote the family of all (Borel) probability measures on $Y$.

Recall from \cite{SalasSvensson2020Existence} that for a set-valued map $M:X\tto Y$ with closed and nonempty values, a map $\beta:X\to \mathscr{P}(Y)$ is said to be \emph{a belief over $M$} if for every $x\in X$, the measure $\beta(x) = \beta_x$ concentrates over $M(x)$, i.e., $\beta_x(M(x)) = 1$.  

Let $(\Omega,\Sigma,\P)$ be a probability space. To model the cost function of the lower-level problem in \eqref{eq:target-problem}, we will consider only random vectors $c:\Omega\to \Sph_{\ny}$ with nonatomic distributions, in the sense that
\begin{equation}\label{eq:NonatomicDistribution}
	\forall O\in \mathcal{B}(\Sph_{\ny}):\quad \mathcal{H}^{\ny-1}(O) = 0 \implies \P[c(\omega)\in O] = 0,
\end{equation}
where $\mathcal{H}^{\AS{\ny}-1}$ denotes the $(\ny-1)$-Hausdorff measure over $(\Sph_{\nx}, \mathcal{B}(\Sph_{\ny}))$. In other words, the probability measure $\P\circ c^{-1}$ is absolutely continuous with respect to $\mathcal{H}^{\ny-1}$. Note that with this definition, any random vector $c:\Omega\to \R^{\ny}$ having an absolutely continuous distribution with respect to the Lebesgue measure $\mathcal{L}^{\ny}$ induces an equivalent random vector $\bar{c}:\Omega\to \Sph_{\ny}$ given by $\bar{c}(\omega) := \frac{c(\omega)}{\|c(\omega)\|}$. This new random vector is well-defined almost surely in $\Omega$, except for the negligible set $N = c^{-1}(0)$, and using $c(\cdot)$ or $\bar{c}(\cdot)$ in problem \eqref{eq:target-problem} is equivalent.

Now, to understand the distribution of the optimal response $y(x,\omega)$ induced by the random vector $c:\Omega\to \Sph_{\ny}$, we observe that its optimality\AS{ with regards to the lower-level problem} is characterized by $-c(\omega)\in N_{S(x)}(y(x,\omega))$. Thus, we consider the belief $\beta:X\to \mathscr{P}(Y)$ over the fiber map $S:X\tto Y$ given by the density
\begin{equation}\label{eq:VertexSupportedDensity}
	d\beta_x(y) := p_c(x,y) := \P[  -c(\omega) \in N_{S(x)}(y) ]. 
\end{equation}
Note that  $\P[  -c(\omega) \in N_{S(x)}(y) ] =  \P[  -c(\omega) \in \into(N_{S(x)}(y))\cap \Sph_{\ny} ]$ for any $(x,y)\in D$, and that $\into(N_{S(x)}(y))$ is nonempty if and only if $y$ is an extreme point of $S(x)$. Therefore, one can easily deduce that for each $x\in X$, the function $p_{c}(x,\cdot)$ is a discrete density with its support contained in $\ext(S(x))$ and the belief $\beta$ can be written as
\begin{equation}\label{eq:VertexSupportedBelief}
	\beta_x(O) = \sum_{y\in \ext(S(x))} p_c(x,y)\ind_O(y),\quad\forall O\in\mathcal{B}(Y).
\end{equation}
We call $\beta$ the \emph{vertex-supported belief induced by $c$}. With this construction, we can rewrite problem \eqref{eq:target-problem} as
\begin{equation}\label{eq:target-problem-Bayesian}
		\min_{x\in X}\quad \theta(x):=\,\, \langle d_1,x\rangle + \E_{\beta_x}[\langle d_2,\AS{\cdot}\rangle]\\
\end{equation}
where $\E_{\beta_x}[\langle d_2,\AS{\cdot} \rangle] = \sum_{y\in \ext(S(x))} \langle d_2,y\rangle p_c(x,y)$. Our goal in this work is to study problem \eqref{eq:target-problem} by profiting from the Bayesian formulation \eqref{eq:target-problem-Bayesian}, in the sense of \cite{SalasSvensson2020Existence}.
\begin{remark}
	Note that, by defining the centroid of $S(x)$ with respect to $\beta_x$ as
	\begin{equation}
		\mathfrak{b}(x) := \E_{\beta_x}[y] =  \sum_{y\in \ext(S(x))} p_c(x,y) y,
	\end{equation}
	we get that $\theta(x) = \langle d_1,x\rangle + \langle d_2,\mathfrak{b}(x)\rangle$. Thus, the Bayesian formulation \eqref{eq:target-problem-Bayesian} can be rewritten in terms of the centroid, which is the convex combination of the vertices of $S(x)$, proportionally to the weight of their normal cones. Of course, this is valid since the expected value $\E_{\beta_x}[\langle d_2,y\rangle]$ has a linear integrand.
\end{remark}


\subsection{The chamber complex}

We start by recalling the definition of polyhedral complex and 
chamber complex, a well-known concepts in 
computational geometry (see, e.g.,  \cite{de2010triangulations}).
	\begin{definition}[Polyhedral complex]\label{def:polycomplex} A set of polyhedra $\mathscr{P}$ in $ \R^{\nx}$ is a polyhedral complex if
		\begin{enumerate}
			\item For each $P,Q\in \mathscr{P}$, $P\cap Q$ is a face of $Q$ and of $P$.
			\item For each $P\in \mathscr{P}$ and each face $F$ of $P$, $F\in\mathscr{P}$. 
		\end{enumerate}
	\end{definition}
Given $P\in \mathscr{P}$, each vertex of $P$ (as a face of $P$) belongs to $\mathscr{P}$ and moreover, these are the minimal elements of $\mathscr{P}$. If the maximal elements in $\mathscr{P}$ have the same dimension, we say that $\mathscr{P}$ is a pure polyhedral complex. We are interested in a class of polyhedral complexes that are pure, the so-called chamber complexes, which we recall next.

\begin{definition}[Chamber complex]\label{def:Chambers} Let $D\subseteq \R^{\nx}\times\R^{\ny}$ be a polyhedron as described in \eqref{eq:HighPoint-Relax}. For each $x\in X=\pi(D)$, we define the chamber of $x$ as
	\begin{equation}\label{eq:ChamberOfAPoint}
		\sigma(x) := \bigcap\big\{ \pi(F)\, :\, F\in \mathscr{F}(D),\, x\in \pi(F) \big\}. 
	\end{equation}
	The \textbf{chamber complex} is the (finite) collection of chambers, i.e.,
	\begin{equation}\label{eq:ChamberComplex}
		\mathscr{C}(D) := \{ \sigma(x) \, :\, x\in X \}.
	\end{equation}
\end{definition}

The chamber complex is a polyhedral complex and so $\{\ri(C):C\in\mathscr{C}(D)\}$ is a partition of $X$.
The next proposition provides additional facts about the chamber of a point.

\begin{proposition}\label{prop:RequiredFaces} For any $x\in X$, one has that
	\begin{enumerate}
		\item $\sigma(x) = \bigcap\{ \pi(F)\ :\ F\in \mathscr{F}_{\leq \nx},\, x\in \pi(F) \}$.
		\item For any $C\in\mathscr{C}(D)$,  $x\in \ri(C)$ if and only if $C=\sigma(x)$.
	\end{enumerate}
\end{proposition}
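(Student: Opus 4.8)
The plan is to prove the two assertions separately, the crux being a single lemma used for the second: that $x\in\ri(\sigma(x))$ for every $x\in X$. For the first assertion, the inclusion $\sigma(x)\subseteq\bigcap\{\pi(F):F\in\mathscr{F}_{\leq\nx},\,x\in\pi(F)\}$ is immediate, since the right-hand side intersects over a subfamily of the faces defining $\sigma(x)$. For the reverse inclusion it suffices to show that every face $F$ with $x\in\pi(F)$ contains a face $G\in\mathscr{F}_{\leq\nx}$ with $x\in\pi(G)$ (then $\pi(G)\subseteq\pi(F)$ and $\pi(G)$ already appears in the smaller intersection, so the smaller intersection is contained in $\pi(F)$). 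I would build $G$ as follows: since $x\in\pi(F)$, the polytope $F\cap\pi^{-1}(x)$ is nonempty; pick a vertex $v=(x,y^{*})$ of it and let $G$ be the minimal face of $D$ containing $v$, so that $v\in\ri(G)$ and $G\subseteq F$. A short extreme-point argument shows $G\cap\pi^{-1}(x)=\{v\}$: any second point of this intersection would, together with a point obtained by pushing slightly past $v$ inside $\ri(G)$, exhibit $v$ as a non-extreme point of $F\cap\pi^{-1}(x)$. Consequently $\aff(G)\cap\pi^{-1}(x)=\{v\}$, which forces the kernel of $\pi$ to meet the direction space of $\aff(G)$ only in $0$; hence $\pi$ is injective on $\aff(G)$ and $\adim(G)=\adim(\pi(G))\leq\nx$, i.e.\ $G\in\mathscr{F}_{\leq\nx}$.

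For the second assertion the key step is to prove $x\in\ri(\sigma(x))$. First I would reduce the defining intersection to faces whose projection contains $x$ in its relative interior, that is, show $\sigma(x)=\bigcap\{\pi(F):x\in\ri(\pi(F))\}$. This rests on the elementary fact that every face of a projected polytope $\pi(F)$ is itself the projection of a face of $F$ (pull back, through $\pi$, a supporting hyperplane of $\pi(F)$): if $x\in\pi(F)$ but $x\notin\ri(\pi(F))$, then $x$ lies in the relative interior of some proper face $\pi(F')$ of $\pi(F)$ with $\pi(F')\subseteq\pi(F)$, so $F$ may be replaced by $F'$ without enlarging the intersection. Once the intersection runs over sets whose relative interiors share the common point $x$, I would invoke the standard convex-analysis identity $\ri(\bigcap_{i}C_{i})=\bigcap_{i}\ri(C_{i})$, valid for finitely many convex sets having a common relative-interior point, to conclude $x\in\bigcap\ri(\pi(F))=\ri(\sigma(x))$.

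With this lemma, both implications are short. If $C=\sigma(x)$, then $x\in\ri(\sigma(x))=\ri(C)$ directly. Conversely, if $x\in\ri(C)$ with $C\in\mathscr{C}(D)$, then $x$ also lies in $\ri(\sigma(x))$ by the lemma; since $\{\ri(C'):C'\in\mathscr{C}(D)\}$ partitions $X$, the point $x$ belongs to exactly one such relative interior, forcing $\ri(C)=\ri(\sigma(x))$, and hence $C=\sigma(x)$ (both are closed, being polytopes, so equal relative interiors yield equal sets).

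The main obstacle I anticipate is the lemma $x\in\ri(\sigma(x))$, and specifically the reduction to faces with $x\in\ri(\pi(F))$: it requires the easy but essential structural fact that faces of $\pi(F)$ lift to faces of $D$, together with the care that the relative-interior-of-intersection identity applies precisely because all the reduced sets share $x$ in their relative interiors. The dimension count in the first assertion is routine once the extreme-point observation $G\cap\pi^{-1}(x)=\{v\}$ is in place; I expect that observation, rather than any later computation, to carry the real content there.
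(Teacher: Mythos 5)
Your proof is correct. For assertion~1 you follow essentially the paper's route: both arguments locate, inside an arbitrary face $F$ with $x\in\pi(F)$, a subface of dimension at most $\nx$ whose projection still contains $x$, by selecting a vertex of the fiber $F\cap\pi^{-1}(x)$; the paper performs the dimension count by exhibiting $\ny$ linearly independent gradients among the constraints active at that vertex, while you argue via injectivity of $\pi$ on the affine hull of the minimal face containing it --- the same idea in two dialects. For assertion~2 you genuinely diverge. The paper deduces the key fact $x\in\ri(\sigma(x))$ from the externally cited property that $\{\ri(C)\,:\,C\in\mathscr{C}(D)\}$ partitions $X$: it takes the chamber $\sigma(z)$ whose relative interior contains $x$, shows $\sigma(x)\subseteq\sigma(z)$, and derives a contradiction from properness of that inclusion using the polyhedral-complex structure. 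You instead prove $x\in\ri(\sigma(x))$ directly, by rewriting $\sigma(x)$ as an intersection of projected faces each containing $x$ in its relative interior (using that faces of $\pi(F)$ lift to faces of $F$, hence of $D$) and then applying the identity $\ri\bigl(\bigcap_{i}C_{i}\bigr)=\bigcap_{i}\ri(C_{i})$ for finitely many convex sets with a common relative-interior point. Your route is more self-contained for that lemma --- it needs the partition property only in the final uniqueness step, which both proofs share --- at the price of invoking the face-lifting fact and Rockafellar's relative-interior identity, which the paper's shorter contradiction argument avoids.
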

\begin{proof} $1.$ Let us consider $F\in\mathscr{F}$ with $x\in \pi(F)$ such that $\adim(F) = n > \nx$. Then, there exists $J\subseteq [m]$ with $|J| = (\nx+\ny)-n < \ny$ such that $F = D\cap [g_J=0]$. Now, choose $y\in S(x)$ such that the point $(x,y)\in F$ is a vertex of $\{x\}\times S(x)$. Let $\hat{J}\subseteq [m]$ be the set of all active constraints at $(x,y)$. Then, $J\subseteq \hat{J}$ and 
	$\{(x,y)\} = [g_{\hat{J}}(x,\cdot) = 0]$. This implies that 
	\[
	\{ \nabla g_j\ :\ j\in\hat{J}\}\cup \{ (e_k,0) \in \R^{\nx}\times \R^{\ny}\ :\ k\in [\nx] \},
	\]
	has $\nx+\ny$ linearly independent vectors. In particular, $\{ \nabla g_j\ :\ j\in\hat{J}\}$ has at least $\ny$ linearly independent vectors, which yields that $\hat{F} = D\cap[g_{\hat{J}}= 0]$ satisfies that $\adim(\hat{F})\leq \nx$. Then, since $J\subseteq \hat{J}$, we have that $\hat{F} \subseteq F$, which yields that $\pi(\hat{F})\subseteq\pi(F)$. The arbitrariness of $F$ allows us to write
	\begin{align*}
		\sigma(x) = \bigcap\{ \pi(F)\ :\ F\in \mathscr{F},\, x\in \pi(F)\}
		\supseteq \bigcap\{ \pi(F)\ :\ F\in \mathscr{F}_{\leq\nx},\, x\in \pi(F) \},
	\end{align*}
	finishing the proof of the first assertion, since the reverse inclusion is direct. 
	
	2. Assume first that $C=\sigma(x)$. Since $\{\ri(C')\ :\ C'\in \mathscr{C}(D)\}$ is a partition of $X$, we know there exists $z\in X$ such that $x\in\ri(\sigma(z))$. Assume by contradiction that $x\notin\ri(C)$ which implies that $z\neq x$. Then clearly $\sigma(x)\subseteq \sigma(z)$ and so $\sigma(x)$ is a proper face of $\sigma(z)$, so that $\sigma(x)\,\cap\,\ri(\sigma(z))=\emptyset$. But this implies that $x\notin \sigma(x)$, which is evidently a contradiction.
	
	Conversely assume that $x\in\ri(C)$. Since $x\in\ri(\sigma(x))$ we get that $\ri(\sigma(x))\cap \ri(C)\neq \emptyset$, and recalling again that $\{\ri(C')\ :\ C'\in \mathscr{C}(D)\}$ is a partition of $X$ we conclude that $C=\sigma(x)$. 
\qed	
\end{proof}

In what follows, it will be useful to distinguish the maximal and the minimal chambers, with respect to the inclusion order. Minimal chambers are the zero-dimensional chambers, while maximal chambers coincide with those with nonempty interior, since $D$ has nonempty interior. The following definition introduces such distinction.

\begin{definition}\label{def:ZeroFullChambers} Let $D\subseteq \R^{\nx}\times\R^{\ny}$ be a polyhedron as described in \eqref{eq:HighPoint-Relax}. We define the families
	\begin{align}
		\mathscr{K}(D) &:= \left\{ K\in \mathscr{C}(D)\ :\ \into(K)\neq\emptyset \right\},\label{eq:FullDimensionalChambers}\\
		\mathscr{V}(D) &:= \left\{ v\in X\ :\ \{v\}\in \mathscr{C}(D) \right\}.\label{eq:ZeroDimensionalChambers}
	\end{align}
	We call $\mathscr{K}(D)$ the full-dimensional chambers of $\mathscr{C}(D)$, and $\mathscr{V}(D)$, the chamber vertices (or zero-dimensional chambers) of $\mathscr{C}(D)$.
\end{definition}

It is worth mentioning that $\mathscr{K}(D)$ is a covering of $X$, 
i.e., $X= \bigcup_{K\in \mathscr{K}(D)} K.$ This follows from the fact that each chamber is contained in a maximal (full-dimensional) chamber. 

\AS{
\begin{remark}\label{rem:ex1}
    Consider $D \subseteq \R^{2}\times\R$ the polytope defined as tuples $(x_1,x_2,y)$ such that 
\(
x_1-y\leq 0,\; -x_1-y\leq 0,\; \DS{x_2+y\leq 1},\text{ and }-x_2+y\leq 1
\); \GM{these are the same} constraints of the problem in Example \ref{ex:banana}.

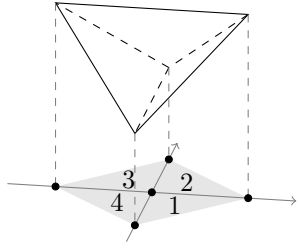
\begin{figure}[ht]
\centering
\tdplotsetmaincoords{110}{-10}
\begin{tikzpicture}[scale=1.3,
    tdplot_main_coords,
    axis/.style={->,gray},
    vector/.style={-stealth,very thick}, 
    vector guide/.style={dashed,red,thick},
    grid/.style={very thin,gray}
    ]
     \def\zshift{-1};

    \coordinate (p1) at (1,0,\zshift);
    \coordinate (p2) at (0,1,\zshift);
    \coordinate (p3) at (-1,0,\zshift);
    \coordinate (p4) at (0,-1,\zshift);
    \fill[color=black!10!white] (p1) -- (p2) -- (p3) -- (p4) -- cycle;
    \coordinate (v1) at (1,0,1);
    \coordinate (v2) at (0,1,0);
    \coordinate (v3) at (-1,0,1);
    \coordinate (v4) at (0,-1,0);
    \draw (v1) -- (v2) -- (v3) --cycle;
    \draw[dashed] (v1) -- (v4) -- (v2);
    \draw[dashed] (v4) -- (v3);

    \coordinate (O) at (0,0,\zshift);
    
    \draw[dashed,gray] (v1) -- (p1);
    \draw[dashed,gray] (v2) -- (p2);
    \draw[dashed,gray] (v3) -- (p3);
    \draw[dashed,gray] (v4) -- (p4);

    \draw[axis] (-1.5,0,\zshift) -- (1.5,0,\zshift);
    \draw[axis] (0,1.5,\zshift) -- (0,-1.5,\zshift);

    \filldraw (0,0,\zshift) circle (1pt);
    \filldraw (p1) circle (1pt);
    \filldraw (p2) circle (1pt);
    \filldraw (p3) circle (1pt);
    \filldraw (p4) circle (1pt);

    \node at (0.3,0.35,\zshift) {1};
    \node at (0.3,-0.35,\zshift) {2};
    \node at (-0.3,-0.35,\zshift) {3};
    \node at (-0.3,0.35,\zshift) {4};
    
\end{tikzpicture}
\caption{The polytope $D$ of Remark \ref{rem:ex1} and its chamber complex.}
\label{fig:banana}
\end{figure}

Clearly the vertices of $D$ are $v_1=(1,0,0)$, $v_2=(0,1,1)$, $v_3=(-1,0,0)$ and $v_4=(0,-1,1)$, and their projections $(1,0)$, $(-1,0)$, $(0,1)$ and $(0,-1)$ are chamber vertices. Additionally, $(0,0)$ is a vertex of the chamber complex $\mathscr{C}(D)$ since it is the intersection of the projections of the edges between $v_1$ and $v_3$ and between $v_2$ and $v_4$. 

In Figure \ref{fig:banana}, the 5 chamber vertices are depicted using dots while its 4 full-dimensional \DS{chambers} are enumerated 1 to 4. \hfill$\Diamond$
\end{remark}}

We close this preliminary section with the following direct proposition that relates minimal and maximal chambers.

\begin{proposition}\label{prop:MinimalChambersAreExtremePoints}
	Given $D$ as in \eqref{eq:HighPoint-Relax} with nonempty interior then
	$\mathscr{V}(D)$ coincides with $\bigcup_{K\in \mathscr{K}(D)} \ext(K)$.
\end{proposition}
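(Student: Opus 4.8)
The plan is to prove the asserted equality by establishing the two inclusions separately, relying throughout on two facts already recorded in the excerpt: that $\mathscr{C}(D)$ is a polyhedral complex (so both axioms of Definition \ref{def:polycomplex} are available), and that $\mathscr{K}(D)$ is a covering of $X$. Neither inclusion should require new geometry, only careful use of these structural properties together with the definitions of $\mathscr{V}(D)$ and $\ext$.

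For the inclusion $\bigcup_{K\in\mathscr{K}(D)}\ext(K)\subseteq\mathscr{V}(D)$, I would fix a full-dimensional chamber $K\in\mathscr{K}(D)$ and a vertex $v\in\ext(K)=\mathscr{F}_0(K)$. Since $\{v\}$ is then a zero-dimensional face of the complex member $K$, the second axiom of Definition \ref{def:polycomplex} (every face of a member is again a member) gives $\{v\}\in\mathscr{C}(D)$. By the definition of $\mathscr{V}(D)$ in \eqref{eq:ZeroDimensionalChambers}, this is precisely $v\in\mathscr{V}(D)$, which closes this direction.

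For the reverse inclusion, I would take $v\in\mathscr{V}(D)$, so that $\{v\}\in\mathscr{C}(D)$ and $v\in X$. Because $\mathscr{K}(D)$ covers $X$, there is some $K\in\mathscr{K}(D)$ with $v\in K$. Now I would apply the first axiom of Definition \ref{def:polycomplex} to the two members $\{v\}$ and $K$: their intersection must be a face of $K$. Since $v\in K$, that intersection equals $\{v\}$, so $\{v\}$ is a zero-dimensional face of $K$, i.e.\ $v\in\ext(K)$, giving $v\in\bigcup_{K\in\mathscr{K}(D)}\ext(K)$.

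The whole argument is essentially bookkeeping once the polyhedral-complex axioms are granted; the one spot needing mild care is the reverse inclusion, where one must first use the covering property to pin down a full-dimensional chamber containing $v$ and then invoke the intersection axiom to upgrade the statement ``$v$ lies in $K$'' to ``$v$ is a vertex of $K$.'' I do not expect a genuine obstacle beyond checking that a singleton intersection is indeed a zero-dimensional face, consistent with the identification $\ext(K)=\mathscr{F}_0(K)$.
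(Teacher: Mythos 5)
Your proof is correct and follows essentially the same route as the paper's: the forward inclusion via the face-closure axiom of a polyhedral complex, and the reverse inclusion via the covering property of $\mathscr{K}(D)$ plus the fact that a sub-chamber contained in a maximal chamber is a face of it. The only difference is cosmetic — you make explicit that the intersection axiom applied to $\{v\}$ and $K$ is what upgrades ``$v\in K$'' to ``$\{v\}$ is a face of $K$,'' where the paper simply cites ``the property of polyhedral complexes.''
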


\begin{proof}
	Since $\mathscr{C}(D)$ is a polyhedral complex (see, e.g., \cite{de2010triangulations}), a vertex of a given chamber is itself a chamber, and hence an element of $\mathscr{V}(D)$. This proves the inclusion $\mathscr{V}(D) \supseteq \bigcup_{K\in \mathscr{K}(D)} \ext(K)$.
		
		For the reverse inclusion,
		take $v\in \mathscr{V}(D)$. Since $X =\bigcup_{K\in \mathscr{K}(D)} K$, then $\{v\}$ is a chamber that is included in some maximal chamber $K\in\mathscr{K}(D)$. But then $\{v\}$ is a vertex of $K$ again from the property of polyhedral complexes. This finishes the proof. \qed
\end{proof}

\section{Geometrical structure of vertex-supported belief \label{sec:Chambers}}

From the properties of the chamber complex, it is easy to deduce that continuous piecewise linear functions, where the pieces are the chambers, attain their extreme values on $\mathscr{V}(D)$. Definition \ref{def:PiecewiseLinearFunction} and Corollary \ref{cor:PiecewiseLinearHasSolution} formalize this intuition. 
\begin{definition}\label{def:PiecewiseLinearFunction}
A function $f:X\to \R$ is said to be \emph{piecewise linear over the chamber complex $\mathscr{C}(D)$} if there exists a sequence of pairs $\{ (d_{C},a_{C})\ :\ C \in \mathscr{C}(D)  \}\subseteq \R^{\nx}\times \R$ such that
\begin{equation}\label{eq:PiecewiseLinearRep}
		f(x) = \sum_{C\in \mathscr{C}(D)} (\langle d_{C},x\rangle +a_{C}) \ind_{\ri(C)}(x),\qquad\forall x\in X.
\end{equation}
\end{definition}

\begin{corollary}\label{cor:PiecewiseLinearHasSolution} If a function $f:X\to \R$ is continuous and piecewise linear over the chamber complex  $\mathscr{C}(D)$, then it has at least one minimizer in $\mathscr{V}(D)$.	
\end{corollary}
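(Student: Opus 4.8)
The plan is to combine the compactness of $X$ with the covering property of the full-dimensional chambers, and then reduce everything to the elementary fact that an affine function on a polytope attains its minimum at a vertex.

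First I would use compactness. Since $X=\pi(D)$ is a polytope, it is compact, and $f$ is continuous by hypothesis, so $f$ attains a global minimum at some point $x^\ast\in X$. The whole task is then to produce a \emph{chamber vertex} at which $f$ takes this same minimal value. To that end, I would locate $x^\ast$ inside a maximal chamber: since $\mathscr{K}(D)$ is a covering of $X$, there exists $K\in\mathscr{K}(D)$ with $x^\ast\in K$.

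The heart of the argument is to show that $f$ is genuinely affine on the whole closed chamber $K$. On the relative interior of $K$, which equals $\into(K)$ because $K$ is full-dimensional, the representation \eqref{eq:PiecewiseLinearRep} gives $f(x)=\langle d_K,x\rangle+a_K$ for all $x\in\into(K)$. Since $K$ is a polytope we have $K=\overline{\into(K)}$, so $\into(K)$ is dense in $K$; and since $f$ is continuous on $X\supseteq K$ while the map $x\mapsto\langle d_K,x\rangle+a_K$ is continuous everywhere, two continuous functions agreeing on the dense subset $\into(K)$ must coincide on all of $K$. Hence $f\big|_K$ equals the affine function $x\mapsto\langle d_K,x\rangle+a_K$.

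Finally, I would invoke that an affine function attains its minimum over the polytope $K$ at one of its vertices: there exists $v\in\ext(K)$ with $f(v)\le f(x)$ for every $x\in K$, and in particular $f(v)\le f(x^\ast)$. Because $x^\ast$ is a global minimizer of $f$ on $X$, this forces $f(v)=f(x^\ast)$, so $v$ is itself a global minimizer. By Proposition \ref{prop:MinimalChambersAreExtremePoints} we have $v\in\ext(K)\subseteq\mathscr{V}(D)$, which yields the desired minimizer in $\mathscr{V}(D)$. I expect the continuity-extension step of the previous paragraph to be the only delicate point: the defining formula \eqref{eq:PiecewiseLinearRep} only prescribes the value of $f$ on each $\ri(C)$, so on the boundary of $K$ the representation is governed by the affine pieces of lower-dimensional chambers, and it is precisely continuity (together with $K=\overline{\into(K)}$) that guarantees these boundary values still agree with $\langle d_K,\cdot\rangle+a_K$, making $f\big|_K$ affine and thus amenable to the vertex-minimization argument.
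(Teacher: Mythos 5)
Your proof is correct and follows essentially the same route as the paper's: Weierstrass gives a minimizer, the minimizer lies in a chamber on which continuity forces $f$ to agree with its affine piece, an affine function on a polytope is minimized at a vertex, and Proposition \ref{prop:MinimalChambersAreExtremePoints} identifies that vertex as an element of $\mathscr{V}(D)$. The only (welcome) difference is that you work directly with a full-dimensional chamber from the covering $\mathscr{K}(D)$ and spell out the density-plus-continuity argument showing $f|_K$ is affine, a step the paper merely asserts.
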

\begin{proof}
Since $f$ is continuous and $X$ is nonempty and compact, Weierstrass theorem entails that $f$ attains its minimum at some point $x^*\in X$. Then, there exist a chamber $C\in \mathscr{C}(D)$ such that $x^*\in C$. Since $f$ is piecewise linear over the chamber complex and continuous, we have that $\traza{f}{C}$ is affine. Thus, there exists $v^*\in \ext(C)$ such that $f(v^*) = \traza{f}{C}(v^*) \leq  \traza{f}{C}(x^*) = f(x^*)$. Clearly $\ext(C)\subseteq \ext(K)$ for some $K\in \mathscr{K}(D)$ and hence Proposition \ref{prop:MinimalChambersAreExtremePoints} yields that $v^*\in \mathscr{V}(D)$, which finishes the proof.   \qed
\end{proof}

\subsection{Piecewise affinity of the fiber map}

We show in this section that the fiber map $S$ defined in equation \eqref{eq:sliceMap} is piecewise affine, in the sense of Definition~\ref{def:AffineMap} below. Some properties in this subsection seem to be well-known but are hard to find in the literature in the present form. Here, we provide a presentation (and proofs) that are exclusively based on mathematical programming tools. We also present one of our main contributions, which is Theorem~\ref{thm:piecewise-Bayesian}.

\begin{definition}\label{def:AffineMap}
	A set-valued map $M:X\subseteq\R^{\nx}\tto\R^\ny$ is \emph{affine} on a convex set $K\subseteq \dom(M)$ if
	\begin{equation*}
		M(\eta x_1+(1-\eta)x_0)=\eta M(x_1)+(1-\eta)M(x_0),
    	\label{eq:affinesetvaluedmap}
	\end{equation*}
	for all $x_0,x_1\in K$ and $\eta\in(0,1)$, where the addition of sets is understood in the sense of Minkowski. We say that $M$ is \emph{piecewise affine} if the domain of $M$ can be written as a finite union of convex subsets where, in each one of them, the set-valued map is affine. 
\end{definition}

Note that if a set-valued map $M$ is affine over $K$ and there exists $\bar{x}\in \ri(K)$ such that $|M(\bar{x})|=1$, then $M$ must be single-valued over $K$. Indeed, consider $x\in K$ and  consider arbitrary points $y',y''\in M(x)$. Since $\bar{x}\in\ri(K)$ we can find $\tilde{x}\in K$ and $\eta\in(0,1)$ such that $\bar{x}=\eta \tilde{x}+(1-\eta)x.$ We can take $\tilde{y}\in M(\tilde{x})$. But then both $\bar{y}'=\eta \tilde{y}+(1-\eta)y'$ and $\bar{y}''=\eta \tilde{y}+(1-\eta)y''$ belong to $M(\bar{x})$ which implies that $\bar{y}'=\bar{y}''$ and this yields that $y'=y''$. So $M(x)$ contains a single point, and from the arbitrariness of $x$, we conclude that $M$ is single-valued in $K$. With this observation, we can establish the following lemma.

\begin{lemma}\label{lem:affineprops}
Let $M:X\subseteq \R^{\nx}\tto \R^{\ny}$ be affine on a convex set $K\subseteq \dom(M)$, and consider a linear functional $\psi:\R^\ny\to\R$ such that for some $x\in K$, $\sup\psi(M(x))$ is finite and attained. Then:
\begin{enumerate}
    \item The function $\varphi(x):=\sup\psi(M(x))$ is affine \AS{on $K$}.
    \item The set-valued map $M_\psi(x):=\argmax_{y\in M(x)} \psi(y)$ is affine on $K$. Additionally, if there exists a point $\bar{x}\in\ri(K)$, with $|M_\psi(\bar{x})|=1$, then $M_{\psi}$ is single-valued on $K$.
\end{enumerate}
\end{lemma}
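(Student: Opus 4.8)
The plan is to exploit the Minkowski-sum definition of affinity for set-valued maps and push the linear functional $\psi$ through it. For part~1, I first take arbitrary $x_0,x_1\in K$ and $\eta\in(0,1)$, set $x_\eta:=\eta x_1+(1-\eta)x_0$, and use the affinity hypothesis $M(x_\eta)=\eta M(x_1)+(1-\eta)M(x_0)$. The key observation is that for a linear functional $\psi$, the supremum over a Minkowski sum splits: $\sup\psi(\eta M(x_1)+(1-\eta)M(x_0)) = \eta\sup\psi(M(x_1)) + (1-\eta)\sup\psi(M(x_0))$, since every element of the sum is $\eta y_1+(1-\eta)y_0$ and $\psi(\eta y_1+(1-\eta)y_0)=\eta\psi(y_1)+(1-\eta)\psi(y_0)$, with the two suprema decoupling because $y_1,y_0$ range independently. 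This immediately gives $\varphi(x_\eta)=\eta\varphi(x_1)+(1-\eta)\varphi(x_0)$, i.e. $\varphi$ is affine on $K$. The only subtlety is finiteness/attainment: I would argue that affinity propagates attainment and finiteness of $\sup\psi(M(\cdot))$ from the one point $x\in K$ where it is assumed, to all of $K$, using the Minkowski decomposition to bound $\varphi$ at nearby points and, via a relative-interior argument as in the paragraph preceding the lemma, at every point of $K$.

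For part~2, I would show the analogous splitting for the argmax sets. Writing $y_\eta\in M_\psi(x_\eta)$ means $y_\eta=\eta y_1+(1-\eta)y_0$ for some $y_1\in M(x_1)$, $y_0\in M(x_0)$ attaining the value $\varphi(x_\eta)$. Using the decoupled supremum from part~1, $\psi(y_\eta)=\varphi(x_\eta)=\eta\varphi(x_1)+(1-\eta)\varphi(x_0)$ forces $\psi(y_1)=\varphi(x_1)$ and $\psi(y_0)=\varphi(x_0)$ separately, hence $y_1\in M_\psi(x_1)$ and $y_0\in M_\psi(x_0)$. This gives the inclusion $M_\psi(x_\eta)\subseteq \eta M_\psi(x_1)+(1-\eta)M_\psi(x_0)$. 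The reverse inclusion is easier: any $\eta y_1+(1-\eta)y_0$ with $y_i\in M_\psi(x_i)$ lies in $M(x_\eta)$ and achieves value $\eta\varphi(x_1)+(1-\eta)\varphi(x_0)=\varphi(x_\eta)$, so it is in $M_\psi(x_\eta)$. Together these establish that $M_\psi$ is affine on $K$.

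The single-valuedness addendum then follows directly from the observation stated in the paragraph right before the lemma: $M_\psi$ is affine on $K$ and if $|M_\psi(\bar x)|=1$ at some $\bar x\in\ri(K)$, that observation (applied verbatim with $M_\psi$ in place of $M$) yields that $M_\psi$ is single-valued on all of $K$. So I would simply invoke it rather than reprove it.

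The main obstacle I anticipate is the finiteness-and-attainment bookkeeping in part~1: the hypothesis only gives finiteness and attainment of $\sup\psi(M(x))$ at a single $x\in K$, yet I need $\varphi$ to be well-defined (finite, attained) across all of $K$ for the affine identity and for the argmax sets in part~2 to be nonempty. I would handle this by first showing that the Minkowski splitting forces $\sup\psi(M(\cdot))$ to be finite at the two endpoints whenever it is finite at an interior combination (an affine function cannot be finite at an interior point while being $+\infty$ at an endpoint), bootstrapping from the given point via the relative-interior trick to cover the whole of $K$; attainment propagates by the same decomposition since the attaining maximizers at the endpoints combine to an attaining maximizer at the midpoint and conversely. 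Everything else is a routine consequence of the linearity of $\psi$ distributing over Minkowski sums.
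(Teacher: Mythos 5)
Your argument is essentially the paper's own: part~1 is the same Minkowski-sum splitting of the supremum, part~2 obtains the forward inclusion from affinity of $\varphi$ exactly as the paper does (the paper gets the reverse inclusion from convexity of $\gph M_\psi=\gph M\cap\{(x,y):\psi(y)-\varphi(x)\geq 0\}$, which is just the direct computation you spell out), and both you and the paper delegate the single-valuedness addendum to the observation preceding the lemma. The one point where you go beyond the paper---propagating finiteness and attainment of $\sup\psi(M(\cdot))$ from the single hypothesized point to all of $K$---is silently skipped in the paper's proof, and you should note that your bootstrap via the relative-interior trick genuinely requires the distinguished point to lie in $\ri(K)$: from a relative-boundary point finiteness need not propagate (e.g.\ $K=[0,1]$, $M(0)=\{0\}$, $M(t)=(-\infty,0]$ for $t>0$, $\psi(y)=-y$ satisfies the hypotheses at $x=0$ yet $\varphi\equiv+\infty$ on $(0,1]$), although in the paper's application the fiber map has compact values so the issue never arises.
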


\begin{proof}
Fix $x_0,x_1\in K$ and $\eta \in (0,1)$, and let $x_{\eta} = \eta x_1 + (1-\eta)x_0$. The first part of the Lemma follows from 
    \begin{align*}
    \sup\psi(M(x_\eta))&=\sup\psi(\eta M(x_1)+(1-\eta)M(x_0))\\
    &=\sup \{\eta \psi(M(x_1))+(1-\eta)\psi(M(x_0))\}\\
    &=\sup \{\eta \psi(M(x_1))\}+\sup \{(1-\eta)\psi(M(x_0))\}\\
    &=\eta\varphi(x_1)+(1-\eta)\varphi(x_0).
    \end{align*}

Let us now prove the second part. Indeed, if $y\in M_\psi(x_\eta)$, we have that $y\in M(x_\eta)$ and since $M$ is affine, then $y=\eta y_1+(1-\eta)y_0$ for some $y_0\in M(x_0)$ and $y_1\in M(x_1)$. Since $\varphi$ is affine we have that
    \begin{align*}
        \varphi(x_\eta)&=\eta\varphi(x_1)+(1-\eta)\varphi(x_0)\\
        &\geq \eta\psi(y_1)+(1-\eta)\psi(y_0)=\psi(y)=\varphi(x_\eta).
    \end{align*}
    But then $\eta\varphi(x_1)+(1-\eta)\varphi(x_0)= \eta\psi(y_1)+(1-\eta)\psi(y_0)$ and this can only occur if $\psi(y_1)=\varphi(x_1)$ and $\psi(y_0)=\varphi(x_0)$. Thus we have $y_1\in M_\psi(x_1)$ and $y_0\in M_{\psi}(x_0)$. We have hence proved the inclusion 
    \[
    M_\psi(x_\eta)\subseteq \eta M(x_1)+(1-\eta)M(x_0).
    \]
    The reverse inclusion corresponds to the convexity of the graph of $M_\psi$, which follows directly from the fact that the  $\gph M_\psi$ can be written as the intersection of
    $\gph M$  and $\{(x,y):\psi(y)-\varphi(x)\geq 0\}$.
    The fact that $M_{\psi}$ is single-valued if there exists a point $\bar{x}\in\ri(K)$ such that $|M_\psi(\bar{x})|=1$ follows from the discussion preceding the Lemma statement.  \qed
\end{proof}

\begin{lemma}\label{lem:PiecewiseAffine-S_PI}
	The fiber map $S$ defined in equation \eqref{eq:sliceMap} is piecewise affine over the chamber complex $\mathscr{C}(D)$.	More precisely, for any chamber $K\subseteq \mathscr{C}(D)$,  $S$ is affine on $K$. 
\end{lemma}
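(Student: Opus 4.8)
The plan is to fix a chamber $K\in\mathscr{C}(D)$, points $x_0,x_1\in K$ and $\eta\in(0,1)$, write $x_\eta=\eta x_1+(1-\eta)x_0$, and prove the two Minkowski inclusions defining affinity separately. The inclusion $\eta S(x_1)+(1-\eta)S(x_0)\subseteq S(x_\eta)$ needs no chamber structure at all: if $y_i\in S(x_i)$ then $(x_i,y_i)\in D$, and convexity of $D$ gives $\eta(x_1,y_1)+(1-\eta)(x_0,y_0)\in D$, i.e. $\eta y_1+(1-\eta)y_0\in S(x_\eta)$. All the work is in the reverse inclusion $S(x_\eta)\subseteq\eta S(x_1)+(1-\eta)S(x_0)$.

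For the reverse inclusion I would first reduce to extreme points. Since $S(x_\eta)$ is a nonempty polytope we have $S(x_\eta)=\co(\ext S(x_\eta))$, and $\eta S(x_1)+(1-\eta)S(x_0)$ is convex; hence it suffices to decompose each vertex $y_\eta\in\ext S(x_\eta)$ as $\eta y_1+(1-\eta)y_0$ with $y_i\in S(x_i)$, a general point of $S(x_\eta)$ being then handled by taking the same convex combination of the decompositions of the vertices. So fix a vertex $y_\eta$ of $S(x_\eta)$, let $J=\{i\in[m]:g_i(x_\eta,y_\eta)=0\}$ be its active set, and let $F=D\cap[g_J=0]$ be the minimal face of $D$ containing $(x_\eta,y_\eta)$, so that $(x_\eta,y_\eta)\in\ri(F)$ and $x_\eta\in\pi(F)$.

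The key structural step is that $\pi$ restricts to an affine bijection from $F$ onto $\pi(F)$. Because $y_\eta$ is a vertex of $S(x_\eta)=\{y:B_jy\le b_j-A_jx_\eta,\ j\in[m]\}$, the rows $\{B_j:j\in J\}$ span $\R^{\ny}$; therefore any two points of $F$ lying over the same $x$ satisfy $B_jy=b_j-A_jx$ for every $j\in J$, a system whose homogeneous part has only the trivial solution, and so they coincide. Thus $\pi|_F$ is injective, and its inverse $v:\pi(F)\to\R^{\ny}$ (returning the $y$-coordinate) is affine, satisfies $(x,v(x))\in F\subseteq D$ for all $x\in\pi(F)$, and obeys $v(x_\eta)=y_\eta$. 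Consequently, \emph{if} I can show $x_0,x_1\in\pi(F)$, then $v(x_0)\in S(x_0)$ and $v(x_1)\in S(x_1)$, and affinity of $v$ gives $y_\eta=v(x_\eta)=\eta v(x_1)+(1-\eta)v(x_0)$, completing the decomposition.

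It remains to prove $x_0,x_1\in\pi(F)$, and this is exactly where the chamber complex enters; I expect it to be the main obstacle, since it is precisely the statement that the vertex selector $v$ stays feasible as $x$ moves inside $K$. First, since $x_\eta\in\pi(F)$, the defining intersection \eqref{eq:ChamberOfAPoint} immediately yields $\sigma(x_\eta)\subseteq\pi(F)$. Next, I claim $\sigma(x_\eta)$ is a face of $K$: picking $z_0\in\ri(K)$ so that $K=\sigma(z_0)$, any projected face $\pi(F')$ through $z_0$ contains $K=\sigma(z_0)$ and hence $x_\eta\in K$, so the intersection defining $\sigma(x_\eta)$ ranges over a larger family of projected faces, giving $\sigma(x_\eta)\subseteq\sigma(z_0)=K$; two nested cells of the polyhedral complex $\mathscr{C}(D)$ meet in a common face by Definition \ref{def:polycomplex}, so $\sigma(x_\eta)=\sigma(x_\eta)\cap K$ is a face of $K$. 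Finally, $x_\eta\in\sigma(x_\eta)$ is written as the convex combination $\eta x_1+(1-\eta)x_0$ of points of $K$ while lying in a face of $K$; by extremality of faces of a polytope both $x_0$ and $x_1$ belong to $\sigma(x_\eta)\subseteq\pi(F)$. This closes the reverse inclusion, shows $S$ is affine on the arbitrary chamber $K$, and, since the chambers cover $X$, establishes that $S$ is piecewise affine over $\mathscr{C}(D)$.
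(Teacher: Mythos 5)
Your proof is correct, but it follows a genuinely different route from the paper's. The paper's proof is short because it outsources the key step to \cite[Proposition 2.4]{rambau1996projections}: writing $x=\sum_i\lambda_i v_i$ over the vertices $v_1,\dots,v_k$ of the chamber $K$, it invokes the identity $S(x)=\sum_i\lambda_i S(v_i)$ and reads off affinity directly. You instead give a self-contained argument: the easy Minkowski inclusion from convexity of $D$, reduction of the hard inclusion to vertices of $S(x_\eta)$, and then the construction of an affine section $v=(\pi|_F)^{-1}$ of the projection over the minimal face $F=D\cap[g_J=0]$ of $(x_\eta,y_\eta)$ (which is indeed a face of $D$, consistent with \eqref{eq:FacesToSets}, and on which $\pi$ is injective because the active rows $\{B_j\}_{j\in J}$ span $\R^{\ny}$ at a vertex of the fiber). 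The only place the chamber structure enters is your verification that $x_0,x_1\in\pi(F)$, which you obtain from $\sigma(x_\eta)\subseteq\pi(F)$ via \eqref{eq:ChamberOfAPoint}, the fact that $\sigma(x_\eta)$ is a face of $K$ (using Proposition \ref{prop:RequiredFaces} and Definition \ref{def:polycomplex}), and extremality of faces; all of these steps check out. What your approach buys is independence from the external citation and an explicit exhibition of the affine vertex selectors $x\mapsto v(x)$, which is essentially the machinery the paper re-derives anyway (via Lemma \ref{lem:affineprops} and exposing functionals) in the proof of Theorem \ref{thm:piecewise-Bayesian}; what the paper's route buys is brevity. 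In effect you have reproved the relevant case of Rambau's proposition, so your argument could even replace the citation if a self-contained presentation were preferred.
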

\begin{proof}
\
Let $K\in \mathscr{C}(D)$ and let $v_1,\ldots,v_k$ be the vertices of $K$. If $x_0,x_1\in K$ then $$x_0=\sum_{i=1}^k \lambda_{0,i}v_i\quad x_1=\sum_{i=1}^k \lambda_{1,i}v_i$$
for some $\lambda_{0,i},\lambda_{1,i}\geq 0$, $i\in[k]$, such that $\sum_{i=1}^k\lambda_{0,i}=\sum_{i=1}^k\lambda_{1,i}=1$. Consider now a point $x_{\eta}:=\eta x_1+(1-\eta)x_0$ for some $\eta\in (0,1)$. We clearly have that $x_{\eta}=\sum_{i=1}^k\lambda_{\eta,i} v_i$
with $\lambda_{\eta,i}:=\eta \lambda_{0,1}+(1-\eta)\lambda_{0,i}$, $i\in[k]$.

Applying the representation of \cite[Proposition 2.4]{rambau1996projections}, we know that for each convex combination $x = \sum_{i=1}^k \lambda_iv_i \in K$, we have that $S(x) = \sum_{i=1}^k \lambda_i S(v_i)$. Thus, we can write
\begin{align*}
S(x_{\eta})=\sum_{i=1}^k\lambda_{\eta,i}S(v_i)&=\eta\sum_{i=1}^k\lambda_{1,i}S(v_i)+(1-\eta)\sum_{i=1}^k\lambda_{0,i}S(v_i)\\
&=\eta S(x_1)+(1-\eta) S(x_0).
\end{align*} 
This concludes the proof.\qed
\end{proof}

Recall from \cite{SalasSvensson2020Existence} that a belief $\beta:X\to\mathscr{P}(Y)$ is said to be \textit{weak continuous} if for every sequence $(x_k)\subseteq X$ converging to $x\in X$ the measure $\beta_{x_k}$ weak converges to $\beta_x$ (in the sense of \cite[Chapter 13]{klenke2013probability}), that is,
\begin{equation}\label{eq:WeakConvergence}
\int_Y f(y)d\beta_{x_k}(y) \xrightarrow{k\to\infty}\int_Y f(y) d\beta_{x}(y),\quad \forall f:Y\to \R \text{ continuous.}
\end{equation}

\begin{theorem} \label{thm:piecewise-Bayesian}
 Consider a random cost $c:\Omega\to \Sph_{\ny}$ with nonatomic distribution and the vertex-supported belief $\beta:X\to \mathscr{P}(Y)$ over $S$ induced by $c$ as defined in \eqref{eq:VertexSupportedBelief}. Then,
\begin{enumerate}
    \item $\beta$ is weak continuous, and thus for any lower semicontinuous function $f:~X\times Y\to \R$, the problem $\displaystyle\min_{x\in X}\, \E_{\beta_x}[f(x,\cdot)]$ has a solution.
    \item The function $\theta:X\to \R$ given by $\theta(x) := \langle d_1,x\rangle + \E_{\beta_x}[ \langle d_2,\cdot\rangle ]$ is continuous and piecewise linear over $\mathscr{C}(D)$.
\end{enumerate}
In particular, problem \eqref{eq:target-problem} has at least one solution in $\mathscr{V}(D)$.
\end{theorem}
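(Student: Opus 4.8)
The plan is to establish the three assertions in order, treating weak continuity first, since both the continuity of $\theta$ and (through a cited existence result) the solvability of $\min_{x\in X}\E_{\beta_x}[f(x,\cdot)]$ will be consequences of it. I would realize $\beta_x$ as the pushforward of $\P$ under the optimal-response map $T(x,\omega):=\argmin_{y\in S(x)}\langle c(\omega),y\rangle$, which is single-valued $\P$-almost surely: for fixed $x$, a minimizer over the polytope $S(x)$ fails to be unique only when $-c$ lies on the boundary of the normal cone $N_{S(x)}(v)$ of some vertex $v\in\ext(S(x))$, and this bad set is a finite union of $\mathcal{H}^{\ny-1}$-negligible subsets of $\Sph_{\ny}$, hence $\P$-null by the nonatomic hypothesis \eqref{eq:NonatomicDistribution}. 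Consequently $\int_Y f\,d\beta_x=\int_\Omega f(T(x,\omega))\,d\P(\omega)$ for every continuous $f$, consistently with \eqref{eq:VertexSupportedBelief}.

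For weak continuity, fix $x$ and a sequence $x_k\to x$ in $X$. The fiber map $S$ is continuous as a set-valued map on $X$ (it is the solution map of the parametric linear system $By\leq b-Ax$, hence Hausdorff-Lipschitz), so Berge's maximum theorem applies: for every $\omega$ at which the minimizer over $S(x)$ is unique — a set of full $\P$-measure by the previous paragraph, and depending only on the limit point $x$ — any selection $T(x_k,\omega)\in\argmin_{y\in S(x_k)}\langle c(\omega),y\rangle$ converges to $T(x,\omega)$. Hence $f(T(x_k,\omega))\to f(T(x,\omega))$ pointwise $\P$-almost surely, and since $Y$ is compact the integrands are bounded by $\sup_Y|f|<\infty$; dominated convergence then gives $\int_Y f\,d\beta_{x_k}\to\int_Y f\,d\beta_x$, i.e. weak continuity. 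The existence of a minimizer of $\min_{x\in X}\E_{\beta_x}[f(x,\cdot)]$ for lower semicontinuous $f$ follows from the compactness of $X$ together with the existence theorem of \cite{SalasSvensson2020Existence}. Taking $f(x,y)=\langle d_2,y\rangle$ and adding the continuous term $\langle d_1,x\rangle$ shows in particular that $\theta$ is continuous.

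For the piecewise linearity in assertion 2, I would argue chamber by chamber. Fix $K\in\mathscr{C}(D)$; by Lemma~\ref{lem:PiecewiseAffine-S_PI} the map $S$ is affine on $K$. Choosing any $\bar{x}\in\ri(K)$, the nonatomic hypothesis again yields a full-measure set of $\omega$ for which the minimizer over $S(\bar{x})$ is unique, so by Lemma~\ref{lem:affineprops}(2) (applied with $\psi=-\langle c(\omega),\cdot\rangle$) the response $x\mapsto T(x,\omega)$ is single-valued and affine on all of $K$ for $\P$-a.e. $\omega$. Integrating the affine identity $T(x_\eta,\omega)=\eta T(x_1,\omega)+(1-\eta)T(x_0,\omega)$ over $\omega$ — legitimate since the responses remain in the compact set $Y$ — shows the centroid $\mathfrak{b}(x)=\E_\omega[T(x,\omega)]$ is affine on $K$, whence $\theta(x)=\langle d_1,x\rangle+\langle d_2,\mathfrak{b}(x)\rangle$ is affine on $K$. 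As $K$ was arbitrary, $\theta$ is piecewise linear over $\mathscr{C}(D)$ in the sense of Definition~\ref{def:PiecewiseLinearFunction}.

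The concluding claim then combines both parts: $\theta$ is continuous and piecewise linear over $\mathscr{C}(D)$, so Corollary~\ref{cor:PiecewiseLinearHasSolution} produces a minimizer in $\mathscr{V}(D)$, which, through the Bayesian reformulation \eqref{eq:target-problem-Bayesian}, solves \eqref{eq:target-problem}. I expect assertion~1 to be the main obstacle: the delicate point is the measure-theoretic bookkeeping around the $\P$-null sets where the argmin is multi-valued — ensuring the pushforward description of $\beta_x$ is legitimate, that the exceptional set for Berge continuity depends only on the limit $x$, and that the continuity of the fiber map $S$ is correctly invoked. The piecewise-linearity part is comparatively mechanical once Lemmas~\ref{lem:affineprops} and~\ref{lem:PiecewiseAffine-S_PI} are available.
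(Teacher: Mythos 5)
Your proof is correct, but it takes a genuinely different route from the paper's. You represent $\beta_x$ as the pushforward of $\P$ under the a.s.\ single-valued random argmin map $T(x,\omega)$, obtain weak continuity scenario-wise via Berge's maximum theorem plus dominated convergence, and obtain chamber-wise affinity by applying Lemma~\ref{lem:affineprops}(2) with $\psi=-\langle c(\omega),\cdot\rangle$ for each fixed $\omega$ and then integrating the resulting affine identity. The paper instead argues purely geometrically on each full-dimensional chamber $K$: it builds affine vertex-tracking maps $y_1,\dots,y_k$ via exposing functionals, shows by a separation argument that $\{y_i(x)\}=\ext(S(x))$ throughout $K$, proves that the normal cones $N_{S(x)}(y_i(x))$ are \emph{constant} on $\into(K)$ (identity \eqref{eq:NormalCones_Function}), and thereby derives the explicit representation $\E_{\beta_x}[h]=\sum_i c_i\,h(y_i(x))$ with constant weights $c_i$, from which continuity and affinity both follow at once. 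Your approach is shorter and more modular, and it sidesteps the paper's somewhat delicate separation and normal-cone arguments; the price is that it outsources work to external standard facts --- Hausdorff (lower semi)continuity of the polyhedral fiber map $S$ on $\pi(D)$ (needed for Berge), measurability of the selection $\omega\mapsto T(x,\omega)$, and the interchange of expectation with the affine identity --- which you assert rather than prove, whereas the paper's self-contained route additionally delivers the chamber-wise vertex probabilities $c_i$ explicitly, a structural fact it exploits later (e.g.\ in Example~\ref{ex:Knapsack2D}). Both arguments cover all chambers (you work on an arbitrary $C\in\mathscr{C}(D)$ via $\ri(C)$; the paper restricts to $\mathscr{K}(D)$ and uses that these cover $X$), and both conclude identically via Corollary~\ref{cor:PiecewiseLinearHasSolution}.
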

\begin{proof}
    Since $\mathscr{K}(D)$ is a covering of $X$, then it is enough to prove the assertions within a given chamber $K\in\mathscr{K}(D)$. Let $\bar{x}\in\into K$ and consider the vertices  $\bar{y}_1,\ldots,\bar{y}_k$ of $S(\bar{x})$. Since $S$ is affine on $K$ with compact polyhedral images, we first claim that there exist affine functions $y_1,y_2,\ldots,y_k:K\to Y$, such that for all $x\in K$ one has that $\{y_i(x)\}_{i\in[k]}= \ext(S(x))$ or, equivalently, that
    \[
   \{y_i(x)\}_{i\in[k]}\subseteq \ext(S(x))\quad\text{ and }\quad S(x)=\mathrm{conv}\{y_i(x)\ :\ i\in[k]\}.
    \]
    Indeed, 
    for each $i\in[k]$ there exists a linear functional $\psi_i$ that exposes $\bar{y}_i$, i.e., such that $\{\bar{y}_i\}=\{y\in S(\bar{x}):\psi_i(y)\geq \psi_i(\bar{y}_i)\}$. 
    Then from Lemma \ref{lem:affineprops} the maximum of $\psi_i$ over $S(x)$ defines the affine function 
    \[
    x\mapsto y_i(x)=\argmax\{\psi_i(y):y\in S(x)\},
    \]
    for each $i\in[k]$. This proves that $\{y_1(x),y_2(x),\ldots,y_k(x)\}\subseteq \ext(S(x))$ for each $x\in K$. Now, it is direct that $S(x)\supseteq \mathrm{conv}(y_1(x),y_2(x),\ldots,y_k(x))$, for each $x\in K$, and so we only need to prove the direct inclusion. Suppose that this is not the case and choose $x\in K$ and $y\in S(x)\setminus M$ with $M = \mathrm{conv}(y_1(x),y_2(x),\ldots,y_k(x))$. Since $M$ is compact, there exist a linear functional $\psi$ that strictly separates $y$ from $M$, and so $\sup \psi(S(x))>\sup \psi(M)$. Now, since $\bar{x}\in \into (K)$, there exists $x'\in K$ and $\eta\in (0,1)$ such that $\bar{x} = \eta x + (1-\eta)x'$. Let $M' = \mathrm{conv}(y_1(x'),y_2(x'),\ldots,y_k(x'))$. Then, by affinity of the functions $y_1,\ldots,y_k$, one has that $S(\bar{x}) = \eta M + (1-\eta)M'$. Then, by Lemmas \ref{lem:affineprops} and \ref{lem:PiecewiseAffine-S_PI} we can write
    \begin{align*}
        \sup \psi(S(\bar{x})) &=  \sup \psi(\eta M + (1-\eta)M')\\
        &= \eta\sup \psi(M) + (1-\eta)\sup \psi(M')\\
        &< \eta\sup \psi(S(x)) + (1-\eta)\sup \psi(S(x')) = \sup \psi(S(\bar{x})),
    \end{align*}
    which is a contradiction. This proves our first claim. Our second claim is that for each $i\in[k]$
    \begin{equation}\label{eq:NormalCones_Function}
       N_{S(x)}(y_i(x)) = \bigcup_{j\in[k]: y_j(x)=y_i(x)} N_{S({\bar{x}})}(\bar{y}_j),  \quad \forall x\in K. 
    \end{equation}
    Indeed, for the direct inclusion, fix $x\in K$ and choose $\psi\in \into(N_{S(x)}(y_i(x)))$, which is nonempty since $y_i(x)\in \ext(S(x))$. This means that $\psi$ attains its maximum on $S(x)$ only at $y_i(x)$. Now, $\psi$ attains its maximum over $S(\bar{x})$ at some $\bar{y}_j$, and so $\psi \in N_{S({\bar{x}})}(\bar{y}_j)$. Since $\bar{x}\in \into(K)$, there exists $x'\in K$ and $\eta\in (0,1)$ such that $\bar{x} = \eta x + (1-\eta)x'$. Then, by construction, $\bar{y}_j= \eta y_j(x) + (1-\eta)y_j(x')$, which yields that $\psi$ attains its maximum on $S(x)$ at $y_j(x)$. This implies that $y_i(x) = y_j(x)$, and so $\psi \in \bigcup_{j\in[k]: y_j(x)=y_i(x)} N_{S({\bar{x}})}(\bar{y}_j)$. The inclusion is proven by taking closure of $\into(N_{S(x)}(y_i(x)))$. Now, for the reverse inclusion, choose $j\in [k]$ such that $y_i(x) = y_j(x)$ and $\psi\in N_{S(\bar{x})}(\bar{y}_j)$. Then, $\psi$ attains its maximum on $S(\bar{x})$ at $\bar{y}_j$ and so, replicating the argument above, we have that $\psi$ attains its maximum on $S(x)$ at $y_j(x)$. Since $y_i(x) = y_j(x)$, the conclusion follows. This proves the second claim.
    
    Now, \eqref{eq:NormalCones_Function} entails that $x\mapsto N_{S(x)}(y_i(x))$ is constant in $\into (K)$ since for every $x\in \into(K)$, one has that $y_i(x)\neq y_j(x)$ if and only if $i\neq j$. Then for each $i\in[k]$ it is clear that $$p_c(x,y_i(x))=\P [-c(\omega)\in N_{S(x)}(y_i(x))]$$ is constant in $\into (K)$, equal to 
    \begin{equation}
    \label{eq:defofci}
    c_i:=p_c(\bar{x},y_i(\bar{x})),    
    \end{equation} 
    and for points $x$ in the boundary of $K$ it holds $ p_c(x,y_i(x))=\sum\{c_j:j\in[k],\,  y_i(x)=y_j(x)\}.$
    Thus, it follows that for any continuous function $h:Y\to \R$ we have
    \begin{equation}
        \label{eq:vertexsuppvalue}
        \E_{\beta_{x}}[h]=\sum_{i=1}^kh(y_i(x))c_i, \qquad \forall x\in K
    \end{equation}

    This last expression is continuous on $K$. Then, $x\mapsto \E_{\beta{x}}[h]$ must be continuous on $X$ and, since $h$ is arbitrary,  
    $\beta$ is a weak continuous belief. Then, for every lower semicontinuous function $f:X\times Y\to \R$, $\E_{\beta_x}[f(x,\cdot)]$ is lower semicontinuous as well (see \cite{SalasSvensson2020Existence}). Finally, from  \eqref{eq:vertexsuppvalue} it is clear that taking $f(x,y)=\langle d_1,x\rangle + \langle d_2,y\rangle $, the function $\theta(x)=\E_{\beta_x}[f(x,\cdot)]$ is affine over $K$.\qed
\end{proof}

\subsection{Sample average formulation}
Problem \eqref{eq:target-problem} has an intrinsic difficulty which consists in how to evaluate the objective function  $\theta(x) = \langle d_1,x\rangle + \E_{\beta_x}[ \langle d_2,\cdot\rangle ]$. To make an exact evaluation of $\theta$ at a point $x\in X$ one would require to compute all the vertices $y_1,\ldots,y_k$ of $S(x)$ and the values $c_1,\ldots,c_k$ defined as the ``sizes'' of the respective normal cones at each vertex $y_i$, as in \eqref{eq:defofci}. This is not always possible.\\

To deal with this issue, we consider the well-known sample average approximation (SAA) method for stochastic optimization (see, e.g., \cite{Shapiro2021Lecture3rd,HomemdeMelloBayraksan2014Survey}). We take an independent sample $\{\hat{c}_1,\ldots,\hat{c}_{N_0}\}$ of the random lower-level cost $c(\cdot)$ 
and try to solve the (now deterministic) problem
\begin{equation}\label{eq:target-problem-SAA}
\left\{\begin{array}{cl}
\min_x & \langle d_1,x\rangle + \frac{1}{N_0} \sum_{i=1}^{N_0} \langle d_2,y_i(x)\rangle\\
\\
s.t. & \DS{A_lx \leq b_l}\\
& \forall i \in [N_0],\, y_i(x)\text{ solves }\left\{\begin{array}{cl}
    \min_y & \langle \hat{c}_i, y \rangle \\
     s.t. & \DS{A_fx + B_fy \leq b_f}. 
\end{array}\right.
\end{array}\right.
\end{equation}

\begin{proposition}\label{prop:SingleValued} Assume that $c(\cdot)$ has a nonatomic distribution over $\Sph_{\ny}$, in the sense of \eqref{eq:NonatomicDistribution}. Then, with probability 1, we have that
\[
 \argmin_{y}\left\{ \langle \hat{c}_i, y\rangle\ :\  \DS{A_fx + B_fy \leq b_f} \right\} \text{ is a singleton}\quad \forall i\in [N_0],\,\forall x\in X.
\]
\end{proposition}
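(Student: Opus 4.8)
The plan is to reduce the statement to a single measure-zero event on the sphere that does not depend on $x$, and then to conclude by the nonatomicity hypothesis \eqref{eq:NonatomicDistribution} together with a union bound over the finitely many samples. The whole difficulty is combinatorial-geometric and concerns making the ``bad'' set of cost directions uniform in $x$; once that is done, the probability estimate is immediate.

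First I would fix one sample $\hat{c}:=\hat{c}_i$ and a point $x\in X$ and recall that $S(x)$ is a nonempty compact polytope (as noted after \eqref{eq:sliceMap}), so the linear program $\min\{\langle \hat{c},y\rangle : y\in S(x)\}$ attains its optimum on a nonempty face $F$ of $S(x)$. The minimizer fails to be a singleton exactly when $\adim(F)\geq 1$, in which case $F$ contains an edge (a one-dimensional face) of $S(x)$ whose direction $d$ must satisfy $\langle \hat{c},d\rangle=0$, since $\langle \hat{c},\cdot\rangle$ is constant on $F$. Thus non-uniqueness at $x$ forces $\hat{c}$ to be orthogonal to some edge direction of $S(x)$.

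The key observation is that the set of possible edge directions of $S(x)$ is finite and independent of $x$. Writing $S(x)=\{y : B_fy\leq b_f-A_fx\}$, any edge is cut out by a family of active rows of $B_f$ whose gradients have rank $\ny-1$; hence its direction is, up to scaling and sign, the unique vector $d_J$ spanning the orthogonal complement of $\{B_j : j\in J\}$ for some $J\subseteq[m_f]$ with $|J|=\ny-1$ and $\{B_j\}_{j\in J}$ linearly independent. Only the right-hand side $b_f-A_fx$ moves with $x$, so the candidate directions $d_J$ are the same for every $x\in X$. Letting $\mathcal{J}$ be this finite family of index sets, I would define the bad set $B:=\bigcup_{J\in\mathcal{J}}\big(d_J^{\perp}\cap\Sph_{\ny}\big)$; each $d_J^{\perp}\cap\Sph_{\ny}$ is a great subsphere of dimension $\ny-2$, so that $\mathcal{H}^{\ny-1}(B)=0$.

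Combining the two steps, if $\hat{c}\notin B$ then $\langle \hat{c},d_J\rangle\neq 0$ for all $J\in\mathcal{J}$, so no edge of any $S(x)$ can lie in the optimal face, and the minimizer is a singleton simultaneously for that sample and every $x\in X$. By \eqref{eq:NonatomicDistribution} we get $\P[c(\omega)\in B]=0$, hence $\P[\hat{c}_i\in B]=0$ for each $i$, and a union bound over $i\in[N_0]$ gives $\P[\exists i : \hat{c}_i\in B]=0$; on the complementary event of full probability the argmin is a singleton for all $i\in[N_0]$ and all $x\in X$. I expect the main obstacle to be precisely the uniform-in-$x$ reduction: one must argue carefully that the finitely many candidate directions $d_J$ depend only on $B_f$ (not on the offset), and that non-uniqueness is equivalent to the optimal face containing an edge orthogonal to $\hat{c}$, while handling degenerate fibers $S(x)$ that may be lower-dimensional (where edges are still governed by rank-$(\ny-1)$ subfamilies of $\{B_j\}$). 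The probabilistic part is then routine.
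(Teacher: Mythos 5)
Your proof is correct, and it ultimately identifies the same measure-zero ``bad set'' as the paper --- the finite union over $(\ny-1)$-element linearly independent subfamilies $J$ of rows of $B_f$ of the sets $\mathrm{span}\{B_j : j\in J\}\cap\Sph_{\ny}$ (your $d_J^{\perp}$ is exactly that span) --- but by a genuinely different route. The paper works on the dual side: it takes an optimal \emph{basic} dual solution $\rho^*$ with at most $\ny$ nonzeros and plays complementary slackness against a second primal optimum having positive slack in a basic row, forcing one more component of $\rho^*$ to vanish, so that $\hat c = B_f^{\top}\rho^*$ lies in the span of $\ny-1$ columns of $B_f^{\top}$; uniformity in $x$ comes for free because the dual feasibility system $B_f^{\top}\rho=\hat c$, $\rho\le 0$ does not involve $x$. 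You work on the primal side: a non-singleton argmin is a face of dimension at least one, hence contains an edge of $S(x)$, whose direction is one of finitely many vectors determined by rank-$(\ny-1)$ subfamilies of the rows of $B_f$ and must be orthogonal to $\hat c$; here the independence from $x$ is geometrically transparent, since only the right-hand side $b_f-A_fx$ moves. Your version is more elementary (no duality, no basic solutions) and makes the uniform-in-$x$ reduction visible at a glance; the paper's version sidesteps any discussion of faces and edges of possibly degenerate fibers, a point you correctly flag and which is handled by the standard formula $\adim(F)=\ny-\mathrm{rank}(B_{J_F})$ for the active index set $J_F$ of a face. Both arguments finish identically, invoking \eqref{eq:NonatomicDistribution} and a union bound over the $N_0$ samples.
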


\begin{proof}
	
We begin by noting that this result is well-known for a fixed $x\in X$. Here we extend it to handle the universal quantifier over $x$.
\DS{
Consider the set
\[
\mathcal{N}:=\big\{\omega\in\Omega\, :\, \exists x\in X,\, |\argmin_{y}\left\{ \langle \hat{c}_i(\omega), y\rangle :  A_fx + B_fy \leq b_f \right\}| > 1\big\}.
\]
Let $\omega\in \mathcal{N}$, and choose $x\in X$ as in the definition of $\mathcal{N}$. The dual of  the LP $\min_{y}\left\{ \langle \hat{c}_i(\omega), y\rangle :  B_fy \leq b_f - A_fx \right\}$ is 
	\[
	\max_{\rho}\left\{ \rho^T (b_f-A_fx) \,: \,  B_f^T \rho = \hat{c}_i(\omega),\, \rho \leq 0\right\}.
	\]
Take $\rho^*$ an optimal basic solution to the dual. This solution must have at most $\ny$ non-zeros; moreover, after potentially permuting columns of $B_f^T$, we can assume $\rho^* = ((\tilde{B}_f^T)^{-1} \hat{c}_i(\omega), \mathbf{0}_{m-\ny})$ where $\tilde{B}_f$ is an invertible $\ny \times \ny$ submatrix of $B_f$ and $\mathbf{0}_{m_f-\ny}$ is the vector of $m_f-\ny$ zeros. 

Let $\tilde{b}_f - \tilde{A}_fx$ be the sub-vector of $b_f - A_fx$ corresponding to the rows of $\tilde{B}_f$. 
From linear programming theory, we know there is an optimal solution to the primal $y^*$ for which $\tilde{B}_f y^* = \tilde{b}_f - \tilde{A}_fx$. 
Since we are assuming there are at least two solutions to the primal, there must exist another optimal solution $\hat{y}$ such that one of the inequalities in $\tilde{B}_f \hat{y} \leq \tilde{b}_f - \tilde{A}_fx$ has positive slack. This implies that one of the components of $(\tilde{B}_f^T)^{-1} \hat{c}_i(\omega)$ must be zero, as the primal-dual pair $(\hat{y}, \rho^*)$ must satisfy complementary slackness. We conclude that $\rho^*$ has at most $\ny - 1$ non-zeros.

Dual feasibility thus implies that $\hat{c}_i(\omega)$ is in the span of $\ny-1$ columns of $B^{\top}_f$ and such a subspace (intersected with the unit sphere $\Sph_{\ny}$) has null measure with respect to the Hausdorff measure $\mathcal{H}^{\ny-1}$. We conclude by noting that there are only finitely many such subspaces; thus, the set of $\{\hat{c}_i(\omega)\,:\, \omega\in \mathcal{N}\}$ has null measure.} \qed
\end{proof}

Based on the above proposition, for the sample $\{\hat{c}_1,\ldots,\hat{c}_{N_0}\}$ we can, for each $i\in [N_0]$, define the mapping $x~\mapsto~y_i(x)$  where $y_i(x)$ is the unique element in $\argmin_{y}\left\{ \langle \hat{c}_i, y\rangle\ :\  A_fx + B_fy \leq b_f \right\}$. The following direct theorem shows that the objective of the sample average approximation has also the property of being (almost surely) piecewise linear over the chamber complex $\mathscr{C}(D)$. 
\begin{theorem}\label{thm:piecewise-SAA}
If $c$ has a nonatomic distribution over $\Sph_{\ny}$, then, with probability 1, the function $\theta: X\to \R$ given by $\theta(x) := \langle d_1,x\rangle +\frac{1}{N}\sum_{i=1}^N \langle d_2,y_i(x)\rangle$ is well-defined and it is piecewise linear over the chamber complex $\mathscr{C}(D)$.
\end{theorem}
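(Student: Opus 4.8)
The plan is to reduce the statement to the two facts already established: that the selection $y_i$ is well defined (Proposition \ref{prop:SingleValued}) and that the fiber map is affine on each chamber (Lemma \ref{lem:PiecewiseAffine-S_PI}). First I would fix a realization of the sample lying in the probability-one event furnished by Proposition \ref{prop:SingleValued}, so that for every $i$ and every $x\in X$ the minimizer $y_i(x)$ of $\langle \hat{c}_i,\cdot\rangle$ over $S(x)$ is unique; this is exactly what makes $\theta$ well defined. Since $\{\ri(C):C\in\mathscr{C}(D)\}$ is a partition of $X$, it suffices to exhibit, for each chamber $K\in\mathscr{C}(D)$, an affine function agreeing with $\theta$ on $K$, because collecting the corresponding coefficients $(d_C,a_C)$ then yields the representation required by Definition \ref{def:PiecewiseLinearFunction}.

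The core step is to show that each selection $y_i$ is affine on a fixed chamber $K$. Writing $y_i(x)=\argmax_{y\in S(x)}\langle -\hat{c}_i,y\rangle$, I would apply Lemma \ref{lem:affineprops} with $M=S$ and $\psi=\langle -\hat{c}_i,\cdot\rangle$: the hypothesis that $\sup\psi(S(x))$ is finite and attained holds because each $S(x)$ is a nonempty polytope, and $M=S$ is affine on $K$ by Lemma \ref{lem:PiecewiseAffine-S_PI}. The second part of Lemma \ref{lem:affineprops} then gives that the set-valued map $x\mapsto y_i(x)$ is affine on $K$; moreover, picking any $\bar{x}\in\ri(K)$, Proposition \ref{prop:SingleValued} guarantees $|y_i(\bar{x})|=1$, so the single-valuedness clause of Lemma \ref{lem:affineprops} applies and $y_i$ is single valued on all of $K$. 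A single-valued affine set-valued map is nothing but an affine vector-valued function, so $x\mapsto y_i(x)$ is affine on $K$, and hence so is $x\mapsto\langle d_2,y_i(x)\rangle$.

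Summing over $i$ and adding the affine term $\langle d_1,x\rangle$, the restriction of $\theta$ to $K$ is affine for every chamber $K$, which is precisely piecewise linearity over $\mathscr{C}(D)$. I expect no serious obstacle here---this is why it is labelled a direct theorem---and the only points requiring care are bookkeeping: checking the two hypotheses of Lemma \ref{lem:affineprops} (finiteness and attainment of the supremum, which follow from compactness of $S(x)$, and single-valuedness at a relative-interior point of $K$, which follows from Proposition \ref{prop:SingleValued}), and observing that an affine set-valued map that happens to be single valued is genuinely an affine map, so that the chamberwise affine pieces assemble into the representation of Definition \ref{def:PiecewiseLinearFunction}.
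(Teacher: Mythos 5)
Your proof is correct and is exactly the argument the paper intends: the paper states Theorem \ref{thm:piecewise-SAA} as ``direct'' and omits the proof, and your route---restrict to the probability-one event of Proposition \ref{prop:SingleValued}, then apply Lemma \ref{lem:affineprops} with $M=S$ (affine on each chamber by Lemma \ref{lem:PiecewiseAffine-S_PI}) and $\psi=\langle-\hat{c}_i,\cdot\rangle$ to get that each $y_i$ is a single-valued affine selection on every chamber---is the same machinery the paper itself uses inside the proof of Theorem \ref{thm:piecewise-Bayesian}. The bookkeeping points you flag (attainment of the supremum by compactness of $S(x)$, single-valuedness at a relative-interior point) are exactly the right ones.
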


We close this section by showing that problem \eqref{eq:target-problem-SAA} is a consistent approximation of the original problem \eqref{eq:target-problem-Bayesian}.
\begin{proposition} Let $x^*$ be an optimal solution of problem \eqref{eq:target-problem-SAA} for a given sample of size $N_0$, and let $\nu^*$ be the associated optimal value. Let $\mathcal{S}$ be the set of solutions of problem \eqref{eq:target-problem} and let $\bar{\nu}$ be its optimal value. Then,
\begin{enumerate}
    \item $d(x^*,\mathcal{S}):=\inf_{x\in \mathcal{S}}\|x^*-x\| \xrightarrow{N_0\to\infty} 0$, w.p.1.
    \item $\nu^*\xrightarrow{N_0\to\infty} \bar{\nu}$, w.p.1.
\end{enumerate}
\end{proposition}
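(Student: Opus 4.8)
The plan is to deduce both assertions from a single uniform convergence statement, namely that the SAA objective $\hat{\theta}_{N_0}(x):=\langle d_1,x\rangle+\frac{1}{N_0}\sum_{i=1}^{N_0}\langle d_2,y_i(x)\rangle$ converges uniformly on $X$ to the true objective $\theta(x)=\langle d_1,x\rangle+\E_{\beta_x}[\langle d_2,\cdot\rangle]$ with probability one. Once this is available, the two conclusions are exactly the classical consistency statements for sample average approximation over a compact feasible set (see, e.g., \cite{Shapiro2021Lecture3rd,HomemdeMelloBayraksan2014Survey}), so essentially all the work lies in establishing the uniform convergence, and the chamber complex is what makes it tractable.

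First I would reduce uniform convergence to pointwise convergence on the finite vertex set $\mathscr{V}(D)$. For a \emph{fixed} $x\in X$ the quantities $\langle d_2,y_i(x)\rangle$, $i\in[N_0]$, are i.i.d.\ copies of $\langle d_2,y(x,\omega)\rangle$; since $S(x)\subseteq Y$ and $Y$ is a polytope, these variables are bounded and hence integrable, and by the construction of the vertex-supported belief the law of $y(x,\omega)$ is precisely $\beta_x$, so their common mean equals $\E_{\beta_x}[\langle d_2,\cdot\rangle]$. The strong law of large numbers then gives $\hat{\theta}_{N_0}(x)\to\theta(x)$ w.p.1 for each fixed $x$. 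Applying this at each $v\in\mathscr{V}(D)$ and using the finiteness of $\mathscr{V}(D)$ (so that the union of the exceptional null sets, together with the null sets from Proposition \ref{prop:SingleValued} and Theorem \ref{thm:piecewise-SAA}, remains null) yields $\max_{v\in\mathscr{V}(D)}|\hat{\theta}_{N_0}(v)-\theta(v)|\to 0$ w.p.1.

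Next I would lift this to uniform convergence via the piecewise linear structure provided by Theorems \ref{thm:piecewise-Bayesian} and \ref{thm:piecewise-SAA}. Both $\theta$ and $\hat{\theta}_{N_0}$ are affine on each full-dimensional chamber $K\in\mathscr{K}(D)$, and the vertices of every such $K$ belong to $\mathscr{V}(D)$ by Proposition \ref{prop:MinimalChambersAreExtremePoints}. Writing any $x\in K$ as a convex combination $x=\sum_j\lambda_j v_j$ of the vertices of $K$, affinity gives $|\hat{\theta}_{N_0}(x)-\theta(x)|\le\sum_j\lambda_j|\hat{\theta}_{N_0}(v_j)-\theta(v_j)|\le\max_{v\in\mathscr{V}(D)}|\hat{\theta}_{N_0}(v)-\theta(v)|$. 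Since $\mathscr{K}(D)$ covers $X$, taking the supremum over $x\in X$ produces $\sup_{x\in X}|\hat{\theta}_{N_0}(x)-\theta(x)|\le\max_{v\in\mathscr{V}(D)}|\hat{\theta}_{N_0}(v)-\theta(v)|\to 0$ w.p.1, which is the desired uniform convergence.

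Finally I would close with the standard argument on the probability-one event where this uniform convergence holds. For the value, $|\nu^*-\bar{\nu}|=|\min_X\hat{\theta}_{N_0}-\min_X\theta|\le\sup_X|\hat{\theta}_{N_0}-\theta|\to 0$, proving claim~2. For the solutions, compactness of $X$ ensures that any subsequence of the minimizers has a convergent sub-subsequence $x^*\to\bar{x}$; combining uniform convergence with the continuity of $\theta$ (Theorem \ref{thm:piecewise-Bayesian}) gives $\hat{\theta}_{N_0}(x^*)\to\theta(\bar{x})$, while $\hat{\theta}_{N_0}(x^*)=\nu^*\to\bar{\nu}$, so $\theta(\bar{x})=\bar{\nu}$ and hence $\bar{x}\in\mathcal{S}$; thus every accumulation point of $(x^*)$ lies in $\mathcal{S}$, which is exactly $d(x^*,\mathcal{S})\to 0$. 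The main obstacle is the uniform convergence, and the key observation that defuses it is that the \emph{shared} chamber complex makes both objectives affine over the same finite family of pieces, so uniform control over $X$ collapses to pointwise control at the finitely many chamber vertices.
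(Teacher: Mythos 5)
Your proposal is correct and follows essentially the same route as the paper: pointwise (strong-law) convergence at the finitely many chamber vertices, lifted to uniform convergence on $X$ via the affinity of both objectives on each full-dimensional chamber together with $\ext(K)\subseteq\mathscr{V}(D)$, and then standard SAA consistency. The only difference is cosmetic: the paper closes by citing the consistency theorem in \cite{Shapiro2021Lecture3rd}, whereas you write out that standard value-and-accumulation-point argument explicitly.
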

\begin{proof}
Define the function $f_{N_0}(x):=\frac{1}{N_0}\sum_{i=1}^{N_0} \langle d_2,y_i(x)\rangle$. First, note that, with probability 1, $f_{N_0}(x)$
converges to the function $f(x):=
\E_{\beta_x}[\langle d_2,\cdot\rangle]$ pointwisely as $N_0\to \infty$. Since $\mathscr{V}(D)$ is finite, we have that, with probability 1, $f_{N_0}$ converges to $f$ uniformly on $\mathscr{V}(D)$. That is, for almost every infinite sample $\{ \hat{c}_i(\omega) \}_{i\in\N}$, one has that $\sup_{x\in \mathscr{V}(D)}| f_{N_0}(x) - f(x)|\to 0$ as $N_0\to \infty$.

Consider a full-dimensional chamber $K\in\mathscr{K}(D)$. Then affinity of $f_{N_0}$ and $f$ on $K$ yields that
\[
\sup_{x\in K}| f_{N_0}(x) - f(x)| = \sup_{x\in \ext(K)}| f_{N_0}(x) - f(x)|\leq \sup_{x\in \mathscr{V}(D)}| f_{N_0}(x) - f(x)|,
\]
where the last inequality follows from the inclusion $\ext(K)\subseteq \mathscr{V}(D)$ (see Proposition \ref{prop:MinimalChambersAreExtremePoints}). Thus, with probability 1, the convergence is uniform on $K$. Since there are finitely many full-dimensional chambers and they cover $X$, we conclude that the convergence is, with probability 1, uniform in $X$ as well. Thus from  \cite[Theorem 5.3]{Shapiro2021Lecture3rd} we conclude the desired properties.\qed
\end{proof}


\section{Some illustrative examples\label{sec:Examples}}

Here we provide some academic examples illustrating key ideas and difficulties of the chamber complex. We also revise in detail the bilevel continuous knapsack problem with uncertain costs, which was recently studied in \cite{BuchheimHenkeIrmai2022Knapsack}.

\subsection{Academic examples}

\begin{example}[$\mathscr{V}(D)$ can be arbitrarily large \AS{for fixed dimensions ($2\times 1$)}]
\label{ex:bananawithcuts}
Let $n\in\N$ and fix $\varepsilon\in (0,1)$. For each $i=0,\ldots,n$ define $\alpha_i=\frac{\varepsilon i}{2n^2}(2n-i+1)$ and $\beta_i=1-\frac{i}{n}$. Now consider the following polytope $D_n$ of all points $(x,y)\in \R^2\times\R$ satisfying that $\alpha_i+\beta_i|x_1|\leq    y\leq 1-\alpha_i-\beta_i|x_2|$, for all $i\in\{0,\ldots,n\}$.
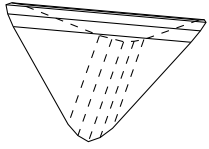
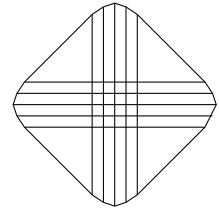
\begin{figure}[t]
\centering
\begin{subfigure}[b]{0.45\textwidth}
\centering
\tdplotsetmaincoords{120}{-10}
\begin{tikzpicture}[scale=1.5,
tdplot_main_coords,
axis/.style={->,gray},
vector/.style={-stealth,very thick}, 
vector guide/.style={dashed,red,thick},
grid/.style={very thin,gray}
]
\def\eps{0.2}; 

\coordinate (v11) at ({1-\eps},\eps,{1-\eps});
\coordinate (v31) at ({-(1-\eps)},\eps,{1-\eps});
\draw (v11) -- (v31);

\coordinate (v12) at ({1-\eps*2/3},{\eps/2},{1-\eps*2/3});
\coordinate (v32) at ({-(1-\eps*2/3)},{\eps/2},{1-\eps*2/3});
\draw (v12) -- (v32);

\coordinate (v13) at ({1-\eps/2},{0},{1-\eps/2});
\coordinate (v33) at ({-(1-\eps/2)},{0},{1-\eps/2});
\draw (v13) -- (v33);

\coordinate (v14) at ({1-\eps*2/3},{-\eps/2},{1-\eps*2/3});
\coordinate (v34) at ({-(1-\eps*2/3)},{-\eps/2},{1-\eps*2/3});
\draw (v14) -- (v34);

\coordinate (v15) at ({1-\eps},{-\eps},{1-\eps});
\coordinate (v35) at ({-(1-\eps)},{-\eps},{1-\eps});
\draw (v15) -- (v35);

\draw (v11) -- (v12) -- (v13) -- (v14) -- (v15);
\draw (v31) -- (v32) -- (v33) -- (v34) -- (v35);

\coordinate (v41) at (\eps,{-(1-\eps)},\eps);
\coordinate (v21) at (\eps,{1-\eps},\eps);
\draw[dashed] (v41) -- (v21);

\coordinate (v42) at ({\eps/2},{-(1-2/3*\eps)},{\eps*2/3});
\coordinate (v22) at ({\eps/2},{1-2/3*\eps},{\eps*2/3});
\draw[dashed] (v42) -- (v22);

\coordinate (v43) at (0,{-(1-\eps/2)},{\eps/2});
\coordinate (v23) at (0,{1-\eps/2},{\eps/2});
\draw[dashed] (v43) -- (v23);

\coordinate (v44) at ({-\eps/2},{-(1-2/3*\eps)},{\eps*2/3});
\coordinate (v24) at ({-\eps/2},{1-2/3*\eps},{\eps*2/3});
\draw[dashed] (v44) -- (v24);

\coordinate (v45) at (-\eps,{-(1-\eps)},\eps);
\coordinate (v25) at (-\eps,{1-\eps},\eps);
\draw[dashed] (v45) -- (v25);

\draw (v11) -- (v21) -- (v22) --(v23) --(v24) --(v25) -- (v31);
\draw[dashed] (v15) -- (v41) -- (v42) -- (v43) -- (v43) -- (v45) -- (v35);

\coordinate (O) at (0,0,-0.7);
\node at (O){};

\end{tikzpicture}
\caption{The polytope $D_2$ from Example \ref{ex:bananawithcuts}}
\end{subfigure}
\hfill
\begin{subfigure}[b]{0.45\textwidth}
\centering
\tdplotsetmaincoords{0}{-90}
\begin{tikzpicture}[scale=1.5,
tdplot_main_coords,
axis/.style={->,gray},
vector/.style={-stealth,very thick}, 
vector guide/.style={dashed,red,thick},
grid/.style={very thin,gray}
]
\def\eps{0.2}; 

\coordinate (v11) at ({1-\eps},\eps,{1-\eps});
\coordinate (v31) at ({-(1-\eps)},\eps,{1-\eps});
\draw (v11) -- (v31);

\coordinate (v12) at ({1-\eps*2/3},{\eps/2},{1-\eps*2/3});
\coordinate (v32) at ({-(1-\eps*2/3)},{\eps/2},{1-\eps*2/3});
\draw (v12) -- (v32);

\coordinate (v13) at ({1-\eps/2},{0},{1-\eps/2});
\coordinate (v33) at ({-(1-\eps/2)},{0},{1-\eps/2});
\draw (v13) -- (v33);

\coordinate (v14) at ({1-\eps*2/3},{-\eps/2},{1-\eps*2/3});
\coordinate (v34) at ({-(1-\eps*2/3)},{-\eps/2},{1-\eps*2/3});
\draw (v14) -- (v34);

\coordinate (v15) at ({1-\eps},{-\eps},{1-\eps});
\coordinate (v35) at ({-(1-\eps)},{-\eps},{1-\eps});
\draw (v15) -- (v35);

\draw (v11) -- (v12) -- (v13) -- (v14) -- (v15);
\draw (v31) -- (v32) -- (v33) -- (v34) -- (v35);

\coordinate (v41) at (\eps,{-(1-\eps)},\eps);
\coordinate (v21) at (\eps,{1-\eps},\eps);
\draw (v41) -- (v21);

\coordinate (v42) at ({\eps/2},{-(1-2/3*\eps)},{\eps*2/3});
\coordinate (v22) at ({\eps/2},{1-2/3*\eps},{\eps*2/3});
\draw (v42) -- (v22);

\coordinate (v43) at (0,{-(1-\eps/2)},{\eps/2});
\coordinate (v23) at (0,{1-\eps/2},{\eps/2});
\draw (v43) -- (v23);

\coordinate (v44) at ({-\eps/2},{-(1-2/3*\eps)},{\eps*2/3});
\coordinate (v24) at ({-\eps/2},{1-2/3*\eps},{\eps*2/3});
\draw (v44) -- (v24);

\coordinate (v45) at (-\eps,{-(1-\eps)},\eps);
\coordinate (v25) at (-\eps,{1-\eps},\eps);
\draw (v45) -- (v25);

\draw (v11) -- (v21) -- (v22) --(v23) --(v24) --(v25) -- (v31);
\draw (v15) -- (v41) -- (v42) -- (v43) -- (v43) -- (v45) -- (v35);


\end{tikzpicture}
  \caption{The chamber complex of $D_2$. 
  }
  \end{subfigure}
  \caption{Polytope of Example \ref{ex:bananawithcuts} whose chamber complex has several vertices that are not projections of vertices.}
  \label{fig:Dwithcuts}
  
\end{figure}

Note that since $\alpha_0=0$ and $\beta_0=1$, the polytope $D_n$ is a subset of $D$ \AS{in Figure \ref{fig:banana}}, but has suffered some cuts. The chamber complex of $D_n$ has $(2n+1)^2$ vertices that are not projections of vertices of the polytope itself. We depict $D_2$ and its chamber complex in Figure \ref{fig:Dwithcuts}.\hfill$\Diamond$
\end{example}

\begin{example}[$\mathscr{V}(D)$ can be exponentially larger than $\mathscr{K}(D)$]
\label{ex:expchambervertices}
Consider a polytope \AS{$\tilde{D}_n$} in $\R^{n}\times\R$, where $\nx = n$ and $\ny = 1$, defined as the convex hull of the bottom hypercube $B = \{x\in\R^n :\| x \|_{\infty} \leq 1\}\times \{ 0\}$ and  the top hypercube $T = \{x\in\R^n : \| x \|_{\infty} \leq 1/2\}\times \{1\}$. The representation of \AS{$\tilde{D}_n$} by linear inequalities can be written as
	\[
	\AS{\tilde{D}_n} = \left\{   (x,y)\ :\ \begin{array}{l}
		0\leq y \leq 1\\
		\frac{1}{2}y - 1\leq x_i \leq 1 - \frac{1}{2}y\quad\forall i\in[n].
	\end{array}
	\right\}
	\]
	
	We call $\AS{\tilde{D}_n}$ the $(n+1)$-truncated Pyramid.  Figure 	\ref{fig:Pyramid2} illustrates the truncated pyramid $\AS{\tilde{D}_2}$ and Figure \ref{fig:ChamberPyramid2} provides its chamber complex in $\R^2$. While it is no longer possible to draw $\AS{\tilde{D}_3}$ as a 4-dimensional object, Figure \ref{fig:ChamberPyramid3} shows the chamber complex in this case.  
	\vspace{0.4cm}	
	
	\begin{minipage}{0.45\textwidth}
		\begin{center}
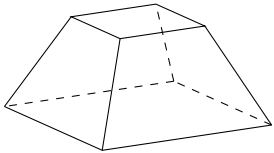

			\tdplotsetmaincoords{75}{150}
			\begin{tikzpicture}[scale=1.3,
				tdplot_main_coords,
				axis/.style={->,thick},
				vector/.style={-stealth,very thick}, 
				vector guide/.style={dashed,red,thick},
				grid/.style={very thin,gray}
				]
				
				\coordinate (O) at (0,0,0);
				
				\draw (1,-1,0)--(1,1,0)--(-1,1,0);
				\draw[dashed] (-1,-1,0)--(1,-1,0);
				\draw[dashed] (-1,1,0)--(-1,-1,0);
				
				\draw (-1/2,-1/2,1)--(1/2,-1/2,1)--(1/2,1/2,1)--(-1/2,1/2,1)--cycle;
				\draw[dashed] (-1,-1,0)--(-1/2,-1/2,1);
				\draw (1,1,0)--(1/2,1/2,1);
				\draw (1,-1,0)--(1/2,-1/2,1);
				\draw (-1,1,0)--(-1/2,1/2,1);
			\end{tikzpicture}
			\captionof{figure}{Truncated Pyramid $\AS{\tilde{D}_2}$\label{fig:Pyramid2}}
		\end{center}
	\end{minipage}
	\begin{minipage}{0.50\textwidth}
		\begin{center}
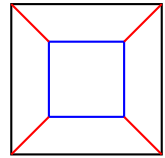

			\begin{tikzpicture}[scale=1]
				\draw[thick] (-1,-1)--(1,-1)--(1,1)--(-1,1)--cycle;
				\draw[blue,thick] (-1/2,-1/2)--(1/2,-1/2)--(1/2,1/2)--(-1/2,1/2)--cycle;
				\draw[red,thick] (-1,-1)--(-1/2,-1/2);
				\draw[red,thick] (1,-1)--(1/2,-1/2);
				\draw[red,thick] (-1,1)--(-1/2,1/2);
				\draw[red,thick] (1,1)--(1/2,1/2);
			\end{tikzpicture}
			\captionof{figure}{Chamber complex of $\AS{\tilde{D}_2}$\label{fig:ChamberPyramid2}}
		\end{center}
	\end{minipage}
	
	\begin{center}
		\tdplotsetmaincoords{75}{150}
		\begin{tikzpicture}[scale=1,
			tdplot_main_coords,
			axis/.style={->,thick},
			vector/.style={-stealth,very thick}, 
			vector guide/.style={dashed,red,thick},
			grid/.style={very thin,gray}
			]
			
			\coordinate (O) at (0,0,0);
			
			\draw[thick] (-1,1,-1)--(1,1,-1) -- (1,1,1)--(-1,1,1)--cycle;
			\draw[thick] (1,-1,-1)--(1,1,-1) -- (1,1,1)--(1,-1,1)--cycle;
			\draw[thick] (1,-1,1)--(-1,-1,1) -- (-1,1,1);
			\draw[thick] (1,-1,-1)--(-1,-1,-1) -- (-1,1,-1);
			\draw[thick] (-1,-1,-1)--(-1,-1,1);
			
			\draw[blue,thick] (-1/2,1/2,-1/2)--(1/2,1/2,-1/2) -- (1/2,1/2,1/2)--(-1/2,1/2,1/2)--cycle;
			\draw[blue,thick] (1/2,-1/2,-1/2)--(1/2,1/2,-1/2) -- (1/2,1/2,1/2)--(1/2,-1/2,1/2)--cycle;
			\draw[blue,thick] (1/2,-1/2,1/2)--(-1/2,-1/2,1/2) -- (-1/2,1/2,1/2);
			\draw[blue,thick] (1/2,-1/2,-1/2)--(-1/2,-1/2,-1/2) -- (-1/2,1/2,-1/2);
			\draw[blue,thick] (-1/2,-1/2,-1/2)--(-1/2,-1/2,1/2);
			
			\draw[red,thick] (-1,-1,-1)--(-1/2,-1/2, -1/2);
			\draw[red,thick] (1,-1,-1)--(1/2,-1/2, -1/2);
			\draw[red,thick] (-1,1,-1)--(-1/2,1/2, -1/2);
			\draw[red,thick] (1,1,-1)--(1/2,1/2, -1/2);
			\draw[red,thick] (-1,-1,1)--(-1/2,-1/2, 1/2);
			\draw[red,thick] (1,-1,1)--(1/2,-1/2, 1/2);
			\draw[red,thick] (-1,1,1)--(-1/2,1/2, 1/2);
			\draw[red,thick] (1,1,1)--(1/2,1/2, 1/2);
		\end{tikzpicture}
		\captionof{figure}{Chamber complex of $\AS{\tilde{D}_3}$\label{fig:ChamberPyramid3}}
	\end{center}
	The $(n+1)$-truncated pyramid $\AS{\tilde{D}_n}$ is an example where the number of vertices of the chamber complex $|\mathscr{V}(\AS{\tilde{D}_n})| = 2^{n+1}$ is exponentially larger than the number of full-dimensional chambers $|\mathscr{K}(\AS{\tilde{D}_n})| = 2n+1$.\hfill$\Diamond$
\end{example}

\subsection{The bilevel continuous knapsack problem}
\label{sec:continuousknapsack}

Let $a = (a_1,\ldots,a_n)\in \R_+^n$ be a weight vector and let $A := \sum_{i=1}^n a_i$. The bilevel continuous knapsack problem associated to the weight vector $a$ is given by

\begin{equation}\label{eq:knapsack}
	BK(a):=\left\{\begin{array}{cl}
		\displaystyle\max_x & -\delta x + \E[d^\top y(x,\omega)]\\
		\\
		s.t. & x\in [L,U] \\
		& y(x,\omega)\text{ solves }\left\{\begin{array}{cl}
			\displaystyle\min_{y} & \langle c(\omega), y\rangle  \\
			s.t. & \begin{array}{l}
				a^\top y \leq x,\\
				y \in [0,1]^{\ny}
			\end{array}
		\end{array}\right.\quad\text{a.s. }\omega\in\Omega,
	\end{array}\right.
\end{equation}
where $\delta\in \R_+$ is the cost of the knapsack capacity (decided by the leader), and $d\in \R^{n}$ is the valorization of the leader over the objects $1$ to $n$. Once the knapsack size $x$ is decided, the follower must choose how much of each (divisible) object to carry with, respecting the capacity constraint $a^{\top}y\leq x$, using as criteria the uncertain cost $c(\omega)$. A detailed study of this problem has been recently done in \cite{BuchheimHenkeIrmai2022Knapsack}. It is shown that the set of chamber vertices is
\begin{equation}\label{eq:VerticesKnapsack}
	\mathscr{V}(D) = \left\{ {\textstyle\sum_{i\in I} a_i\ :\ I\subseteq [n]}  \right\}\cap [L,U],
\end{equation}
under the convention $\sum_{\emptyset}a_i = 0$. Choosing $a\in\R^{n}_+$ adequately, it is possible to obtain different values for every partial sum in \eqref{eq:VerticesKnapsack} (for instance, with $a_i = 2^i$ for all $i\in[n]$). This leads to $|\mathscr{V}(D)| = 2^{n}$, which is exponentially large with respect to the $2n+3$ constraints needed to define $D$. Furthermore, 
$\mathscr{K}(D)$ is given by the intervals between consecutive vertices, and so  $|\mathscr{K}(D)| = 2^{n}-1$. Thus, this is an example where $\mathscr{K}(D)$ is almost as large as $\mathscr{V}(D)$, and both are exponentially large with respect to the data of the problem.

When $a \in \N^n$ (i.e., the weights $(a_1,\ldots,a_n)$ are integers), one can give an overestimation of 
\AS{$\mathscr{V}(D)$} given by 
$\mathscr{V}(D) \subseteq\{0,1,2,\ldots,A\}\cap[L,U]$, where $A = \sum_{i=1}^n a_i$.
Thus, assuming that we have an oracle to evaluate the function $\theta(x) = \E[\langle d_2, y(x,\omega)\rangle  ]-\langle d_1,x\rangle$ for any $x\in [0,A]$ (or that we are solving a sample approximation), then we can solve $BK(a)$ in pseudo-polynomial time by computing the values $\{ \theta(0), \theta(1), \ldots, \theta(A) \}$ (the number $A$ requires $\log(A)$ bits to be represented).


\begin{example}\label{ex:Knapsack2D}
	Let us consider a very simple instance of problem \eqref{eq:knapsack}, where $n = 2$, $a_1 = 1$, $a_2 =2$, and $c$ follows a uniform distribution over $\Sph_2$. The polytope $D$ corresponding to the feasible region of the high-point relaxation of the problem is depicted in Figure \ref{fig:BK}.  
	
	In this case, the chamber vertices are exactly $\mathscr{V}(D) = \{0,1,2,3\}$, and so, the full-dimensional chambers are given by $\mathscr{K}(D) = \{ [0,1], [1,2], [2,3] \}$. The fiber map $S$ in each full-dimensional chamber, together with the normal cones of the extreme points, is depicted in Figure \ref{fig:SlicesAndNormalFans-Knapsack}.
		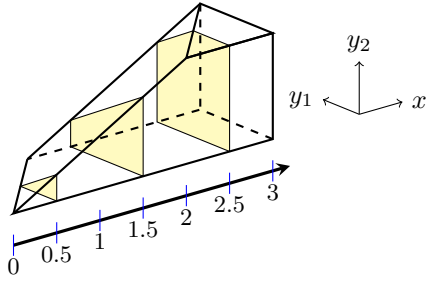
\begin{figure}[t]
		\begin{center}
			\tdplotsetmaincoords{70}{140}
			\begin{tikzpicture}[scale=1.5,
				tdplot_main_coords,
				axis/.style={-stealth,very thick}
				]
				 \filldraw[fill = yellow!30!white] (-.5,0,0)--(-.5,-.5,0)--(-.5,0,.25)--cycle;
				\filldraw[fill = yellow!30!white] (-1.5,0,0)--(-1.5,-1,0)--(-1.5,-1,.25)--(-1.5,0,.75) --cycle;
				\filldraw[fill = yellow!30!white] (-2.5,0,0)--(-2.5,-1,0)--(-2.5,-1,.75)--(-2.5,-0.5,1) --(-2.5,0,1)--cycle;
				
				\draw[thick,dashed] (-3,0,0)--(-3,-1,0)--(-1,-1,0);
				\draw[thick] (-1,-1,0) -- (0,0,0);
				\draw[thick] (0,0,0)--(-2,0,1)--(-3,0,1)--(-3,0,0)--cycle;
				\draw[thick] (-2,0,1)--(-3,-1,1)--(-3,0,1)--cycle;
				\draw[thick] (-3,-1,1)--(-1,-1,0)--cycle;
				\draw[dashed,thick] (-3,-1,1)--(-3,-1,0);
				
                \draw[axis] (0,0,-0.3)--(-3.2,0,-0.3);
                \foreach \x in {0,0.5,...,3}{\draw[thin]  [color=blue](-\x,0,-0.2)--(-\x,0,-0.4); \node at (-\x,0,-0.5) {\small \x}; }

				\draw[->] (-4,0,0) -- (-4.5,0,0) node[right]  {$x$};
				\draw[->] (-4,0,0) -- (-4,-0.5,0) node[left]  {$y_1$};
				\draw[->] (-4,0,0) -- (-4,0,0.5) node[above]  {$y_2$};
				\node at (1,0,0){};			
			\end{tikzpicture}
			\captionof{figure}{Polytope $D$. In yellow, $S(0.5)$, $S(1.5)$ and $S(2.5)$.}
			\label{fig:BK}
		\end{center}
	\end{figure}
		\begin{figure}[t]
		\begin{minipage}{0.3\textwidth}
			\centering
			\begin{tikzpicture}[scale = 1.3]
				\filldraw[fill = yellow!30!white] (0,0)--(.5,0)--(0,.25)--cycle;
				\node at (0,1.4){};
				
				\draw (-0.4,0.25) coordinate (c1)
				-- (0,.25) coordinate (b1)
				-- (0.4/2.24, 0.8/2.24+ 0.25) coordinate (a1)
				pic["\tiny $\tfrac{\pi}{2} + \alpha$", draw=orange, <->, angle eccentricity=1.2, angle radius=0.6cm]
				{angle=a1--b1--c1};
				\draw (0.4/2.24 +0.5, 0.8/2.24) coordinate (c2)
				-- (0.5,0) coordinate (b2)
				-- (0.5,-0.4) coordinate (a2)
				pic["\tiny $\pi - \alpha$", draw=orange, <->, angle eccentricity=1.2, angle radius=0.6cm]
				{angle=a2--b2--c2};
				\draw (0,-.4) coordinate (c3)
				-- (-0,0) coordinate (b3)
				-- (-0.4,0) coordinate (a3)
				pic["\tiny $\tfrac{\pi}{2}$", draw=orange, <->, angle eccentricity=1.2, angle radius=0.6cm]
				{angle=a3--b3--c3};
			\end{tikzpicture}
		\caption*{(a)}
		\end{minipage}
		\centering
		\begin{minipage}{0.3\textwidth}
		\centering
		\begin{tikzpicture}[scale = 1.3]
				\draw[fill = yellow!30!white] (0,0)--(1,0)--(1,.25)--(0,.75) --cycle;
				\node at (0,1.4){};
				
				\draw (-0.4,0.75) coordinate (c1)
				-- (0,.75) coordinate (b1)
				-- (0.4/2.24, 0.8/2.24+ 0.75) coordinate (a1)
				pic["\tiny $\tfrac{\pi}{2} + \alpha$", draw=orange, <->, angle eccentricity=1.2, angle radius=0.6cm]
				{angle=a1--b1--c1};
				\draw (0.4/2.24 +1, 0.8/2.24 + 0.25) coordinate (c2)
				-- (1,0.25) coordinate (b2)
				-- (1+0.4,0.25) coordinate (a2)
				pic["\tiny $\tfrac{\pi}{2} - \alpha$", draw=orange, <->, angle eccentricity=1.2, angle radius=0.6cm]
				{angle=a2--b2--c2};
				
				\draw (1.4,0) coordinate (c3)
				-- (1,0) coordinate (b3)
				-- (1,-0.4) coordinate (a3)
				pic["\tiny $\tfrac{\pi}{2}$", draw=orange, <->, angle eccentricity=1.2, angle radius=0.6cm]
				{angle=a3--b3--c3};
				
				\draw (0,-.4) coordinate (c4)
				-- (0,0) coordinate (b4)
				-- (-0.4,0) coordinate (a4)
				pic["\tiny $\tfrac{\pi}{2}$", draw=orange, <->, angle eccentricity=1.2, angle radius=0.6cm]
				{angle=a4--b4--c4};
		\end{tikzpicture}
	\caption*{(b)}
		\end{minipage}
		\begin{minipage}{0.3\textwidth}
		\centering
		\begin{tikzpicture}[scale = 1.3]
				\draw[fill = yellow!30!white] (0,0)--(1,0)--(1,.75)--(0.5,1) --(0,1)--cycle;

				\draw (0.5,1+0.4) coordinate (c1)
				-- (0.5,1) coordinate (b1)
				-- (0.4/2.24 + 0.5, 0.8/2.24+ 1) coordinate (a1)
				pic["\tiny $\alpha$", draw=orange, <->, angle eccentricity=1.2, angle radius=0.6cm]
				{angle=a1--b1--c1};
				\draw (0.4/2.24 +1, 0.8/2.24 + 0.75) coordinate (c2)
				-- (1,0.75) coordinate (b2)
				-- (1+0.4,0.75) coordinate (a2)
				pic["\tiny $\tfrac{\pi}{2} - \alpha$", draw=orange, <->, angle eccentricity=1.2, angle radius=0.6cm]
				{angle=a2--b2--c2};
				
				\draw (1.4,0) coordinate (c3)
				-- (1,0) coordinate (b3)
				-- (1,-0.4) coordinate (a3)
				pic["\tiny $\tfrac{\pi}{2}$", draw=orange, <->, angle eccentricity=1.2, angle radius=0.6cm]
				{angle=a3--b3--c3};
				
				\draw (-0.4,1) coordinate (c4)
				-- (0,1) coordinate (b4)
				-- (0,1+0.4) coordinate (a4)
				pic["\tiny $\tfrac{\pi}{2}$", draw=orange, <->, angle eccentricity=1.2, angle radius=0.6cm]
				{angle=a4--b4--c4};
				
				\draw (0,-.4) coordinate (c4)
				-- (-0,0) coordinate (b4)
				-- (-0.4,0) coordinate (a4)
				pic["\tiny $\tfrac{\pi}{2}$", draw=orange, <->, angle eccentricity=1.2, angle radius=0.6cm]
				{angle=a4--b4--c4};
		\end{tikzpicture}
	\caption*{(c)}
		\end{minipage}
	\caption{Slices $S(0.5)$, $S(1.5)$ and $S(2.5)$, and normal cones. $\alpha = \arccos(1/\sqrt{5})$.}\label{fig:SlicesAndNormalFans-Knapsack}
	\end{figure}
	
	Setting $\alpha = \arccos(1/\sqrt{5})$, it is possible to show that the centroid map $\mathfrak{b}$ of the fiber map $S$ is given by
	\[
		\mathfrak{b}(x) = \begin{cases}
			{\small\left(\frac{1}{4} + \frac{\alpha}{2\pi}\right) \begin{bmatrix}
		0\\ x/2
	\end{bmatrix} + \left(\frac{1}{4} + \frac{\pi/2 - \alpha}{2\pi}\right) \begin{bmatrix}
		x\\ 0 
	\end{bmatrix}}\quad&\text{ if }x\in [0,1],\\
	{\small \left(\frac{1}{4} + \frac{\alpha}{2\pi}\right) \begin{bmatrix}
		0\\ x/2
	\end{bmatrix} + \frac{1}{4}\begin{bmatrix}
		1\\ 0 
	\end{bmatrix}  + \frac{\pi/2 - \alpha}{2\pi} \begin{bmatrix}
		1\\ (x-1)/2 
	\end{bmatrix}} &\text{ if }x\in[1,2],\\
{\small\frac{1}{4}\begin{bmatrix}
	0\\ 1
\end{bmatrix} + \frac{\alpha}{2\pi} \begin{bmatrix}
	x-2\\ 1
\end{bmatrix} + \frac{1}{4}\begin{bmatrix}
	1\\ 0 
\end{bmatrix}  + \frac{\pi/2 - \alpha}{2\pi} \begin{bmatrix}
	1\\ (x-1)/2 
\end{bmatrix}}& \text{ if }x\in[2,3].
\end{cases}	
	\]
	This computation allows us to explicitly determine $\theta(x)$, simply by noting that $\theta(x) =  d^{\top} \mathfrak{b}(x) - \delta x$. 
 For appropriate values of $\delta$ and $d$, it is possible to set the minimum of $\theta$ to be attained at any of the chamber vertices.\hfill$\Diamond$
\end{example}


\section{Enumeration algorithm \label{sec:Algorithms}}

The rest of the work is focused on algorithms to solve problem \eqref{eq:target-problem-Bayesian} in the case when we can evaluate the objective function $x\mapsto\langle d_1,x\rangle + \E_{\beta_x}[\langle d_2,\cdot\rangle]$, or to solve problem \eqref{eq:target-problem-SAA}, otherwise. The key observation is that, thanks to Theorems \ref{thm:piecewise-Bayesian} and \ref{thm:piecewise-SAA}, both problems have the form
\begin{equation}\label{eq:target-problem-generic}
    \min_{x\in X} \theta(x),
\end{equation}
where $\theta: X\to \R$ is a continuous function, piecewise linear over the chamber complex $\mathscr{C}(D)$. Thus, we will provide algorithms to solve this generic problem. 

In this setting, Corollary \ref{cor:PiecewiseLinearHasSolution} gives us a natural strategy to solve problem \eqref{eq:target-problem-generic}. Compute the chamber vertices $\mathscr{V}(D)$ and evaluate the corresponding objective function $\theta$ at each one of them. In this section, we provide an enumeration algorithm to compute $\mathscr{V}(D)$ by sequentially solving mixed-integer programming problems which are formulated using $\mathscr{F}_{\leq\nx}$.

\begin{remark}\label{remark:V}
    Computing $\mathscr{V}(D)$ is at least as hard as computing all vertices of a polytope. Indeed, given an arbitrary (full-dimensional) polytope $P\subseteq \mathbb{R}^n$, one can consider 
    \[
    D:=\{(x,y)\in \mathbb{R}^{n+1} \, :\, x\in P,\, 0\leq y \leq 1\} = P\times[0,1].
    \]
    We observe that $\mathscr{V}(D)$ corresponds exactly to the extreme points of $P$. This follows since all faces of $D$ are either parallel to $P$ or orthogonal to $P$. In both cases, the projection of faces of $D$ are also faces of $P$ (see Figure \ref{fig:Lifting}). 
    \begin{figure}[t]
    \centering
			\tdplotsetmaincoords{75}{150}
			\begin{tikzpicture}[scale=1,
				tdplot_main_coords,
				axis/.style={->,thick},
				vector/.style={-stealth,very thick}, 
				vector guide/.style={dashed,red,thick},
				grid/.style={very thin,gray}
				]
				
				\coordinate (O) at (0,0,0);
				
				\fill[color=black!10!white] (-2,0,0)--(-1,-1,0)--(1,-1,0)
				--(2,0,0)--(1,1,0)--(-1,1,0)--cycle;
				\draw (2,0,0)--(1,1,0)--(-1,1,0)--(-2,0,0);
				\draw[dashed] (-2,0,0)--(-1,-1,0)--(1,-1,0)--(2,0,0);
				\node at (O) {$P$};
				\draw (-2,0,1)--(-1,-1,1)--(1,-1,1)
				--(2,0,1)--(1,1,1)--(-1,1,1)--cycle;
				\draw (2,0,0) -- (2,0,1);
				\draw (1,1,0) -- (1,1,1);
				\draw (-1,1,0) -- (-1,1,1); 
				\draw (-2,0,0) -- (-2,0,1); 
				\draw[dashed] (-1,-1,0) -- (-1,-1,1);
				\draw[dashed] (1,-1,0) -- (1,-1,1);
			\end{tikzpicture}
			\caption{Illustration of $D$ (the volume) as direct lifting of $P$ (in gray).}\label{fig:Lifting}
    \end{figure}
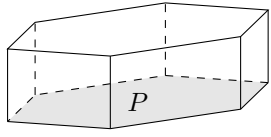
    
    To the best of our knowledge, the complexity of finding all vertices of a polytope $P$ is currently unknown. However, for a polyhedron $P$ (not necessarily bounded) it is known that it is NP-complete to decide, given a subset of vertices of $P$, if there is a new vertex of $P$ to add to the subset \cite{khachiyan2009generating}.
\end{remark}


\subsection{Mixed-integer formulation}

In order to compute chambers, Definition \ref{def:Chambers} tells us that we need access to the Face Lattice $\mathscr{F}$. However, Proposition \ref{prop:RequiredFaces} improves this requirement reducing $\mathscr{F}$ only to $\mathscr{F}_{\leq\nx}$. Recalling the representation \eqref{eq:FacesToSets}, we have that each face $F\in\mathscr{F}_{n}$ is represented by a set of constraints $J\subseteq [m]$ with $|J| = \codim(F) = (\nx+\ny) - n$. Thus,
\begin{equation}
|\mathscr{F}_{\leq\nx}| = \sum_{n = 0}^{\nx} |\mathscr{F}_n| \leq \sum_{n=\ny}^{\nx+\ny} {m\choose n} = O(m^{\nx+\ny}).
\end{equation}

Given a chamber $C\in\mathscr{C}(D)$ we define its \emph{label} as
\begin{equation}\label{eq:DefLabelSet}
\ell(C):=\left\{ F\in \mathscr{F}_{\leq\nx}\ :\ C\subseteq \pi(F) \right\}
\end{equation}
and similarly for a point $x\in X$ we define $\ell(x):=\ell(\sigma(x))$. Labels are a one-to-one representation of chambers, by noting that for every chamber $C\in\mathscr{C}(D)$, we can write $C = \bigcap_{F\in\ell(C)} \pi(F)$.

\begin{lemma}\label{lemma:maximallabels} For every $C_1,C_2\in \mathscr{C}(D)$ one has that
	\begin{equation}\label{eq:labelsubset}
	C_1\subseteq C_2 \iff \ell(C_1)\supseteq \ell(C_2).
	\end{equation}
	Thus, the set $\{ \ell(v)\ :\ v\in \mathscr{V}(D) \}$ corresponds to the maximal elements of $\{ \ell(C)\ :\ C\in \mathscr{C}(D) \}$, inclusion-wise.
\end{lemma}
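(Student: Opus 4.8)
The plan is to prove the equivalence \eqref{eq:labelsubset} first, and then derive the statement about maximal labels as a direct consequence. For the equivalence, I would establish the two implications separately. The forward direction ($C_1 \subseteq C_2 \implies \ell(C_1) \supseteq \ell(C_2)$) should be nearly immediate from the definition \eqref{eq:DefLabelSet}: if $F \in \ell(C_2)$ then $C_2 \subseteq \pi(F)$, and combining with $C_1 \subseteq C_2$ gives $C_1 \subseteq \pi(F)$, hence $F \in \ell(C_1)$. No geometric subtlety is needed here.

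The reverse direction ($\ell(C_1) \supseteq \ell(C_2) \implies C_1 \subseteq C_2$) is where the real content lies, and I expect it to be the main obstacle. The key is the remark made just before the lemma: every chamber can be recovered from its label via $C = \bigcap_{F \in \ell(C)} \pi(F)$. Assuming this representation, I would write $C_1 = \bigcap_{F \in \ell(C_1)} \pi(F)$ and $C_2 = \bigcap_{F \in \ell(C_2)} \pi(F)$. Since $\ell(C_1) \supseteq \ell(C_2)$, the intersection defining $C_1$ ranges over a superset of the faces defining $C_2$, so we intersect over more sets and obtain a smaller region, giving $C_1 \subseteq C_2$. The delicate point to justify carefully is precisely the representation $C = \bigcap_{F \in \ell(C)} \pi(F)$ for an arbitrary chamber $C$ (not just $C = \sigma(x)$); this follows by writing $C = \sigma(x)$ for any $x \in \ri(C)$ (using Proposition \ref{prop:RequiredFaces}.2) and comparing the defining intersection in \eqref{eq:ChamberOfAPoint} with the label, noting that $x \in \pi(F)$ holds for exactly those $F$ with $C = \sigma(x) \subseteq \pi(F)$, i.e. the members of $\ell(C)$.

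For the second assertion, I would argue that the map $C \mapsto \ell(C)$ is an order-reversing bijection between $\mathscr{C}(D)$ (ordered by inclusion) and $\{\ell(C) : C \in \mathscr{C}(D)\}$ (ordered by inclusion), which is exactly the content of the just-proved equivalence together with injectivity of $\ell$ (labels are a one-to-one representation of chambers). Under an order-reversing bijection, the minimal elements of the source correspond to the maximal elements of the target. The minimal elements of $\mathscr{C}(D)$ under inclusion are precisely the zero-dimensional chambers, i.e. the singletons $\{v\}$ with $v \in \mathscr{V}(D)$, since every chamber contains a vertex chamber and no chamber is strictly contained in a singleton. Therefore $\{\ell(v) : v \in \mathscr{V}(D)\}$ is exactly the set of maximal labels, as claimed. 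The only point requiring care is confirming that minimality in $\mathscr{C}(D)$ is characterized by the zero-dimensional chambers, which is clear since a chamber of positive dimension always properly contains one of its vertices (itself a chamber by the polyhedral complex property).
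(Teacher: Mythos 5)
Your proposal is correct and follows essentially the same route as the paper: the forward implication directly from the definition of the label, the reverse implication via the representation $C=\bigcap_{F\in\ell(C)}\pi(F)$ (so that intersecting over the larger label yields the smaller chamber), and the final assertion by matching minimal chambers with maximal labels. Your additional care in justifying the representation through Proposition \ref{prop:RequiredFaces} and in verifying that the minimal chambers are exactly the singletons $\{v\}$, $v\in\mathscr{V}(D)$, only fills in details the paper leaves implicit.
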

\begin{proof}
The implication $\Rightarrow$ follows easily from the definition. For the $\Leftarrow$ implication if $\ell(C_1)\supseteq \ell(C_2)$, then
\[
C_1=\bigcap_{F\in \ell(C_1)} \pi(F)\subseteq \bigcap_{F\in \ell(C_2)} \pi(F)=C_2.
\]
The final part is direct from \eqref{eq:labelsubset}\qed
\end{proof}

The next proposition shows how to compute the label of a point $x\in X$, without the need of computing the projections of faces. We do this using a single LP formulation.
\begin{proposition}\label{prop:ComputingChambers} 
Let $x\in X$ and consider an optimal solution $(y_F^*\ : \ F\in \mathscr{F}_{\leq\nx})$ of the following LP problem 
	\begin{equation}\label{eq:PLForFindingLabel}
		\left\{\begin{array}{cl}
			\displaystyle\max_{(y_F)}\,&  \sum_{F\in \mathscr{F}_{\leq\nx}} g_F(x,y_F)\\
			s.t. & By_F \leq b - Ax,\quad\forall F\in \mathscr{F}_{\leq\nx}.
		\end{array}\right.
	\end{equation}	
Then
\begin{equation}
	\ell(x) = \left\{ F\in  \mathscr{F}_{\leq\nx}\ :\ g_F(x,y_F^*) = 0 \right\}.	
\end{equation}
\end{proposition}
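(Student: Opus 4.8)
The plan is to reduce the claimed identity to a statement about \emph{which projected faces contain $x$}, and then to exploit the fact that the LP \eqref{eq:PLForFindingLabel} is completely separable across the faces $F\in\mathscr{F}_{\leq\nx}$. First I would rewrite the label in a more convenient form. Since $x\in\sigma(x)$ always holds, any $F$ with $\sigma(x)\subseteq\pi(F)$ automatically satisfies $x\in\pi(F)$; conversely, if $x\in\pi(F)$ for some $F\in\mathscr{F}_{\leq\nx}$, then $\pi(F)$ is one of the sets appearing in the intersection defining $\sigma(x)$ via Proposition \ref{prop:RequiredFaces}(1), so $\sigma(x)\subseteq\pi(F)$. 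This gives the equivalence $\ell(x)=\{F\in\mathscr{F}_{\leq\nx}\ :\ x\in\pi(F)\}$, which is the characterization I will match against the optimality condition.

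Next I would observe that the objective of \eqref{eq:PLForFindingLabel} is a sum indexed by $F$, while each constraint $By_F\leq b-Ax$ constrains only its own variable $y_F$, with no coupling across faces. Hence the LP decomposes into $|\mathscr{F}_{\leq\nx}|$ independent subproblems, and an optimal tuple $(y_F^*)$ is precisely one in which each $y_F^*$ maximizes $g_F(x,\cdot)$ over the feasible fiber $\{y\ :\ By\leq b-Ax\}=S(x)$. Because $x\in X$ this fiber is nonempty and compact, so each subproblem attains its maximum. I would then compute this maximum: writing $g_F=g_J=\sum_{j\in J}g_j$ for the chosen $J$ with $F=D\cap[g_J=0]$, each summand satisfies $g_j\leq 0$ on $D$, so $g_F\leq 0$ on $D$ and the subproblem's optimal value is at most $0$. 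A sum of nonpositive terms vanishes exactly when every term vanishes, so $g_F(x,y_F)=0$ for some $y_F\in S(x)$ if and only if $g_j(x,y_F)=0$ for all $j\in J$, i.e. $(x,y_F)\in D\cap[g_J=0]=F$, which is equivalent to $x\in\pi(F)$. Thus the $F$-subproblem has optimal value $0$ when $x\in\pi(F)$ and a strictly negative value otherwise.

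Finally I would combine the two parts: at the optimum $g_F(x,y_F^*)$ equals the optimal value of the $F$-subproblem, so $g_F(x,y_F^*)=0$ holds precisely when $x\in\pi(F)$, which by the first paragraph is exactly the condition $F\in\ell(x)$. I do not expect a genuine obstacle here; the only point requiring care is the sign bookkeeping, namely recognizing that $g_F=\sum_{j\in J}g_j$ is globally nonpositive on $D$ with equality characterizing membership in $F$. This is what turns the separable linear program into a collection of independent \emph{membership tests} $x\in\pi(F)$, and it is the key idea that makes the optimality pattern coincide with the label $\ell(x)$.
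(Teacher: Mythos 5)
Your proof is correct and follows essentially the same route as the paper's: it establishes the equivalence $F\in\ell(x)\iff x\in\pi(F)\iff\exists y_F\in S(x)$ with $g_F(x,y_F)=0$, and then uses the separability of the LP across the uncoupled variables $y_F$ to read off the label from the optimal solution. Your version merely makes explicit the sign argument ($g_F\leq 0$ on $D$ with equality characterizing membership in $F$) that the paper compresses into a citation of \eqref{eq:FacesToSets}.
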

\begin{proof}
By Proposition \ref{prop:RequiredFaces}, $x\in \ri(\sigma(x))$ and then for each $F\in\mathscr{F}_{\leq\nx}$ we have
\begin{align*}
        \sigma(x)\subseteq \pi(F) &\iff x\in\pi(F)\\
        &\iff \exists y^*_F\in S(x) \text{ such that }g_F(x,y_F^*) = 0
\end{align*}
where the last equivalence comes from \eqref{eq:FacesToSets}.
Since $x$ is fixed, and hence there are no coupling constraints among the variables $y_F$, $F\in \mathscr{F}_{\leq\nx}$, it follows that

any solution $(y_F^*\ : \ F\in \mathscr{F}_{\leq\nx})$ of \eqref{eq:PLForFindingLabel} must satisfy that $g_F(x,y_F^*) = 0$ for all faces in $\ell(x)$, and only for those faces.\qed
\end{proof}

Using these results, we can generate an element of $\mathscr{V}(D)$ through 
a mixed-integer linear formulation that aims at finding a maximal element of $\{ \ell(C)\ :\ C\in \mathscr{C}(D) \}$.
The following formulation achieves this: 
\begin{subequations}\label{eq:exactformulation}
\begin{align}
\max_{z,x,y} \quad & \sum_{F\in \mathscr{F}_{\leq\nx}} z_F \\
\mbox{s.t.} \quad & Ax + By_F \leq b && \forall F\in \mathscr{F}_{\leq\nx} \\
& A_Fx + B_Fy_F \geq b_F - M_F (1-z_F) && \forall F\in \mathscr{F}_{\leq\nx} \\ 
& z_F \in \{0,1\} && \forall F\in \mathscr{F}_{\leq\nx} 
\end{align}
\end{subequations}
Here, $y$ and $z$ stand for the vectors $(y_F\ :\ F\in\mathscr{F}_{\leq \nx})$ and $(z_F\ :\ F\in\mathscr{F}_{\leq \nx})$, respectively. For each $F\in \mathscr{F}_{\leq \nx}$, $A_F$, $B_F$ and $b_F$ are submatrices of $A$, $B$ and $b$ such that $F =\{ (x,y)\in D\ :\ A_Fx + B_Fy = b_F \}$, $M$ is a vector of $m$ positive values such that $A_{i} x + B_{i} y - b_i \geq -M_i$, for all $(x,y)\in D$, and $M_F$ the corresponding subvector, matching the indices of $b_F$. The vector $M$ is well defined when $D$ is a polytope, and can be easily computed using $m$ linear programs.
Formulation \eqref{eq:exactformulation} tries to ``activate'' (with $z_F=1$) as many
faces as possible keeping non-empty the intersection of their projection.

Let $(z^*, x^*, y^*)$ be an optimal solution of \eqref{eq:exactformulation}. Clearly,
$x^*$ is an element of $\mathscr{V}(D)$, thus, we can collect it
and focus on generating a new element of $\mathscr{V}(D)$. Noting that $\ell(x^*) =\{F \in \mathscr{F}_{\leq\nx}\, :\, z^*_F = 1\}$, we see that such a new element can be obtained by adding the following inequality constraint to
\eqref{eq:exactformulation}:
\begin{equation}\label{eq:nogoodcut}
    \sum_{F\in \mathscr{F}_{\leq\nx}\, :\, z_F^* = 0} z_F \geq 1.
\end{equation}
Since $(z^*, x^*, y^*)$ induced a maximal element of 
$\{ \ell(C)\ :\ C\in \mathscr{C}(D) \}$, we can easily see that constraint \eqref{eq:nogoodcut}
is removing only $x^*$ of $\mathscr{V}(D)$ from \eqref{eq:exactformulation}.

This procedure can be iterated until the optimization problem becomes infeasible; however, in order to avoid detecting infeasibility in our
computational implementation, we add a new binary variable $s$ that can relax
\eqref{eq:nogoodcut} when needed. Under these considerations, we present the precise model we use. Suppose we have generated a set $V\subseteq \mathscr{V}(D)$. We may generate an element of $\mathscr{V}(D)\setminus V$ or determine that $V= \mathscr{V}(D)$ using the following optimization
problem:
\begin{subequations}\label{eq:exactformulation_final}
\begin{align}
\max_{z,s,x,y} \quad & \sum_{F\in \mathscr{F}_{\leq\nx}} z_F \\
\mbox{s.t.} \quad & Ax + By_F \leq b && \forall F\in \mathscr{F}_{\leq\nx} \label{con:D} \\
&  A_Fx + B_Fy_F \geq b_F - M_F (1-z_F) && \forall F\in \mathscr{F}_{\leq\nx} \label{con:bigM} \\
& s+\sum_{F\not\in \ell(v)} z_F \geq 1  && \forall v\in V \label{con:slb} \\
& \sum_{F\in \mathscr{F}_{\leq\nx}} z_F \leq |\mathscr{F}_{\leq\nx}|(1-s)  \label{con:sub} \\
& z_F \in \{0,1\} && \forall F\in \mathscr{F}_{\leq\nx} \\
&s \in \{0,1\}
\end{align}
\end{subequations}

\begin{lemma}\label{lem:CorrectnessEnumerative}
    Problem \eqref{eq:exactformulation_final} is feasible provided that $D\neq \emptyset$. Moreover, in an optimal solution 
    \((z^*,s^*,x^*,y^*)\), then one (and only one) of the following hold
    \begin{itemize}
    \item $s^* = 0$ and $x^*\in \mathscr{V}(D)\setminus V$.
    \item $s^* = 1$ and $\mathscr{V}(D) = V$.
    \end{itemize}
\end{lemma}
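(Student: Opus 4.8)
The plan is to translate problem \eqref{eq:exactformulation_final} into a statement about the inclusion order of chamber labels, for which Lemma \ref{lemma:maximallabels} is the decisive tool, and then to read off the two cases according to whether $V$ still misses a chamber vertex. Feasibility is the easy part: whenever $D\neq\emptyset$, pick $(x,y)\in D$ and set $s=1$, $z_F=0$ and $y_F=y$ for every $F\in\mathscr{F}_{\leq\nx}$. Then \eqref{con:D} holds because $(x,y)\in D$, \eqref{con:bigM} holds because $M$ is chosen so that $A_ix+B_iy-b_i\geq -M_i$ on $D$, and \eqref{con:slb}--\eqref{con:sub} hold trivially since $s=1$ sets their right-hand sides to $1$ and $0$. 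Hence the feasible set is nonempty and always contains a point of objective value $0$.

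Next I would set up the dictionary between feasible points and labels. For any feasible $(z,s,x,y)$, constraint \eqref{con:D} forces $(x,y_F)\in D$, so $x\in X$; and $z_F=1$ combined with \eqref{con:D} and \eqref{con:bigM} forces $A_Fx+B_Fy_F=b_F$, i.e. $(x,y_F)\in F$ and thus $x\in\pi(F)$, which by Proposition \ref{prop:RequiredFaces} (as in the proof of Proposition \ref{prop:ComputingChambers}) means $F\in\ell(x)$. Therefore $\{F:z_F=1\}\subseteq\ell(x)$, while conversely every $F\in\ell(x)$ can be activated by choosing $y_F$ with $(x,y_F)\in F$. Since the objective counts active faces and raising any $z_F$ from $0$ to $1$ never harms the $\geq$-constraints \eqref{con:slb}, at any optimum with $s=0$ one must have $\{F:z_F^*=1\}=\ell(x^*)$, so the objective equals $|\ell(x^*)|$; whereas $s=1$ forces objective $0$ through \eqref{con:sub}. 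As labels are always nonempty (apply the reduction of Proposition \ref{prop:RequiredFaces} to the improper face $D$), the best $s=0$ value is strictly positive whenever an $s=0$ point exists.

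The two cases then follow from the order structure of labels. If $V\subsetneq\mathscr{V}(D)$, pick $v^*\in\mathscr{V}(D)\setminus V$; since distinct chamber vertices have incomparable maximal labels (Lemma \ref{lemma:maximallabels}), $\ell(v^*)\not\subseteq\ell(w)$ for all $w\in V$, so activating $\ell(v^*)$ at $x=v^*$ is feasible with $s=0$ and positive objective, forcing $s^*=0$. To see the optimal $x^*$ is genuinely a vertex, suppose $\sigma(x^*)$ had positive dimension; being a bounded chamber it contains a vertex $v$ with $\{v\}\subsetneq\sigma(x^*)$, so $\ell(v)\supsetneq\ell(x^*)$ by Lemma \ref{lemma:maximallabels}. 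Because $\{F:z_F^*=1\}\subseteq\ell(x^*)\subseteq\ell(v)$, moving to $x=v$ and activating $\ell(v)$ keeps \eqref{con:slb} satisfied but strictly increases the objective, a contradiction; hence $x^*\in\mathscr{V}(D)$, and \eqref{con:slb} with $w=x^*$ rules out $x^*\in V$. Conversely, if $V=\mathscr{V}(D)$, then for any feasible $x$ the chamber $\sigma(x)$ contains some vertex $v_0\in V$, so $\ell(x)\subseteq\ell(v_0)$ and \eqref{con:slb} for $w=v_0$ cannot hold with $s=0$; thus every feasible point has $s=1$, forcing $s^*=1$. The two bullets are mutually exclusive since they correspond exactly to $V\neq\mathscr{V}(D)$ and $V=\mathscr{V}(D)$.

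The main obstacle I anticipate is precisely the step showing that an optimal $x^*$ with $s^*=0$ is a genuine chamber vertex, rather than merely a point whose label happens to be cardinality-maximal: this requires combining the polyhedral-complex structure (a vertex always sits inside $\sigma(x^*)$, yielding a strictly larger label via Lemma \ref{lemma:maximallabels}) with the monotonicity of the no-good constraint \eqref{con:slb} under activating more faces, so that the improving move to $v$ remains feasible. The remaining verifications---the big-$M$ calibration in \eqref{con:bigM} and the nonemptiness of labels---are routine once Propositions \ref{prop:RequiredFaces} and \ref{prop:ComputingChambers} are in hand.
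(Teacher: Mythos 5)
Your proof is correct and follows essentially the same route as the paper's: feasibility via the trivial $s=1$ point, the dictionary between activated faces and labels, and Lemma \ref{lemma:maximallabels} to identify chamber vertices with maximal labels. You simply spell out in more detail the step the paper compresses into ``follows from optimality and Lemma \ref{lemma:maximallabels}'' (moving from $x^*$ to a vertex of $\sigma(x^*)$ strictly enlarges the label while preserving \eqref{con:slb}), which is a welcome elaboration rather than a different argument.
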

\begin{proof}
Let us first show that \eqref{eq:exactformulation_final} is feasible. Since $D$ is nonempty, there exists a point $(\bar{x},\bar{y})\in D$. Then, we can consider $\hat{s} = 1$, $\hat{x} = \bar{x}$, and, for each $F\in \mathscr{F}_{\leq\nx}$, $\hat{z}_F = 0$ and $\hat{y}_F = \bar{y}$. It is easy to verify that  $(\hat{z},\hat{s},\hat{x},\hat{y})$ is feasible.

For the second part of the statement, we begin by tackling the case of $s^* = 0$. By constraint \eqref{con:slb}, we know that $x^*\not\in V$ since, for each $v\in V$, $z_F^* = 1$ for some $F\not\in \ell(v)$ (recall that $\ell(v)$ is the 
set of all faces whose intersection of projections is $\{v\}$). The fact that $x^*\in \mathscr{V}(D)$
follows from the optimality of the solution and Lemma \ref{lemma:maximallabels}, as constraint \eqref{con:slb} only removes elements of $V$.

Now assume $s^* = 1$. Constraint \eqref{con:sub} implies that $z^*_F = 0$, $\forall F\in\mathscr{F}_{\leq\nx}$. If there were an element $\tilde{x}\in \mathscr{V}(D)\setminus V$, we could clearly set $\tilde{z}_F = 1$ $\forall F \in \ell(\tilde{x})$ and find $\tilde{y}_F$ according to \eqref{eq:PLForFindingLabel}. The vector $(\tilde{z}, 0,\tilde{x},\tilde{y}_F)$ would be feasible and, since $\ell(\tilde{x})$ must be nonempty,
\[\sum_{F\in \mathscr{F}_{\leq\nx}} z^*_F = 0 < \sum_{F\in \mathscr{F}_{\leq\nx}} \tilde{z}_F  \]
This contradicts the optimality of \((z^*,s^*,x^*,y^*)\), thus $\mathscr{V}(D) = V$.\qed
\end{proof}
In Algorithm \ref{alg:Enumeration} we formalize our enumeration procedure. The correctness of the algorithm is given by Lemma \ref{lem:CorrectnessEnumerative}. To solve problem \eqref{eq:target-problem-generic}, it is enough to run Algorithm \ref{alg:Enumeration}, and evaluate $\theta$ over the set $V = \mathscr{V}(D)$.

\begin{algorithm}[t]
	\SetAlgoLined
        \text{\bf Input:} $A,B,b$ defining a polytope $D=\{(x,y)\in \mathbb{R}^{n_x} \times \mathbb{R}^{n_y}\,:\, Ax+By\leq b\}$\;
	Set $V = \emptyset$, $s^*= 0$\;
        Compute $\mathscr{F}_{\leq\nx}(D)$\;
	\While{true}{
            Solve problem \eqref{eq:exactformulation_final} and obtain an optimal solution \((z^*,s^*,x^*,y^*)\)\;
            \uIf{$s^*=0$}{
                $V\leftarrow V\cup \{x^*\}$\;
                Store $\ell(x^*) =\{F \in \mathscr{F}_{\leq\nx}(D)\, :\, z^*_F = 1\}$\;
            }
            \Else{
                \emph{break}\;
            }
	}
	\bigskip
	\KwResult{The set $V=\mathscr{V}(D)$}
	\caption{Chamber vertex enumeration algorithm}\label{alg:Enumeration}
\end{algorithm}


\AS{\begin{remark}
    It is worth noting that Algorithm \ref{alg:Enumeration} is double-exponential, since problem \eqref{eq:exactformulation_final} is a MIP whose number of variables is linear on the size of the Face Lattice, this last being possibly exponential on the size of the original problem. This makes Algorithm \ref{alg:Enumeration} far from being scalable. It is presented, however, to give a computational baseline (useful for really small problems), since, to the best of our knowledge, there is no other general algorithm in the literature computing global solution for problems of the form \eqref{eq:target-problem-generic} (and in particular \eqref{eq:target-problem}).
\end{remark}}

\subsection{Practical computational considerations}

Algorithm \ref{alg:Enumeration} is basically made up of two (expensive)
computations: the (partial) Face Lattice $\mathscr{F}_{\leq\nx}$ and solving problem \eqref{eq:exactformulation_final}.
For Face Lattice computations, we rely on Polymake \cite{polymake:2000}, which is a highly optimized package for these purposes. For solving problem \eqref{eq:exactformulation_final} we use Gurobi \cite{gurobi}, which we enhance using the following simple local search heuristic.

\paragraph{A local-search heuristic:} When solving problem \eqref{eq:exactformulation_final} in Algorithm \ref{alg:Enumeration}, it is not strictly necessary to have the true optimal solution. A feasible solution 
\((\hat{z},\hat{s},\hat{x},\hat{y})\) such that $\hat{z}$ induces a 
maximal element of $\{ \ell(C)\ :\ C\in \mathscr{C}(D) \}$ suffices.
We can exploit this in the following way: whenever a feasible solution
\((\hat{z},\hat{s},\hat{x},\hat{y})\) is found by Gurobi, we flip any component $\hat{z}_F = 0$ to 1, and check if the $x$ components can be modified to obtain a new feasible solution to \eqref{eq:exactformulation_final}. This is a linear program. We repeat the process for each $\hat{z}_F = 0$ and pass the resulting solution to Gurobi via a callback.


\section{Monte-Carlo approximation scheme 
	\label{sec:Monte-Carlo}}

The enumeration algorithm of Section \ref{sec:Algorithms} has several drawbacks. First, it requires to compute (in practice) the whole Face Lattice of $D$, which might depend exponentially on the whole dimension $\nx+\ny$. Even with the Face Lattice at hand, computing all chamber vertices in $\mathscr{V}(D)$ can be hard, and as we have seen in Section \ref{sec:Examples}, $\mathscr{V}(D)$ might be exponentially large.

Another approach, that we explore in this section, is  to try to compute the collection of full-dimensional chambers $\mathscr{K}(D)$. If one has access to this family, one could find the solution of problem \eqref{eq:target-problem-generic} as follows:
\begin{enumerate}
   \item For each $K\in \mathscr{K}(D)$, compute $d_K\in\R^{\nx}$ such that $\traza{\theta}{K} = \langle d_K,\cdot\rangle + a_K$ (constant $a_K$ can be disregarded), and solve the linear problem $\min_{x\in K}\langle d_K,x\rangle$.
    \item Noting that $X = \bigcup_{K\in \mathscr{K}(D)} K$, the solution $x^*$ with minimal value $\theta(x^*)$ among those found in the first step must be the solution of problem \eqref{eq:target-problem-generic}.
\end{enumerate}
Listing the whole family $\mathscr{K}(D)$ could be exponentially long, as the Knapsack problem illustrates in Section \ref{sec:Examples}, but we have an advantage: If we draw a point $x\in X$ randomly, $\sigma(x)$ will be a full-dimensional chamber almost surely.
Indeed, this follows from Proposition \ref{prop:RequiredFaces} and the fact that there are finitely many chambers in $\mathscr{C}(D)$, and only those in $\mathscr{K}(D)$ are not negligible.


\subsection{Label representation for full-dimensional chambers}

To simplify the exposition, from now on, we will assume that $X\subseteq [0,1]^{\nx}$ and we will write $X^c:=[0,1]^{\nx}\setminus X$. Since $D$ is a polytope, this requirement can be easily attained by a mild change of variables. To be able to consider samples in $[0,1]^{\nx}$ (which are easy to generate, and they are independent of the specific structure of $D$), we identify $\ell(X^c)$ with $\emptyset$, which correspond to the set of faces that are activated by a point $x\in X^c$. 

Note that whenever $F\in \ell(K)$ for a full-dimensional chamber $K\in\mathscr{K}(D)$, then $\dim(F) \geq \nx$. Then, we obtain the following direct lemma, that improves Proposition \ref{prop:RequiredFaces}.

\begin{lemma}\label{lemma:DimReduction} let $K \in \mathscr{K}(D)$. Then, for any $x\in\into(K)$, we have that
    \begin{equation}
        K = \bigcap\left\{ \pi(F)\ :\ F\in\mathscr{F}_{\nx},\, x\in \pi(F) \right\}\quad \text{and}\quad \ell(K)\subseteq \mathscr{F}_{\nx}.
    \end{equation}
\end{lemma}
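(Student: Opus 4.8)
The plan is to leverage the observation stated just before the lemma — that any face appearing in the label of a full-dimensional chamber must have dimension at least $\nx$ — and to combine it with Proposition \ref{prop:RequiredFaces}, which already furnishes the reduction from $\mathscr{F}$ to $\mathscr{F}_{\leq\nx}$. Both assertions then follow quickly, so I would treat this as a short consequence of the earlier structural results rather than an independent argument.

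First I would establish the inclusion $\ell(K)\subseteq\mathscr{F}_{\nx}$. By the definition \eqref{eq:DefLabelSet} of the label we already have $\ell(K)\subseteq\mathscr{F}_{\leq\nx}$, so it suffices to exclude $\adim(F)<\nx$ for $F\in\ell(K)$. Fix such an $F$; then $K\subseteq\pi(F)$ by definition of the label. Since $K\in\mathscr{K}(D)$ has nonempty interior, $\adim(K)=\nx$, and as $\pi(F)\subseteq X\subseteq\R^{\nx}$ contains $K$ we conclude $\adim(\pi(F))=\nx$. Because $\pi$ is linear, $\adim(\pi(F))\leq\adim(F)$, whence $\adim(F)\geq\nx$; together with $\adim(F)\leq\nx$ this forces $\adim(F)=\nx$, i.e.\ $F\in\mathscr{F}_{\nx}$.

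For the equality I would fix $x\in\into(K)$. Since $K$ is full-dimensional, $\into(K)=\ri(K)$, so Proposition \ref{prop:RequiredFaces}(2) gives $\sigma(x)=K$, and part (1) yields $K=\bigcap\{\pi(F):F\in\mathscr{F}_{\leq\nx},\,x\in\pi(F)\}$. The key intermediate claim is that, for $F\in\mathscr{F}_{\leq\nx}$, the condition $x\in\pi(F)$ is equivalent to $F\in\ell(K)$, i.e.\ to $K\subseteq\pi(F)$: the direction $\Leftarrow$ is immediate because $x\in K$, while $\Rightarrow$ holds because if $x\in\pi(F)$ then $\pi(F)$ is one of the sets in the intersection defining $\sigma(x)$, so $K=\sigma(x)\subseteq\pi(F)$. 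Hence $K=\bigcap_{F\in\ell(K)}\pi(F)$, and since the first part gives $\ell(K)\subseteq\mathscr{F}_{\nx}$, the index set $\{F\in\mathscr{F}_{\nx}:x\in\pi(F)\}$ coincides exactly with $\ell(K)$, which delivers the asserted intersection over $\mathscr{F}_{\nx}$.

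I do not expect a genuine obstacle, as the lemma is described as direct. The only point needing a little care is the equivalence $x\in\pi(F)\iff F\in\ell(K)$, which relies crucially on $x$ lying in the relative interior of $K$ so that $\sigma(x)=K$; without that hypothesis, $x\in\pi(F)$ would only guarantee $\sigma(x)\subseteq\pi(F)$ for the possibly strictly smaller chamber $\sigma(x)$, and the passage from $\mathscr{F}_{\leq\nx}$ to $\mathscr{F}_{\nx}$ would break down.
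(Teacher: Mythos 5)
Your proof is correct and follows exactly the route the paper intends: the paper states this lemma as ``direct,'' relying on the immediately preceding observation that faces in the label of a full-dimensional chamber have dimension at least $\nx$, together with Proposition \ref{prop:RequiredFaces}, and your argument supplies precisely those details (the dimension count $\adim(\pi(F))\leq\adim(F)$ for the label inclusion, and $\sigma(x)=K$ for $x\in\into(K)$ to identify the index sets). No gaps.
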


This is already a substantial improvement since computing $\mathscr{F}_{\nx}$ is less demanding than computing $\mathscr{F}_{\leq\nx}$. Moreover, the size of $\mathscr{F}_{\nx}$ can be controlled by fixing only the follower's dimension $\ny$ since
\begin{equation}
    |\mathscr{F}_{\nx}| \leq \left|\{ J\subseteq [m]\ :\ |J| = \ny \}\right| = {m\choose\ny} = O(m^{\ny}).
\end{equation}

Motivated by this observation, for every $x\in [0,1]^{\nx}$ we define
\begin{equation}\label{eq:Reducedlabel}
    \hat{\ell}(x) = \{ F\in\mathscr{F}_{\nx}\ :\ x\in \pi(F) \}.
\end{equation}
Then, for almost every $x\in [0,1]^{\nx}$, one has that $\hat{\ell}(x) = \ell(x)$.
The next lemma, which is also direct, allows us to compute the linear component of $\theta$ in a full-dimensional chamber $K$ at any point $x\in\into(K)$.

\begin{lemma}\label{lemma:Compute-dK} Let $x\in X$ and let $\ell = \hat{\ell}(x)$. If $K:=\sigma(x)$ is a full-dimensional chamber, then for each $j\in [\nx]$, the following problem
\begin{equation}\label{eq:ProblemToFinddK}
\left\{\begin{array}{cl}
    \displaystyle\max_{t,\,(y_F)_{F\in \ell}} &  t  \\
    s.t. & \begin{array}{ll}(x+te_j,y_F) \in D,\quad&\forall F\in\ell,\\
   g_F(x+te_j,y_F) =0,\quad&\forall F\in\ell,
   \end{array}
\end{array}   \right. 
\end{equation}
has a solution $t_j^*>0$. Moreover, for every  function $\theta:X\to \R$ continuous and piecewise linear over the chamber complex $\mathscr{C}(D)$, the vector $d_{\ell}\in \R^{\nx}$ such that $\traza{\theta}{K} = \langle d_\ell,\cdot\rangle + a_\ell$ (for some $a_\ell\in\R$) can be taken as
\[
d_{\ell} = \begin{pmatrix}
\frac{\theta(x + t_1^*e_1) - \theta(x)}{t_1^*},&
\ldots,&
\frac{\theta(x + t_{\nx}^*e_{\nx}) - \theta(x)}{t_{\nx}^*}
\end{pmatrix}^{\top}.
\]
\end{lemma}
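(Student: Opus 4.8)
The plan is to split the argument into the two assertions and, in both cases, to reduce everything to the single geometric fact that $x$ lies in the \emph{interior} of the full-dimensional chamber $K$, together with the observation that the linear program \eqref{eq:ProblemToFinddK} is nothing but an encoding of the one-dimensional slice $\{t : x+te_j\in K\}$.

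First I would record that $x\in\into(K)$. Indeed, by Proposition \ref{prop:RequiredFaces}(2) we have $x\in\ri(\sigma(x))=\ri(K)$, and since $K$ is full-dimensional, $\ri(K)=\into(K)$. Next I would reinterpret the feasible set of \eqref{eq:ProblemToFinddK}. For a fixed $t$ the variables $y_F$ are uncoupled across $F\in\ell$, so the system is solvable if and only if, for each $F\in\ell$, there is some $y_F$ with $(x+te_j,y_F)\in D$ and $g_F(x+te_j,y_F)=0$. By the face representation \eqref{eq:FacesToSets}, these two conditions together say exactly $(x+te_j,y_F)\in F$, i.e.\ $x+te_j\in\pi(F)$. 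Hence $t$ is feasible if and only if $x+te_j\in\bigcap_{F\in\ell}\pi(F)$. Since $\ell=\hat{\ell}(x)=\{F\in\mathscr{F}_{\nx}:x\in\pi(F)\}$ and $x\in\into(K)$, Lemma \ref{lemma:DimReduction} identifies this intersection with $K$ itself, so the feasible set is precisely $\{t\in\R:x+te_j\in K\}$. This is a closed bounded interval (as $K$ is a polytope) containing $0$ in its interior (as $x\in\into(K)$); therefore the maximum $t_j^*$ is attained and strictly positive, which proves the first assertion.

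For the second assertion, by convexity of $K$ the whole segment $[x,x+t_j^*e_j]$ is contained in $K$, and on $K$ one has $\theta=\langle d_\ell,\cdot\rangle+a_\ell$ by hypothesis. Evaluating this affine expression at the two endpoints and subtracting cancels $a_\ell$ as well as the components of $d_\ell$ orthogonal to $e_j$, leaving $\theta(x+t_j^*e_j)-\theta(x)=t_j^*\langle d_\ell,e_j\rangle=t_j^*(d_\ell)_j$. Dividing by $t_j^*>0$ recovers the $j$-th coordinate of $d_\ell$, and assembling these over $j\in[\nx]$ yields the stated formula. Since $K$ is full-dimensional, $\aff(K)=\R^{\nx}$, so this $d_\ell$ is in fact the unique linear part, which is why the finite-difference recipe is unambiguous.

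I expect no serious obstacle; the only point demanding genuine care is the reduction of the LP to the slice $\{t:x+te_j\in K\}$, which hinges on (i) the uncoupling of the $y_F$ variables across faces and (ii) the correct invocation of Lemma \ref{lemma:DimReduction} with $\ell=\hat{\ell}(x)$. A secondary subtlety is that the optimizer $x+t_j^*e_j$ lies on $\partial K$, where $\theta$ is not a priori given by the chamber-$K$ formula; this is precisely why the continuity hypothesis on $\theta$ is needed, as it forces the affine representation $\langle d_\ell,\cdot\rangle+a_\ell$ to extend from $\into(K)$ to the whole closed chamber $K$, its boundary included.
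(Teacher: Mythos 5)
Your proof is correct and follows essentially the same route as the paper's: reduce problem \eqref{eq:ProblemToFinddK} to $\max\{t : x+te_j\in K\}$, use $x\in\into(K)$ to get $t_j^*>0$, and read off $(d_\ell)_j$ from the finite difference along the segment $[x,x+t_j^*e_j]\subseteq K$. You simply make explicit the steps the paper leaves as ``clear'' and ``direct'' --- in particular the uncoupling of the $y_F$ variables and the identification $\bigcap_{F\in\ell}\pi(F)=K$ via Lemma \ref{lemma:DimReduction} --- which is a faithful elaboration rather than a different argument.
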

\begin{proof}
Let $K = \sigma(x)$. Since $x\in \ri(K) = \into(K)$, it is clear that for every $j\in[\nx]$, the problem
\[
\max_{t}\left\{ t\ :\ x+te_j\in K\right\},
\]
has a solution $t_j^*>0$. The conclusion then follows by noting that the above optimization problem is equivalent to problem \eqref{eq:ProblemToFinddK}. The rest is direct.\qed
\end{proof}

Based on both lemmas, we propose the Monte-Carlo algorithm presented in Algorithm \ref{alg:MonteCarlo} to approximate the solution of problem \eqref{eq:target-problem-generic}, by randomly drawing points from $[0,1]^{\nx}$. The algorithm returns two lists that allow to compute the visited chambers during the execution, and the numbers of visits per chamber. This information is used to estimate error measures, as it is described in the next subsection.

\begin{algorithm}[t]
	\SetAlgoLined
	\text{\bf Input:} $A,B,b$ defining a polytope $D=\{(x,y)\in \mathbb{R}^{n_x} \times \mathbb{R}^{n_y}\,:\, Ax+By\leq b\}$, a function $\theta$ continuous, computable, and piecewise linear over $\mathscr{C}(D)$\;
	Generate a (uniformly iid) training sample $S$ of size $N$ over $[0,1]^{\nx}$\;
	Set $\texttt{List1} = \emptyset$, $\texttt{List2} = \emptyset$, $\hat{x}=NaN$, $\hat{\theta} = \infty$\;
	\ForEach{$\xi\in S$}{
        		Compute $\ell = \hat{\ell}(\xi):= \{ F\in \mathscr{F}_{\nx}\ :\ \xi\in \pi(F) \}$\;
        		$\texttt{List1}\gets \texttt{List1}\cup\{(\xi,\ell)\}$\;
        		\uIf{$\ell\in \texttt{List2}$ or $\ell = \emptyset$}{
        		\textbf{continue}\;}
        		
        		\Else{
        		$\texttt{List2}\gets \texttt{List2}\cup\{\ell\}$\;
        		Compute $d_{\ell}$ as in Lemma \ref{lemma:Compute-dK}\;
        		Solve the problem
        		\[
        		\left\{\begin{array}{cl}
		\displaystyle\min_{x,(y_F)_{F\in\ell}}\,& \langle d_{\ell},x\rangle\\
		s.t. & \begin{array}{ll}
			Ax + By_F \leq b\quad&\forall F\in \ell\\
			g_F(x,y_F) = 0\quad&\forall F\in \ell
		\end{array}
	\end{array}\right.
    \]
    finding a solution $\hat{x}_{\ell}$ and set the value $\hat{\theta}_{\ell} = \theta(\hat{x}_{\ell})$\;
	\If{$\hat{\theta}_{\ell} < \hat{\theta}$}{ $\hat{x}\gets \hat{x}_{\ell}$, $\hat{\theta} \gets \hat{\theta}_{\ell}$\;}
	}
	}
	\bigskip
	\KwResult{The pair solution-value $(\hat{x},\hat{\theta})$, 
 and the lists \texttt{List1}, \texttt{List2}.}
	\caption{Monte-Carlo algorithm}\label{alg:MonteCarlo}
\end{algorithm}

\subsection{Volume-error estimators}

The main drawback of the 
Algorithm \ref{alg:MonteCarlo} is that we do not know if the result is an optimal solution of problem \eqref{eq:target-problem-generic} or not. 
Therefore, we would like to provide a suitable measure of error for the estimation $(\hat{x},\hat{\theta})$ produced by Algorithm \ref{alg:MonteCarlo}. Error bounds based on Lipschitzness of $\theta$ are not an option due to the curse of dimensionality: the number of samples required to be (metrically) close to the optimal solution grows exponentially with $\nx$  (see, e.g., \cite{Leobacher2014introduction}). Instead, we propose to consider the volume of the ``unseen chambers'' during the execution of the Monte-Carlo algorithm.

To do so, define $\mathscr{L} = \mathcal{P}(\mathscr{F}_{\nx})$, where $\mathcal{P}(\cdot)$ denotes the power set. Recalling that $\ell(X^c)=\emptyset$, we get that $\{ \ell(K)\ :\ K\in \mathscr{K}(D)\cup\{X^c\} \}$ is a subset of $\mathscr{L}$.  For each $\ell\in \mathscr{L}$, let us define $\ind_{\ell} := \ind_{\into(K_{\ell})}$, where
\begin{equation}\label{eq:ChamberOfLabel}
\begin{aligned}
K_{\ell} = \left\{\begin{array}{cl}
K\in \mathscr{K}(D)&\text{ if }\ell = \ell(K)\\
X^c&\text{ if }\ell = \emptyset.\\
\emptyset&\text{ otherwise.}
\end{array}\right.
\end{aligned}
\end{equation}
Let $\mathcal{N} = [0,1]^{\nx}\setminus \bigcup\{ \into(K)\ :\ K\in \mathscr{K}(D)\cup \{X^c\} \}$. Note that $\mathcal{L}^{\nx}(\mathcal{N}) = 0$, and that for each $x\in [0,1]^{\nx}\setminus \mathcal{N}$ there is one and only one $\ell\in\mathscr{L}$ such that $\ind_{\ell}(x) = 1$, which is given by $\ell = \hat{\ell}(x)$, as in equation \eqref{eq:Reducedlabel}.

In what follows, for a set $K\subseteq \R^{\nx}$ we write $\mathrm{Vol}(K)$ as its $\nx$-dimensional volume, i.e., $\mathrm{Vol}(K) = \mathcal{L}^{\nx}(K)$, where $\mathcal{L}^{\nx}$ stands for the Lebesgue measure over $\R^{\nx}$.

\begin{proposition}\label{prop:VarianceSumIndicators} Let $\{\ell_1,\ell_2,\ldots,\ell_p\}$ be a subset of $\mathscr{L}$ with no repeated elements. Let $Z$ be a uniformly distributed random vector over $[0,1]^{\nx}$. Then,
    \begin{enumerate}
        \item $\sum_{i=1}^p \ind_{\ell_i}$ coincides with the indicator function of $\bigcup_{i=1}^p\into(K_{\ell_i})$. 
        \item $\mathbb{E}\left[ \sum_{i=1}^p \ind_{\ell_i}(Z)\right] = \mathrm{Vol}\left( \bigcup_{i=1}^p K_{\ell_i} \right) = \sum_{i=1}^p \mathrm{Vol}\left(K_{\ell_i} \right)$.
        \item $\mathrm{Var}\left[ \sum_{i=1}^p \ind_{\ell_i}(Z)\right] \leq 1$.
    \end{enumerate}
\end{proposition}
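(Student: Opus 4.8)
The plan is to reduce all three statements to properties of a single indicator (Bernoulli) random variable, by first collapsing the sum $\sum_{i=1}^p \ind_{\ell_i}$ into one indicator function. The crucial input is the disjointness of the supports: by construction $\ind_{\ell_i} = \ind_{\into(K_{\ell_i})}$, and the discussion preceding the statement guarantees that every $x \in [0,1]^{\nx} \setminus \mathcal{N}$ satisfies $\ind_\ell(x) = 1$ for exactly one $\ell \in \mathscr{L}$. Hence the sets $\into(K_{\ell_1}), \ldots, \into(K_{\ell_p})$ are pairwise disjoint (those $\ell_i$ for which $K_{\ell_i} = \emptyset$ contribute the zero function $\ind_{\into(\emptyset)} \equiv 0$ and can be ignored). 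For pairwise disjoint sets, the sum of their indicators equals the indicator of their union, which is exactly Part 1.

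For Part 2, I would apply Part 1 and take expectations: since $Z$ is uniform on the unit cube $[0,1]^{\nx}$, whose Lebesgue measure is $1$, one has $\E[\ind_A(Z)] = \P(Z \in A) = \mathrm{Vol}(A)$ for any measurable $A$, and in particular for $A = \bigcup_{i=1}^p \into(K_{\ell_i})$. It then remains to pass from the union of the interiors to the union of the closed chambers and to additivity. Both are routine measure-theoretic facts: the symmetric difference between $\bigcup_i \into(K_{\ell_i})$ and $\bigcup_i K_{\ell_i}$ is contained in the finite union of boundaries $\bigcup_i \partial K_{\ell_i}$, which is $\mathcal{L}^{\nx}$-null since each $K_{\ell_i}$ is a polytope; and disjointness of the interiors gives $\mathrm{Vol}(\bigcup_i \into(K_{\ell_i})) = \sum_i \mathrm{Vol}(\into(K_{\ell_i})) = \sum_i \mathrm{Vol}(K_{\ell_i})$.

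Finally, Part 3 is immediate once Part 1 is in hand: the random variable $W := \sum_{i=1}^p \ind_{\ell_i}(Z) = \ind_A(Z)$ takes only the values $0$ and $1$, so it is Bernoulli with parameter $q := \mathrm{Vol}(A) = \E[W]$. Therefore $\mathrm{Var}(W) = q(1-q) \leq \tfrac{1}{4} \leq 1$. I do not anticipate a genuine obstacle here; the only point requiring a little care is the interior-versus-closure volume identity used in Part 2, which hinges on the chambers being polytopes (so their boundaries are $\mathcal{L}^{\nx}$-negligible), while everything else follows directly from the disjointness established at the very start.
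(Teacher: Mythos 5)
Your proof is correct and follows essentially the same route as the paper's: establish pairwise disjointness of the sets $\into(K_{\ell_i})$ from the partition property, collapse the sum into the single indicator $\ind_U$, and then deduce parts 2 and 3 from standard facts about indicators of a uniform variable. The only (harmless) difference is in part 3, where your Bernoulli observation gives the sharper bound $q(1-q)\leq \tfrac{1}{4}$, whereas the paper settles for the cruder pointwise estimate $(\ind_U - \mathrm{Vol}(U))^2 \leq \max\{\mathrm{Vol}(U),1-\mathrm{Vol}(U)\}^2\leq 1$.
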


\begin{proof} Set $U = \bigcup_{i=1}^p\into(K_{\ell_i})$. 
 Since $\{ \ri(C)\ :\ C\in\mathscr{C}(D)\}\cup\{X^c\}$ is a partition of $[0,1]^{\nx}$ and since the labels $\{\ell_1,\ell_2,\ldots,\ell_p\}$ are all different, we get that the union defining $U$ is pairwise disjoint. Then,
\begin{align*}
\ind_{U}(x) = 1 &\iff \exists! i\in[p], x\in \into(K_{\ell_i})\iff \sum_{i=1}^p \ind_{\ell_i}(x) = 1.
\end{align*}
This proves statement \textit{1}. Statement \textit{2} follows directly from \textit{1}, by noting that the uniform distribution of $Z$ entails that $\E[\ind_O(Z)] = \mathrm{Vol}(O)$ for each $O\subseteq[0,1]^{\nx}$.
Let us prove statement \textit{3}. By definition, we have $\mathrm{Vol}(U)\leq 1$ and thus $\max\{\mathrm{Vol}(U),1-~\mathrm{Vol}(U)\}\leq 1$. We can write
\begin{align*}
  \mathrm{Var}\left[ \sum_{i=1}^p \ind_{\ell_i}(Z)\right] &= \int_{[0,1]^{\nx}} (\ind_U(z) - \mathrm{Vol}(U))^2dz\\
  &\leq  \int_{[0,1]^{\nx}} \max\{\mathrm{Vol}(U),1-\mathrm{Vol}(U) \}^2dz\leq \int_{[0,1]^{\nx}} 1dz = 1. 
\end{align*}
This concludes the proof.\qed
\end{proof}


Now, let us consider a sample $S = \{ \xi_1,\ldots,\xi_{N} \}$ of independent uniformly distributed random variables over $[0,1]^{\nx}$. We define the random set $K(S)$ as
\[
K(S) := \bigcup_{i=1}^{N_1}  K_{\ell(\xi_i)}. 
\]
Then, the complement of $K(S)$, namely $K(S)^c = [0,1]^{\nx}\setminus K(S)$, can be represented with its indicator function, and can be written (almost surely) as
\[
\ind_{K(S)^c}(x) = \sum_{\ell\in \mathscr{L}} \left[\prod_{i=1}^{N} (1 - \ind_{\ell}(\xi_i))\right]\ind_{\ell}(x),\quad\forall x\in [0,1]^{\nx}\setminus \mathcal{N}.
\]
\begin{definition} For a sample $S$ of $[0,1]^{\nx}$, we define the \emph{volume-error estimator} as
    \begin{equation}
        \rho(S) = \mathrm{Vol}(K(S)^c) = 1- \mathrm{Vol}(K(S)).
    \end{equation}
\end{definition}


The volume-error estimator can be interpreted as follows: For a given sample $S$ and an optimal solution $x^*$ of problem \eqref{eq:target-problem-generic}, $\rho(S)$ is the probability of $x^*\in K(S)^c$, if $x^*$ were a random vector following a uniform distribution in $[0,1]^{\nx}$. Another interpretation is that $\rho(S)$ is the proportion of points that still have a chance to have a lower value than $\hat{\theta}$ since, by construction, $\hat{\theta}\leq \theta(x)$ for all $x\in K(S)$. 

To estimate $\rho(S)$, we will divide the sample $S$ in two subsamples: a first sample $S_1= \{\xi_1,\ldots,\xi_{N_1}\}$, called the \emph{training sample}, and a second one $S_2 = \{ \zeta_1,\ldots,\zeta_{N_2} \}$, called the \emph{testing sample}. It is important to address that this division must be decided before any realization of $S=(S_1,S_2)$ is obtained, to preserve independence. With this division, we can define the estimator
\begin{equation}\label{eq-def:DoubleEstimator}
U(S_1,S_2) = \frac{1}{N_2}\sum_{j=1}^{N_2} \ind_{K(S_1)^c}(\zeta_j)
= \frac{1}{N_2}\sum_{j=1}^{N_2}\sum_{\ell\in \mathscr{L}} \left[\prod_{i=1}^{N_1} (1 - \ind_{\ell}(\xi_i))\right]\ind_{\ell}(\zeta_j),
\end{equation}
which selects a random set $K(S_1)$ given by all the chambers observed by the training sample $S_1$, and then estimates the volume of the complement using  the testing sample $S_2$. Observe that, from the array \texttt{List1} produced by Algorithm \ref{alg:MonteCarlo}, we can easily compute $U(S_1,S_2)$ as
\[
U(S_1,S_2) = \frac{1}{N_2}\sum_{j=1}^{N_2} \left[\prod_{i=1}^{N_1} (1 - \ind_{\hat{\ell}(\zeta_j)}(\xi_i))\right] = \frac{1}{N_2}\sum_{j=1}^{N_2} \left[\prod_{i=1}^{N_1} \big(\hat{\ell}(\zeta_j)\neq \hat{\ell}(\xi_i)\big)\right],
\]
where $\big(\hat{\ell}(\zeta_j)\neq \hat{\ell}(\xi_i)\big)$ is interpreted as its associated boolean value ($1$ if the inequality holds, and $0$ otherwise). 


\begin{proposition}\label{prop:errEstimation} Let $(S_1,S_2)$ be a fixed training-testing division for the sample $S$. The following assertions hold:
\begin{enumerate}
    \item The conditional expectation $\mathbb{E}[U(S_1,S_2)|S_1]$ is well-defined, $S_1$-measurable and it verifies that
    \[
    \mathbb{E}[U(S_1,S_2)|S_1] = \mathrm{Vol}(K(S_1)^c) = 1-\mathrm{Vol}(K(S_1)).
    \]
    \item The expected error is bounded by $N_2^{-1/2}$, i.e.,
    \[
    \mathbb{E}\Big[ |e(S_1,S_2)| \Big]\leq \frac{1}{\sqrt{N_2}},
    \]
    where $e(S_1,S_2) = \mathrm{Vol}(K(S_1)^c) - U(S_1,S_2)$.
\end{enumerate}
\end{proposition}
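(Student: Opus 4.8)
The plan is to treat both assertions by conditioning on the training sample $S_1$ and exploiting the independence of $S_1$ and $S_2$. The first observation is that, once $S_1$ is fixed, the set $K(S_1) = \bigcup_{i=1}^{N_1} K_{\ell(\xi_i)}$ is a fixed (measurable) subset of $[0,1]^{\nx}$; more precisely, the map $S_1 \mapsto \ind_{K(S_1)^c}$ is measurable because it depends on $S_1$ only through the finitely many labels $\hat{\ell}(\xi_i)\in\mathscr{L}$, each a measurable function of $\xi_i$ away from the null set $\mathcal{N}$. This guarantees that the conditional expectation in the first item is well-defined and $S_1$-measurable.

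For the first assertion I would write $U(S_1,S_2) = \frac{1}{N_2}\sum_{j=1}^{N_2}\ind_{K(S_1)^c}(\zeta_j)$ and condition on $S_1$. Since the testing points $\zeta_j$ are i.i.d.\ uniform on $[0,1]^{\nx}$ and independent of $S_1$, for each $j$ one has $\mathbb{E}[\ind_{K(S_1)^c}(\zeta_j)\mid S_1] = \mathrm{Vol}(K(S_1)^c)$, exactly as in the computation $\mathbb{E}[\ind_O(Z)] = \mathrm{Vol}(O)$ used in Proposition \ref{prop:VarianceSumIndicators}. Averaging over $j$ then yields $\mathbb{E}[U(S_1,S_2)\mid S_1] = \mathrm{Vol}(K(S_1)^c) = 1 - \mathrm{Vol}(K(S_1))$, which is the claim.

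For the second assertion I would first reduce to a second-moment bound via Jensen's inequality, $\mathbb{E}[|e(S_1,S_2)|] \leq \sqrt{\mathbb{E}[e(S_1,S_2)^2]}$, and then peel off the conditioning by the tower property, $\mathbb{E}[e^2] = \mathbb{E}\big[\mathbb{E}[e^2\mid S_1]\big]$. By the first assertion, $\mathrm{Vol}(K(S_1)^c)$ is precisely the conditional mean of $U(S_1,S_2)$, so $\mathbb{E}[e^2\mid S_1] = \mathrm{Var}[U(S_1,S_2)\mid S_1]$. Given $S_1$, $U$ is an average of the $N_2$ independent and identically distributed variables $\ind_{K(S_1)^c}(\zeta_j)$, whence $\mathrm{Var}[U\mid S_1] = \frac{1}{N_2}\mathrm{Var}[\ind_{K(S_1)^c}(\zeta_1)\mid S_1]$. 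Writing $K(S_1)^c$ as the disjoint union of interiors of the unseen chambers $K_\ell$ (those labels $\ell\in\mathscr{L}$ not realized by $S_1$), the integrand $\ind_{K(S_1)^c}$ is a sum of indicators $\sum_\ell \ind_\ell$ with pairwise distinct labels, so Proposition \ref{prop:VarianceSumIndicators}(3) gives $\mathrm{Var}[\ind_{K(S_1)^c}(\zeta_1)\mid S_1]\leq 1$. Hence $\mathbb{E}[e^2\mid S_1]\leq 1/N_2$ pointwise, and taking expectations and square roots gives $\mathbb{E}[|e|]\leq 1/\sqrt{N_2}$.

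The routine parts are the moment computations; the point requiring care is the measurability and conditioning setup---verifying that $K(S_1)$ is a bona fide measurable random set and that the independence of the training and testing samples legitimizes replacing the conditional expectation of $\ind_{K(S_1)^c}(\zeta_j)$ by the (random) volume $\mathrm{Vol}(K(S_1)^c)$. This is precisely why the statement insists that the training--testing split be fixed before any realization of $S$. A secondary subtlety is that the identity $\ind_{K(S_1)^c} = \sum_\ell \ind_\ell$ holds only almost everywhere (off the null set $\mathcal{N}$), but since all expectations are integrals against Lebesgue measure this does not affect the argument.
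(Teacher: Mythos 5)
Your proof is correct and follows essentially the same route as the paper's: conditioning on $S_1$ (the paper phrases this via Fubini on the product space), identifying $\mathbb{E}[U\mid S_1]$ with $\mathrm{Vol}(K(S_1)^c)$, and then bounding $\mathbb{E}[|e|]$ by Jensen plus the conditional variance of the sample mean, using Proposition \ref{prop:VarianceSumIndicators} to bound the indicator's variance by $1$. Your explicit factorization $\mathrm{Var}[U\mid S_1]=\frac{1}{N_2}\mathrm{Var}[\ind_{K(S_1)^c}(\zeta_1)\mid S_1]$ is in fact a slightly cleaner rendering of the step the paper delegates to a citation, and your measurability remarks match the paper's intent.
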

\begin{proof}
Set $\Omega_1 = \big([0,1]^{n_x}\big)^{N_1}$ and $\Omega_2 = \big([0,1]^{n_x}\big)^{N_2}$. Let $\mathbb{P}_1$ be the Lebesgue probability measure over $\Omega_1$ and $\mathbb{P}_2$ be the Lebesgue probability measure over $\Omega_2$. By its construction in \eqref{eq-def:DoubleEstimator}, it is clear that $U$ is $(\mathbb{P}_1\times \mathbb{P}_2)$-integrable in the probability product space $(\Omega_1,\mathcal{B}(\Omega_1),\mathbb{P}_1)\times (\Omega_2,\mathcal{B}(\Omega_2),\mathbb{P}_2)$, and thus, Fubini's theorem (see, e.g., \cite[Theorem 14.19]{klenke2013probability}) entails that
\[
\mathbb{E}[U(S_1,S_2)|S_1] = \int U(S_1,S_2)d\mathbb{P}_2(S_2)
\]
is well-defined and $\mathbb{P}_1$-integrable. Now, for a fixed value of $S_1$, we get $K(S_1)^c$ is a fixed closed set and that $U(S_1,S_2) = \frac{1}{N_2}\sum_{j=1}^{N_2} \ind_{K(S_1)^c}(\zeta_j)$. Thus,
\[
\int U(S_1,S_2)d\mathbb{P}(S_2) 
= \frac{1}{N_2}\sum_{j=1}^{N_2} \mathbb{E}_{\zeta_j}[\ind_{K(S_1)^c}(\zeta_j)] = \mathrm{Vol}(K(S_1)^c).
\]
This finishes the first part of the proof. Now, for the second part, we first observe that 
\begin{equation}\label{eq:errDef}
e(S_1,S_2)^2 = (\mathbb{E}[U(S_1,S_2)|S_1] - U(S_1,S_2))^2,
\end{equation}
which is $(\mathbb{P}_1\times \mathbb{P}_2)$-integrable. Again, applying Fubini's theorem, we get that
\[
\mathbb{E}[ e(S_1,S_2)^2 ] = \mathbb{E}\Big[ \mathbb{E}[ e(S_1,S_2)^2 | S_1] \Big] = \mathbb{E}\Big[ \mathrm{Var}[U(S_1,S_2)|S_1] \Big].
\]
Finally, the conditional variance $\mathrm{Var}[U(S_1,S_2)|S_1]$ is just given by the variance of the mean estimator of $\ind_{K(S_1)^c}$ for the sample $S_2$, i.e.,
\begin{align*}
    \mathrm{Var}[U(S_1,S_2)|S_1] &= \mathbb{E}[U(S_1,S_2)^2|S_1] - \mathbb{E}[U(S_1,S_2)|S_1]^2\\
    &=\left(\frac{1}{N_2}\sum_{j=1}^{N_2} \ind_{K(S_1)^c}(\zeta_j)\right)^2 - \mathrm{Vol}(K(S_1)^c)^2.
\end{align*}
Assuming $K(S_1)^c$ as fixed, the above expression is bounded by $1/N_2$ (see, e.g., \cite[Chapter 1]{Leobacher2014introduction}), given that $\mathrm{Var}(\ind_{K(S)^c})\leq 1$ thanks to Proposition \ref{prop:VarianceSumIndicators}. Thus, by a mild application of Jensen's inequality, we get that
\begin{align*}
\mathbb{E}[ |e(S_1,S_2)| ] &\leq \sqrt{\mathbb{E}[ e(S_1,S_2)^2 ]}
= \left( \int \mathrm{Var}[U(S_1,S_2)|S_1] d\mathbb{P}(S_1) \right)^{-1/2} \leq 
\frac{1}{\sqrt{N_2}}.
\end{align*}
This finishes the proof. \qed
\end{proof}

For a large enough training sample $S_1$, we would expect $\mathrm{Vol}(K(S_1)^c)$ to be small with high probability. However, we don't know how small $\mathrm{Vol}(K(S_1)^c)$ actually is. The size of $\mathrm{Vol}(K(S_1)^c)$ is estimated with $S_2$.

\begin{theorem}\label{thm:ConfidenceInterval} For a fixed training-testing division $(S_1,S_2)$ of sample $S$ and for a confidence level $\varepsilon>0$, the volume-error estimator $\rho(S)$ is less than $U(S_1,S_2) + \frac{\varepsilon^{-1}}{\sqrt{N_2}}$, i.e.,
\begin{equation}\label{eq:ConfidenceInterval}
    \P\left[ \rho(S) \leq U(S_1,S_2) + \frac{\varepsilon^{-1}}{\sqrt{N_2}} \right] \geq 1-\varepsilon.
\end{equation}
\end{theorem}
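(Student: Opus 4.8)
The plan is to reduce the statement to a one-line application of Markov's inequality, after observing that the full-sample quantity $\rho(S)$ is dominated by its training-only counterpart $\mathrm{Vol}(K(S_1)^c)$, which is precisely the object controlled by Proposition \ref{prop:errEstimation}.

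First I would exploit the monotonicity of the map $S\mapsto K(S)$ with respect to inclusion of the sample. Since $S_1$ is a subsample of $S=(S_1,S_2)$, every chamber observed by $S_1$ is also observed by $S$, so $K(S_1)\subseteq K(S)$ and therefore $K(S)^c\subseteq K(S_1)^c$. Taking volumes yields $\rho(S)=\mathrm{Vol}(K(S)^c)\leq \mathrm{Vol}(K(S_1)^c)$. This is the crucial step: it lets us bound the error associated with the whole sample by the error of the training sample alone, which is exactly the quantity estimated by $U(S_1,S_2)$.

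Next, recalling that $e(S_1,S_2)=\mathrm{Vol}(K(S_1)^c)-U(S_1,S_2)$, I would write $\mathrm{Vol}(K(S_1)^c)=U(S_1,S_2)+e(S_1,S_2)$ and combine it with the previous inequality to obtain $\rho(S)\leq U(S_1,S_2)+e(S_1,S_2)$. It then suffices to control $e(S_1,S_2)$ from above. Applying Markov's inequality to the nonnegative random variable $|e(S_1,S_2)|$, together with the bound $\mathbb{E}[|e(S_1,S_2)|]\leq N_2^{-1/2}$ from the second part of Proposition \ref{prop:errEstimation}, gives
\[
\P\left[|e(S_1,S_2)|\geq \frac{\varepsilon^{-1}}{\sqrt{N_2}}\right]\leq \frac{\mathbb{E}[|e(S_1,S_2)|]}{\varepsilon^{-1}/\sqrt{N_2}}\leq \frac{N_2^{-1/2}}{\varepsilon^{-1}N_2^{-1/2}}=\varepsilon.
\]

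Finally, on the complementary event, which has probability at least $1-\varepsilon$, one has $e(S_1,S_2)\leq |e(S_1,S_2)|< \varepsilon^{-1}/\sqrt{N_2}$, and hence $\rho(S)\leq U(S_1,S_2)+e(S_1,S_2)< U(S_1,S_2)+\varepsilon^{-1}/\sqrt{N_2}$. Since this event is contained in $\{\rho(S)\leq U(S_1,S_2)+\varepsilon^{-1}/\sqrt{N_2}\}$, the bound \eqref{eq:ConfidenceInterval} follows. The only genuinely substantive point is the monotonicity observation of the first step; everything else is the standard Markov route. The main thing to be careful about is that the inequalities point in the correct direction, so that we end up with a valid \emph{upper} confidence bound on $\rho(S)$ rather than a lower one.
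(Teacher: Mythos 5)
Your proposal is correct and follows essentially the same route as the paper: the key monotonicity observation $\rho(S)=\mathrm{Vol}(K(S)^c)\leq \mathrm{Vol}(K(S_1)^c)$, followed by Markov's inequality applied to $|e(S_1,S_2)|$ together with the bound $\mathbb{E}[|e(S_1,S_2)|]\leq N_2^{-1/2}$ from Proposition \ref{prop:errEstimation}. The only difference is cosmetic (you reason on the complementary event after decomposing $\mathrm{Vol}(K(S_1)^c)=U(S_1,S_2)+e(S_1,S_2)$, while the paper chains inclusions of events), so no further comment is needed.
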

\begin{proof}
We know that $\rho(S) = \mathrm{Vol}(K(S_1\cup S_2)^c) \leq \mathrm{Vol}(K(S_1)^c)$. Thus, as a mild application of Markov's inequality, we can write
\begin{align*}
  \P\left[ \rho(S) \leq U(S_1,S_2) + \frac{\varepsilon^{-1}}{\sqrt{N_2}}\right] &= \P\left[ \mathrm{Vol}(K(S_1)^c) \leq U(S_1,S_2) + \frac{\varepsilon^{-1}}{\sqrt{N_2}}\right]\\
  &\geq \P\left[ \big|\mathrm{Vol}(K(S_1)^c) - U(S_1,S_2)\big| \leq \frac{\varepsilon^{-1}}{\sqrt{N_2}}\right]\\
  &= 1- \P\left[ \big|e(S_1,S_2)\big| > \frac{\varepsilon^{-1}}{\sqrt{N_2}}\right]\\
  &\geq 
  1 - \varepsilon\sqrt{N_2}\E[|e(S_1,S_2)|].
\end{align*}
Since $\E[|e(S_1,S_2)|] \leq 1/\sqrt{N_2}$ by Proposition \ref{prop:errEstimation}, we deduce \eqref{eq:ConfidenceInterval}, finishing the proof.\qed
\end{proof}

\section{Numerical experiments and conclusion\label{sec:Numerical}}
The goal of this section is to illustrate how both Algorithms \ref{alg:Enumeration} and \ref{alg:MonteCarlo} perform in some instances of bilevel optimization with uncertain cost. To do so, we adapt some deterministic bilevel problems available in the literature.

\subsection{Set-up and instances}

We implemented both Algorithms \ref{alg:Enumeration} and \ref{alg:MonteCarlo} in Julia 1.8.2 \cite{bezanson2017julia}, using Polymake \cite{polymake:2000} to compute the faces of a polytope and Gurobi 9.5.2 \cite{gurobi} to solve \eqref{eq:exactformulation_final} and any auxiliary LP.
Our code is publicly available in \url{https://github.com/g-munoz/bilevelbayesian}.
All experiments were run single-threaded  on a Linux machine with an Intel Xeon Silver 4210 2.2G CPU and 128 GB RAM. The main objectives behind these experiments were (1) to determine how Algorithm \ref{alg:Enumeration} scales and (2) how well the Monte-Carlo algorithm performs in comparison to the exact method. A global time limit of 15 minutes was set for Algorithm \ref{alg:Enumeration}; in case this time limit is met, only the chamber vertices that were found are used.

We focus our attention on sample average formulations, as in \eqref{eq:target-problem-SAA}, where the lower-level cost is assumed to have a uniform distribution over the unit sphere. We use instances from two publicly available libraries: BOLib \cite{zhou2020bolib} and the bilevel instances in \cite{corallib}, which we call CoralLib. Since our approach relies on computing a (possibly exponentially) large number of faces, we can only consider low-dimensional instances at the moment: we restrict to $n_x + n_y \leq 10$.

Additionally, we consider randomly generated instances of the stochastic bilevel continuous knapsack problem \cite{BuchheimHenkeIrmai2022Knapsack}, as presented in Section \ref{sec:Examples}.
In our experiments, we consider $a$ to be a random non-negative vector, $\delta = 1/4$, and $d$ a vector of ones. We call \emph{Knapsack\_i} an instance generated for $n_y = i$. While these instances have a more efficient algorithm for them than the one presented here (see \cite{BuchheimHenkeIrmai2022Knapsack}), they are helpful in showing how well our general-purpose Monte-Carlo algorithm performs. 

In all experiments, we used a sample of size $N_0=100$ for the follower's cost vector. The same sample is used in both algorithms to better compare their performance.
Additionally, in Algorithm \ref{alg:MonteCarlo} we used a training sample of size $N_1=100$ and a testing sample of size $N_2=100$ as well. 

\subsection{Results}

In Table \ref{table:mainresults}, we compare the performance of both methods.

The  ``Obj gap'' column shows how far the value of the Monte-Carlo algorithm is from the exact method, i.e., if $\text{val}_i$ is the value obtained by Algorithm $i$, then the gap is 
$\mbox{Gap} = |\text{val}_1|^{-1}(\text{val}_2 - \text{val}_1)$.
Since we ran Algorithm \ref{alg:Enumeration} with a time limit, it may be that $\mbox{Gap} < 0$, which indicates the Monte-Carlo algorithm performing better than the exact method. 
 The ``Error" column shows the error estimation as per \eqref{eq-def:DoubleEstimator}. All execution times are in seconds.

The results in Table \ref{table:mainresults}
clearly show an advantage of the Monte-Carlo approach over the exact method. The Monte-Carlo approach was able to meet or surpass the value of the exact method in almost all cases. In the largest examples, the Monte-Carlo method had a much better performance: in \emph{CoralLib/linderoth, Knapsack\_6} and \emph{Knapsack\_7}, it found the optimal solution much faster than the exact method. Moreover, in \emph{Knapsack\_8} and \emph{Knapsack\_9}, the Monte-Carlo algorithm found much better solutions than the exact method in shorter running times.

Note that this qualitative analysis for the Monte-Carlo algorithm is largely possible because we have the exact solution to compare with in most cases. In practice, if the enumeration algorithm cannot be executed, we would need to rely on the volume-errors. In this line, the results are less positive. While most instances of BOLib and CoralLib have a volume-error of $0\%$, some instances have a large error, even attaining $74\%$ in the worst one, which happens to be the best one in time (\emph{CoralLib/linderoth}). Probably, these instances have many small full-dimensional chambers, and so the volume-error estimator does not reflect how well the algorithm is performing (it largely overestimates the error).

We also note that, in theory, the Knapsack problem should have a large volume-error (since it has as many full-dimensional chambers as chamber vertices, as illustrated in Section \ref{sec:Examples}). Nevertheless, the $0\%$ error estimation we obtained tells us that most of these chambers are, in fact, small. This could be exploited by a more ad-hoc sampling technique (instead of just plain Monte-Carlo).

Moving forward, the main (and clear) challenge for this work is scalability. These results show short running times since all instances have small dimensions. The main bottleneck currently is the enumeration of the faces of a polytope. In the case of Algorithm \ref{alg:Enumeration}, there does not seem to be much hope in improving this substantially: note in Table \ref{table:mainresults} that in all but the bottom two entries\footnote{The last two entries of Table \ref{table:mainresults} correspond to cases where $\mathscr{F}_{\leq \nx}$ was not fully computed due to the time limit.}, Algorithm \ref{alg:Enumeration} used all available faces. This is because the algorithm heavily relies on \emph{maximal labels}, which is important in our procedure to not repeat chambers when enumerating. Nonetheless, we still believe Algorithm \ref{alg:Enumeration} can be useful as a baseline that has optimality guarantees. \DS{As an immediate perspective, we propose to conduct a study to understand the computational complexity of the problems presented in this work.}

Algorithm \ref{alg:MonteCarlo} could potentially be improved significantly. First of all, recall that this approach only uses the faces of dimension $\nx$ (i.e. $\mathscr{F}_{\nx}$), which can be considerably smaller than $\mathscr{F}_{\leq \nx}$ (see columns 3 and 4 of Table \ref{table:mainresults}). Therefore, a more intricate enumeration that exploits this could be devised. Additionally, and perhaps more importantly, note that in the instances where $|\mathscr{F}_{\nx}|$ is not too small (say, more than 40) Algorithm \ref{alg:MonteCarlo} only uses a fraction of $\mathscr{F}_{\nx}$ in its execution. This indicates that one could heavily restrict the faces to consider initially and generate more on-the-fly, much like in a column generation approach. Another potential improvement path is exploiting more structure of a particular family of instances (as in the knapsack case), which may indicate which are the faces that one would truly need.

\begin{landscape}
\begin{table}[t]
		\centering

\caption{\small Summary of results for Algorithms \ref{alg:Enumeration} and \ref{alg:MonteCarlo} for selected  BOLib instances \cite{zhou2020bolib},  CoralLib instances \cite{corallib} and Knapsack instances \cite{BuchheimHenkeIrmai2022Knapsack}.  The ``Size'' of the instance is $(\nx + \ny, m)$. The  ``Obj gap'' column shows the gap between the values found for both algorithms; a negative gap indicates the stochastic method performed better. The ``Error" column shows the upper estimation of the \emph{volume of unseen chambers} during the sampling process as per \eqref{eq-def:DoubleEstimator}. The columns labeled ``Computation Times'' contain the running times (in seconds) for the computation of all the faces, the execution of Algorithm \ref{alg:Enumeration} and of Algorithm \ref{alg:MonteCarlo}. The columns labeled ``Used faces'' contain the number of faces that were explicitly used during the execution of each algorithm.}\label{table:mainresults}

		\begin{adjustbox}{scale=0.84}
			\begin{tabular}{lrrrrr|rrr|rr|}
				\cline{7-11}
				& \multicolumn{1}{c}{}         & \multicolumn{1}{c}{}       & \multicolumn{1}{l}{}          & \multicolumn{1}{c}{}        & \multicolumn{1}{c|}{}      & \multicolumn{3}{c|}{Computation Times}                                                 & \multicolumn{2}{c|}{Used Faces}                           \\ \hline
				\multicolumn{1}{|l|}{Instance}                      & \multicolumn{1}{c|}{Size}    & \multicolumn{1}{c|}{$|\mathscr{F}_{\leq \nx}|$} & \multicolumn{1}{l|}{$|\mathscr{F}_{\nx}|$} & \multicolumn{1}{c|}{Obj gap}    & \multicolumn{1}{c|}{Error} & \multicolumn{1}{c|}{Faces} & \multicolumn{1}{c|}{Alg. 1} & \multicolumn{1}{c|}{Alg. 2} & \multicolumn{1}{c|}{Alg. 1} & \multicolumn{1}{c|}{Alg 2.} \\ \hline
				\multicolumn{1}{|l|}{BOLib/AnandalinghamWhite1990} & \multicolumn{1}{r|}{(2,7)}   & \multicolumn{1}{r|}{12}    & \multicolumn{1}{r|}{6}        & \multicolumn{1}{r|}{0\%}    & 0\%                        & \multicolumn{1}{r|}{1.4}   & \multicolumn{1}{r|}{3.8}    & 7.2                         & \multicolumn{1}{r|}{12}     & 5                           \\ \hline
				\multicolumn{1}{|l|}{BOLib/Bard1984a}              & \multicolumn{1}{r|}{(2,6)}   & \multicolumn{1}{r|}{10}    & \multicolumn{1}{r|}{5}        & \multicolumn{1}{r|}{0\%}    & 0\%                        & \multicolumn{1}{r|}{3.1}   & \multicolumn{1}{r|}{6.8}    & 7.9                         & \multicolumn{1}{r|}{10}     & 5                           \\ \hline
				\multicolumn{1}{|l|}{BOLib/Bard1984b}              & \multicolumn{1}{r|}{(2,6)}   & \multicolumn{1}{r|}{10}    & \multicolumn{1}{r|}{5}        & \multicolumn{1}{r|}{0\%}    & 0\%                        & \multicolumn{1}{r|}{1.4}   & \multicolumn{1}{r|}{4.1}    & 7.5                         & \multicolumn{1}{r|}{10}     & 5                           \\ \hline
				\multicolumn{1}{|l|}{BOLib/Bard1991Ex2}            & \multicolumn{1}{r|}{(3,6)}   & \multicolumn{1}{r|}{14}    & \multicolumn{1}{r|}{9}        & \multicolumn{1}{r|}{0\%}    & 0\%                        & \multicolumn{1}{r|}{1.4}   & \multicolumn{1}{r|}{4.2}    & 8.2                         & \multicolumn{1}{r|}{14}     & 6                           \\ \hline
				\multicolumn{1}{|l|}{BOLib/BardFalk1982Ex2}        & \multicolumn{1}{r|}{(4,7)}   & \multicolumn{1}{r|}{45}    & \multicolumn{1}{r|}{17}       & \multicolumn{1}{r|}{0\%}    & 45\%                       & \multicolumn{1}{r|}{1.7}   & \multicolumn{1}{r|}{4.3}    & 7.6                         & \multicolumn{1}{r|}{45}     & 5                           \\ \hline
				\multicolumn{1}{|l|}{BOLib/BenAyedBlair1990a}      & \multicolumn{1}{r|}{(3,6)}   & \multicolumn{1}{r|}{20}    & \multicolumn{1}{r|}{12}       & \multicolumn{1}{r|}{0\%}    & 0\%                        & \multicolumn{1}{r|}{1.4}   & \multicolumn{1}{r|}{4.2}    & 8.5                         & \multicolumn{1}{r|}{20}     & 4                           \\ \hline
				\multicolumn{1}{|l|}{BOLib/BenAyedBlair1990b}      & \multicolumn{1}{r|}{(2,5)}   & \multicolumn{1}{r|}{6}     & \multicolumn{1}{r|}{3}        & \multicolumn{1}{r|}{0\%}    & 0\%                        & \multicolumn{1}{r|}{1.5}   & \multicolumn{1}{r|}{4.3}    & 8.0                         & \multicolumn{1}{r|}{6}      & 3                           \\ \hline
				\multicolumn{1}{|l|}{BOLib/BialasKarwan1984a}      & \multicolumn{1}{r|}{(3,8)}   & \multicolumn{1}{r|}{20}    & \multicolumn{1}{r|}{12}       & \multicolumn{1}{r|}{0\%}    & 0\%                        & \multicolumn{1}{r|}{1.6}   & \multicolumn{1}{r|}{4.4}    & 8.4                         & \multicolumn{1}{r|}{20}     & 10                          \\ \hline
				\multicolumn{1}{|l|}{BOLib/BialasKarwan1984b}      & \multicolumn{1}{r|}{(2,7)}   & \multicolumn{1}{r|}{12}    & \multicolumn{1}{r|}{6}        & \multicolumn{1}{r|}{0\%}    & 0\%                        & \multicolumn{1}{r|}{1.5}   & \multicolumn{1}{r|}{4.3}    & 8.0                         & \multicolumn{1}{r|}{12}     & 5                           \\ \hline
				\multicolumn{1}{|l|}{BOLib/CandlerTownsley1982}    & \multicolumn{1}{r|}{(5,8)}   & \multicolumn{1}{r|}{111}   & \multicolumn{1}{r|}{48}       & \multicolumn{1}{r|}{1\%}    & 37\%                       & \multicolumn{1}{r|}{1.8}   & \multicolumn{1}{r|}{7.9}    & 18.5                        & \multicolumn{1}{r|}{111}    & 16                          \\ \hline
				\multicolumn{1}{|l|}{BOLib/ClarkWesterberg1988}    & \multicolumn{1}{r|}{(2,3)}   & \multicolumn{1}{r|}{6}     & \multicolumn{1}{r|}{3}        & \multicolumn{1}{r|}{0\%}    & 0\%                        & \multicolumn{1}{r|}{1.5}   & \multicolumn{1}{r|}{4.3}    & 8.0                         & \multicolumn{1}{r|}{6}      & 3                           \\ \hline
				\multicolumn{1}{|l|}{BOLib/ClarkWesterberg1990b}   & \multicolumn{1}{r|}{(3,7)}   & \multicolumn{1}{r|}{15}    & \multicolumn{1}{r|}{9}        & \multicolumn{1}{r|}{0\%}    & 0\%                        & \multicolumn{1}{r|}{1.5}   & \multicolumn{1}{r|}{4.4}    & 8.3                         & \multicolumn{1}{r|}{15}     & 9                           \\ \hline
				\multicolumn{1}{|l|}{BOLib/GlackinEtal2009}        & \multicolumn{1}{r|}{(3,6)}   & \multicolumn{1}{r|}{20}    & \multicolumn{1}{r|}{5}        & \multicolumn{1}{r|}{0\%}    & 48\%                       & \multicolumn{1}{r|}{1.6}   & \multicolumn{1}{r|}{4.8}    & 8.0                         & \multicolumn{1}{r|}{20}     & 3                           \\ \hline
				\multicolumn{1}{|l|}{BOLib/HaurieSavardWhite1990}  & \multicolumn{1}{r|}{(2,4)}   & \multicolumn{1}{r|}{8}     & \multicolumn{1}{r|}{4}        & \multicolumn{1}{r|}{0\%}    & 0\%                        & \multicolumn{1}{r|}{1.6}   & \multicolumn{1}{r|}{4.5}    & 8.3                         & \multicolumn{1}{r|}{8}      & 4                           \\ \hline
				\multicolumn{1}{|l|}{BOLib/HuHuangZhang2009}       & \multicolumn{1}{r|}{(3,6)}   & \multicolumn{1}{r|}{20}    & \multicolumn{1}{r|}{12}       & \multicolumn{1}{r|}{0\%}    & 0\%                        & \multicolumn{1}{r|}{1.6}   & \multicolumn{1}{r|}{4.4}    & 8.6                         & \multicolumn{1}{r|}{20}     & 7                           \\ \hline
				\multicolumn{1}{|l|}{BOLib/LanWenShihLee2007}      & \multicolumn{1}{r|}{(2,8)}   & \multicolumn{1}{r|}{14}    & \multicolumn{1}{r|}{7}        & \multicolumn{1}{r|}{0\%}    & 1\%                        & \multicolumn{1}{r|}{1.6}   & \multicolumn{1}{r|}{4.5}    & 8.2                         & \multicolumn{1}{r|}{14}     & 6                           \\ \hline
				\multicolumn{1}{|l|}{BOLib/LiuHart1994}            & \multicolumn{1}{r|}{(2,5)}   & \multicolumn{1}{r|}{10}    & \multicolumn{1}{r|}{5}        & \multicolumn{1}{r|}{0\%}    & 0\%                        & \multicolumn{1}{r|}{1.6}   & \multicolumn{1}{r|}{4.4}    & 8.6                         & \multicolumn{1}{r|}{10}     & 4                           \\ \hline
				\multicolumn{1}{|l|}{BOLib/MershaDempe2006Ex1}     & \multicolumn{1}{r|}{(2,6)}   & \multicolumn{1}{r|}{8}     & \multicolumn{1}{r|}{4}        & \multicolumn{1}{r|}{0\%}    & 0\%                        & \multicolumn{1}{r|}{1.7}   & \multicolumn{1}{r|}{4.8}    & 8.6                         & \multicolumn{1}{r|}{8}      & 4                           \\ \hline
				\multicolumn{1}{|l|}{BOLib/MershaDempe2006Ex2}     & \multicolumn{1}{r|}{(2,7)}   & \multicolumn{1}{r|}{10}    & \multicolumn{1}{r|}{5}        & \multicolumn{1}{r|}{0\%}    & 64\%                       & \multicolumn{1}{r|}{1.5}   & \multicolumn{1}{r|}{4.3}    & 6.4                         & \multicolumn{1}{r|}{10}     & 3                           \\ \hline
				\multicolumn{1}{|l|}{BOLib/TuyEtal1993}            & \multicolumn{1}{r|}{(4,7)}   & \multicolumn{1}{r|}{45}    & \multicolumn{1}{r|}{17}       & \multicolumn{1}{r|}{0\%}    & 54\%                       & \multicolumn{1}{r|}{1.7}   & \multicolumn{1}{r|}{4.5}    & 7.6                         & \multicolumn{1}{r|}{45}     & 5                           \\ \hline
				\multicolumn{1}{|l|}{BOLib/TuyEtal1994}            & \multicolumn{1}{r|}{(4,8)}   & \multicolumn{1}{r|}{72}    & \multicolumn{1}{r|}{24}       & \multicolumn{1}{r|}{0\%}    & 46\%                       & \multicolumn{1}{r|}{1.6}   & \multicolumn{1}{r|}{4.5}    & 7.5                         & \multicolumn{1}{r|}{72}     & 6                           \\ \hline
				\multicolumn{1}{|l|}{BOLib/VisweswaranEtal1996}    & \multicolumn{1}{r|}{(2,6)}   & \multicolumn{1}{r|}{8}     & \multicolumn{1}{r|}{4}        & \multicolumn{1}{r|}{0\%}    & 0\%                        & \multicolumn{1}{r|}{1.4}   & \multicolumn{1}{r|}{4.3}    & 7.9                         & \multicolumn{1}{r|}{8}      & 4                           \\ \hline
				\multicolumn{1}{|l|}{BOLib/WangJiaoLi2005}         & \multicolumn{1}{r|}{(3,7)}   & \multicolumn{1}{r|}{23}    & \multicolumn{1}{r|}{14}       & \multicolumn{1}{r|}{0\%}    & 0\%                        & \multicolumn{1}{r|}{1.5}   & \multicolumn{1}{r|}{4.4}    & 8.6                         & \multicolumn{1}{r|}{23}     & 5                           \\ \hline \hline
				\multicolumn{1}{|l|}{CoralLib/linderoth}          & \multicolumn{1}{r|}{(6,15)}  & \multicolumn{1}{r|}{545}   & \multicolumn{1}{r|}{51}       & \multicolumn{1}{r|}{0\%}    & 74\%                       & \multicolumn{1}{r|}{1.4}   & \multicolumn{1}{r|}{148.0}  & 7.4                         & \multicolumn{1}{r|}{545}    & 7                           \\ \hline
				\multicolumn{1}{|l|}{CoralLib/moore90\_2}         & \multicolumn{1}{r|}{(2,7)}   & \multicolumn{1}{r|}{12}    & \multicolumn{1}{r|}{6}        & \multicolumn{1}{r|}{0\%}    & 0\%                        & \multicolumn{1}{r|}{1.4}   & \multicolumn{1}{r|}{4.0}    & 7.6                         & \multicolumn{1}{r|}{12}     & 5                           \\ \hline
				\multicolumn{1}{|l|}{CoralLib/moore90}            & \multicolumn{1}{r|}{(2,8)}   & \multicolumn{1}{r|}{8}     & \multicolumn{1}{r|}{4}        & \multicolumn{1}{r|}{0\%}    & 0\%                        & \multicolumn{1}{r|}{1.6}   & \multicolumn{1}{r|}{4.4}    & 8.1                         & \multicolumn{1}{r|}{8}      & 4                           \\ \hline \hline
				\multicolumn{1}{|l|}{Knapsack\_6}                     & \multicolumn{1}{r|}{(7,15)}  & \multicolumn{1}{r|}{574}   & \multicolumn{1}{r|}{447}      & \multicolumn{1}{r|}{0\%}    & 0\%                        & \multicolumn{1}{r|}{3.2}   & \multicolumn{1}{r|}{117.1}  & 19.2                        & \multicolumn{1}{r|}{574}    & 255                         \\ \hline
				\multicolumn{1}{|l|}{Knapsack\_7}                     & \multicolumn{1}{r|}{(8,17)}  & \multicolumn{1}{r|}{1278}  & \multicolumn{1}{r|}{1023}     & \multicolumn{1}{r|}{0\%}    & 0\%                        & \multicolumn{1}{r|}{2.5}   & \multicolumn{1}{r|}{626.9}  & 38.5                        & \multicolumn{1}{r|}{1278}   & 575                         \\ \hline
				\multicolumn{1}{|l|}{Knapsack\_8}                     & \multicolumn{1}{r|}{(9,19)}  & \multicolumn{1}{r|}{2814}  & \multicolumn{1}{r|}{2303}     & \multicolumn{1}{r|}{-14\%}  & 0\%                        & \multicolumn{1}{r|}{4.9}   & \multicolumn{1}{r|}{914.7}  & 82.4                        & \multicolumn{1}{r|}{1990}   & 1279                        \\ \hline
				\multicolumn{1}{|l|}{Knapsack\_9}                     & \multicolumn{1}{r|}{(10,21)} & \multicolumn{1}{r|}{6142}  & \multicolumn{1}{r|}{5119}     & \multicolumn{1}{r|}{-380\%} & 0\%                        & \multicolumn{1}{r|}{15.4}  & \multicolumn{1}{r|}{984.2}  & 181.4                       & \multicolumn{1}{r|}{3346}   & 2815                        \\ \hline
			\end{tabular}
		\end{adjustbox}
  \end{table}
\end{landscape}


%

\section*{Statements and Declaration.}

\noindent{\bf Funding.} G. Mu\~noz was supported by FONDECYT Iniciaci\'on 11190515 (ANID-Chile). D. Salas was supported by the Center of Mathematical Modeling (CMM) FB210005 BASAL funds for centers of excellence (ANID-Chile), and the grant FONDECYT Iniciaci\'on 11220586 (ANID-Chile). A. Svensson was supported by the grant FONDECYT postdoctorado 3210735 (ANID-Chile).
%

\bibliographystyle{plain}
\bibliography{Biblio3.bib}

\begin{thebibliography}{10}

\bibitem{corallib}
Coral bilevel optimization problem library.
\newblock \url{https://coral.ise.lehigh.edu/data-sets/bilevel-instances/}.
\newblock Accessed: 2022-11-3.

\bibitem{AudetHaddadSavard2006}
Charles Audet, Jean Haddad, and Gilles Savard.
\newblock A note on the definition of a linear bilevel programming solution.
\newblock {\em Appl. Math. Comput.}, 181(1):351--355, 2006.

\bibitem{beck2023survey}
Yasmine Beck, Ivana Ljubi{\'c}, and Martin Schmidt.
\newblock A survey on bilevel optimization under uncertainty.
\newblock {\em European Journal of Operational Research}, 2023.

\bibitem{bezanson2017julia}
Jeff Bezanson, Alan Edelman, Stefan Karpinski, and Viral~B Shah.
\newblock Julia: A fresh approach to numerical computing.
\newblock {\em SIAM review}, 59(1):65--98, 2017.

\bibitem{BuchheimHenkeIrmai2022Knapsack}
C.~Buchheim, D.~Henke, and J.~Irmai.
\newblock The stochastic bilevel continuous knapsack problem with uncertain
  follower’s objective.
\newblock {\em J Optim Theory Appl}, 194:521--542, 2022.

\bibitem{buchheim2023bilevel}
Christoph Buchheim.
\newblock Bilevel linear optimization belongs to {NP} and admits
  polynomial-size {KKT}-based reformulations.
\newblock {\em Operations Research Letters}, 51(6):618--622, 2023.

\bibitem{buchheim2022robust}
Christoph Buchheim and Dorothee Henke.
\newblock The robust bilevel continuous knapsack problem with uncertain
  coefficients in the follower’s objective.
\newblock {\em Journal of Global Optimization}, 83(4):803--824, 2022.

\bibitem{buchheim2021complexity}
Christoph Buchheim, Dorothee Henke, and Felix Hommelsheim.
\newblock On the complexity of robust bilevel optimization with uncertain
  follower's objective.
\newblock {\em Operations Research Letters}, 49(5):703--707, 2021.

\bibitem{BurtscheidtClaus2020BilevelLinear}
Johanna Burtscheidt and Matthias Claus.
\newblock Bilevel linear optimization under uncertainty.
\newblock In {\em Bilevel optimization---advances and next challenges}, volume
  161 of {\em Springer Optim. Appl.}, pages 485--511. Springer, Cham, [2020]
  \copyright 2020.

\bibitem{Claus2021Continuity}
Matthias Claus.
\newblock On continuity in risk-averse bilevel stochastic linear programming
  with random lower level objective function.
\newblock {\em Oper. Res. Lett.}, 49(3):412--417, 2021.

\bibitem{Claus2022Existence}
Matthias Claus.
\newblock Existence of solutions for a class of bilevel stochastic linear
  programs.
\newblock {\em European J. Oper. Res.}, 299(2):542--549, 2022.

\bibitem{de2010triangulations}
Jes{\'u}s De~Loera, J{\"o}rg Rambau, and Francisco Santos.
\newblock {\em Triangulations: structures for algorithms and applications},
  volume~25.
\newblock Springer Science \& Business Media, 2010.

\bibitem{dempe2002foundations}
S.~Dempe.
\newblock {\em Foundations of bilevel programming}.
\newblock Springer Science \& Business Media, 2002.

\bibitem{Dempe2015Bilevel}
Stephan Dempe, Vyacheslav Kalashnikov, Gerardo~A. P\'{e}rez-Vald\'{e}s, and
  Nataliya Kalashnykova.
\newblock {\em Bilevel programming problems}.
\newblock Energy Systems. Springer, Heidelberg, 2015.
\newblock Theory, algorithms and applications to energy networks.

\bibitem{DempeZemkoho2020Advances}
Stephan Dempe and Alain Zemkoho, editors.
\newblock {\em Bilevel optimization---advances and next challenges}, volume 161
  of {\em Springer Optimization and Its Applications}.
\newblock Springer, Cham, [2020] \copyright 2020.

\bibitem{forcier2022thesis}
M.~Forcier.
\newblock {\em Multistage stochastic optimization and polyhedral geometry}.
\newblock PhD thesis, \'Ecole de Ponts - ParisTech, 2022.

\bibitem{forcier2020polyhedral}
Ma{\"e}l Forcier, Stephane Gaubert, and Vincent Lecl{\`e}re.
\newblock Exact quantization of multistage stochastic linear problems.
\newblock {\em SIAM J. Optim.}, 34(1):533--562, 2024.

\bibitem{polymake:2000}
Ewgenij Gawrilow and Michael Joswig.
\newblock {\tt polymake}: a framework for analyzing convex polytopes.
\newblock In {\em Polytopes---combinatorics and computation ({O}berwolfach,
  1997)}, volume~29 of {\em DMV Sem.}, pages 43--73. Birkh\"auser, Basel, 2000.

\bibitem{gurobi}
{Gurobi Optimization, LLC}.
\newblock {Gurobi Optimizer Reference Manual}, 2022.

\bibitem{hansen1992new}
Pierre Hansen, Brigitte Jaumard, and Gilles Savard.
\newblock New branch-and-bound rules for linear bilevel programming.
\newblock {\em SIAM Journal on scientific and Statistical Computing},
  13(5):1194--1217, 1992.

\bibitem{henke2024coupling}
Dorothee Henke, Henri Lefebvre, Martin Schmidt, and Johannes Thürauf.
\newblock On coupling constraints in linear bilevel optimization, 2024.
\newblock Preprint - arXiv:2402.12191.

\bibitem{HomemdeMelloBayraksan2014Survey}
Tito Homem-de Mello and G\"{u}zin Bayraksan.
\newblock Monte {C}arlo sampling-based methods for stochastic optimization.
\newblock {\em Surv. Oper. Res. Manag. Sci.}, 19(1):56--85, 2014.

\bibitem{Ivanov2018RandomParameters}
S.~V. Ivanov.
\newblock A bilevel programming problem with random parameters in the
  follower's objective function.
\newblock {\em Diskretn. Anal. Issled. Oper.}, 25(4):27--45, 2018.

\bibitem{khachiyan2009generating}
Leonid Khachiyan, Endre Boros, Konrad Borys, Vladimir Gurvich, and Khaled
  Elbassioni.
\newblock Generating all vertices of a polyhedron is hard.
\newblock In {\em Twentieth Anniversary Volume:}, pages 1--17. Springer, 2009.

\bibitem{kleinert2021survey}
Thomas Kleinert, Martine Labb{\'e}, Ivana Ljubi{\'c}, and Martin Schmidt.
\newblock A survey on mixed-integer programming techniques in bilevel
  optimization.
\newblock {\em EURO Journal on Computational Optimization}, 9:100007, 2021.

\bibitem{klenke2013probability}
Achim Klenke.
\newblock {\em Probability Theory: A Comprehensive Course}.
\newblock Springer, 2014.

\bibitem{Leobacher2014introduction}
Gunther Leobacher and Friedrich Pillichshammer.
\newblock {\em Introduction to quasi-{M}onte {C}arlo integration and
  applications}.
\newblock Compact Textbooks in Mathematics. Birkh\"{a}user/Springer, Cham,
  2014.

\bibitem{MallozziMorgan1996}
L.~Mallozzi and J.~Morgan.
\newblock {\em Hierarchical Systems with Weighted Reaction Set}, pages
  271--282.
\newblock Springer US, Boston, MA, 1996.

\bibitem{MershaDempe2006}
Ayalew~Getachew Mersha and Stephan Dempe.
\newblock Linear bilevel programming with upper level constraints depending on
  the lower level solution.
\newblock {\em Appl. Math. Comput.}, 180(1):247--254, 2006.

\bibitem{MSS2023exploiting}
Gonzalo Mu{\~n}oz, David Salas, and Anton Svensson.
\newblock Exploiting the polyhedral geometry of stochastic linear bilevel
  programming.
\newblock In {\em International Conference on Integer Programming and
  Combinatorial Optimization}, pages 363--377. Springer International
  Publishing Cham, 2023.

\bibitem{rambau1996projections}
J{\"o}rg Rambau and G{\"u}nter~M Ziegler.
\newblock Projections of polytopes and the generalized baues conjecture.
\newblock {\em Discrete \& Computational Geometry}, 16(3):215--237, 1996.

\bibitem{SalasSvensson2020Existence}
David Salas and Anton Svensson.
\newblock Existence of solutions for deterministic bilevel games under a
  general {B}ayesian approach.
\newblock {\em SIAM J. Optim.}, 33(3):2311--2340, 2023.

\bibitem{Shapiro2021Lecture3rd}
Alexander Shapiro, Darinka Dentcheva, and Andrzej Ruszczy\'{n}ski.
\newblock {\em Lectures on stochastic programming---modeling and theory},
  volume~28 of {\em MOS-SIAM Series on Optimization}.
\newblock Society for Industrial and Applied Mathematics (SIAM), Philadelphia,
  PA; Mathematical Optimization Society, Philadelphia, PA, [2021] \copyright
  2021.
\newblock Third edition [of 2562798].

\bibitem{vicente1994descent}
Luis Vicente, Gilles Savard, and Joaquim J{\'u}dice.
\newblock Descent approaches for quadratic bilevel programming.
\newblock {\em Journal of optimization theory and applications},
  81(2):379--399, 1994.

\bibitem{Stackelberg1934market}
H.~Von~Stackelberg.
\newblock {\em Marktform und Gleichgewitch}.
\newblock Springer, 1934.

\bibitem{zhou2020bolib}
Shenglong Zhou, Alain~B Zemkoho, and Andrey Tin.
\newblock Bolib: Bilevel optimization library of test problems.
\newblock In {\em Bilevel Optimization}, pages 563--580. Springer, 2020.

\end{thebibliography}

\end{document}